\documentclass[leqno, 11pt, a4paper]{amsart}
\usepackage{amsmath}
\usepackage{amssymb, latexsym}

 \newtheorem{definition}{Definition}[section]
 \newtheorem{theorem}[definition]{Theorem}
 \newtheorem{lemma}[definition]{Lemma}
 \newtheorem{proposition}[definition]{Proposition}
 \newtheorem{corollary}[definition]{Corollary}

 \usepackage[cmtip,all]{xy}
 \newtheorem*{theorem*}{Theorem}
\newtheorem*{proposition*}{Proposition}
\newtheorem*{lemma*}{Lemma}

 \theoremstyle{remark}
 
 \newtheorem{remark}[definition]{Remark}

\newcommand{\op}[1]{\operatorname{#1}}



\newcommand{\Tr}{\ensuremath{\op{Tr}}}
\newcommand{\tr}{\op{tr}}

\def\XXint#1#2#3{{\setbox0=\hbox{$#1{#2#3}{\int}$}
\vcenter{\hbox{$#2#3$}}\kern-.5\wd0}}

\newcommand{\ind}{\op{ind}}

\newcommand{\Td}{\op{Td}}
\newcommand{\Ind}{\op{Ind}}


\newcommand{\C}{\ensuremath{\mathbb{C}}} 
 
\newcommand{\N}{\ensuremath{\mathbb{N}}} 
\newcommand{\Q}{\ensuremath{\mathbb{Q}}} 
\newcommand{\R}{\ensuremath{\mathbb{R}}} 
 
\newcommand{\Z}{\ensuremath{\mathbb{Z}}}


\newcommand{\Ca}[1]{\ensuremath{\mathcal{#1}}}
\newcommand{\cA}{\Ca{A}}
\newcommand{\cB}{\Ca{B}}

\newcommand{\cE}{\Ca{E}}

\newcommand{\cG}{\ensuremath{\mathcal{G}}}
\newcommand{\cH}{\ensuremath{\mathcal{H}}}

\newcommand{\cK}{\ensuremath{\mathcal{K}}}

\newcommand{\cN}{\Ca{N}}
\newcommand{\cO}{\Ca{O}}
\newcommand{\cP}{\Ca{P}}

\newcommand{\cS}{\ensuremath{\mathcal{S}}}
\newcommand{\cT}{\ensuremath{\mathcal{T}}}

\newcommand{\cW}{\ensuremath{\mathcal{W}}}

\newcommand{\Diff}{{\text{Diff}}}
\newcommand{\Cliff}{{\text{Cliff}}}



\newcommand{\pdo}{\ensuremath{\Psi}}


\newcommand{\disp}{\displaystyle}

\newcommand{\vol}{\op{vol}}

\newcommand{\End}{\ensuremath{\op{End}}}

\newcommand{\Irr}{\ensuremath{\op{Irr}}}

\newcommand{\ch}{\op{ch}}

\newcommand{\f}{\mathfrak}

\newcommand{\ba}{\begin{eqnarray}}
   \newcommand{\na}{\end{eqnarray}}

\numberwithin{equation}{section}

  \def\cancel#1#2{\ooalign{$\hfil#1\mkern1mu/\hfil$\crcr$#1#2$}}
\def\Dirac{\mathpalette\cancel D}

\begin{document}

 \title[Localized Index  and Lefschetz fixed formula for   orbifolds]{Localized Index  and $L^2$-Lefschetz Fixed Point Formula for Orbifolds}
 \author{Bai-Ling Wang}
  \address{Department of Mathematics\\
  Australian National University\\
  Canberra ACT 0200 \\
  Australia}
  \email{bai-ling.wang@anu.edu.au}
 \author{Hang Wang}
 \address{Mathematical Sciences Center, Tsinghua University, Beijing 100084, China}
 \address{(Current) School of Mathematical Sciences\\ University of Adelaide, Adelaide SA 5005 Australia}
 \email{hang.wang01@adelaide.edu.au, hwang@math.tsinghua.edu.cn}
\maketitle 

\begin{abstract}
We study a class of localized indices for  the Dirac type operators on a complete Riemannian orbifold, where a discrete group acts properly, co-compactly and isometrically.
These localized indices, generalizing the $L^2$-index of Atiyah, are obtained by taking  certain  traces of the higher index for the Dirac type operators along   conjugacy classes of the discrete group.  Applying the local index technique, we also obtain an $L^2$-version of  the  Lefschetz fixed point formula for  orbifolds.  These cohomological formulae for the localized indices give rise to a class of refined topological invariants for the quotient orbifold.  
\end{abstract}

   \tableofcontents

\section{Introduction}

 The  Lefschetz fixed point formula    calculates the supertrace of the action of  a diffeomorphism on a closed Riemannian  manifold with isolated fixed points.  This was generalized by Atiyah and Bott in \cite{AB} to an elliptic complex with a geometric endomorphism  defined by a 
 diffeomorphism  with isolated fixed points.  When the diffeomorphism  $\gamma$ comes from a {\em compact}  group $H$  of orientation preserving isometries  of a {\em compact}  even dimensional  manifold $X$, the fixed point formulae are special case of the equivariant index formulae for the  equivariant index 
 \ba\label{gamma:index}
 \ind_H(\gamma,  \Dirac) = \Tr(\gamma |_{\ker \Dirac^+} ) - 
 \Tr(\gamma |_{\ker \Dirac^+} )
 \na
assocaited to an $H$-invariant Dirac operator $\Dirac$  on an equivariant Clifford module $\cE$.  The local index formula, also called the   Lefschetz fixed point formula, 
 \ba\label{Lefs}
  \ind_H(\gamma,  \Dirac)  = \int_{X^\gamma} \hat{A}_\gamma(X) \ch_\gamma(\cE/\cS)
  \na
    is  obtained by an asymptotic expansion of the equivariant heat kernel to the operator
$\gamma e^{-t\Dirac^2}$.  Here $X^\gamma$ is the fixed point set of the $\gamma$-action,  consisting of closed submanifolds of $X$, and    $\hat{A}_\gamma(X) \ch_\gamma(\cE/\cS)$ represents the local index density for the  equivariant index.  See {\cite[Chapter 6]{BGV}} and \cite{PW:NCGCGI.PartII}   for a detailed  account of these developments.   When $X$ is not compact, the equivariant index  is not defined and there does not exist any Lefschetz fixed point formula.  We remark that the Lefschetz fixed point formulae  for compact orbifolds have been established   in \cite{Dui, FX, EEK}.

To motivate our study, let us consider a compact  even dimensional  Riemannian  manifold $M$ with an isometric action of a \emph{finite} group $H$. Let $\Dirac$ be an $H$-invariant Dirac operator on an equivariant Clifford module $\cE$. Then $\Dirac$ 
defines a Dirac operator  $\Dirac_{\f{X}}$
on the quotient orbifold $\f{X} = H\backslash M$.   An orbifold locally looks like Euclidean space equipped with a finite group action, and is a basic geometric model used in many areas such as mathematical physics, algebraic geometry, representation theory and number theory. Index theory of Dirac type operators on orbifolds is an operator algebraic approach in detecting the topological, geometrical, arithmetic information concerning the orbifolds.  

 The Kawasaki orbifold  index for $\Dirac_{\f{X}}$ in \cite{Kawasaki:1981} for the finite quotient orbifold $\f{X} = H\backslash M $ can be obtained by applying the 
 Lefschetz fixed point formula (the local equivariant index formula) for  $\ind_H (\gamma, \Dirac)$ to  the following identity
\ba\label{index:finite}
\ind \Dirac_{\f{X}} =  \dfrac {1}{|H| } \sum_{\gamma \in H}  \ind_H (\gamma, \Dirac)  =  \sum_{(\gamma)}    \dfrac {1}{|Z_H(\gamma)| }   \ind_H (\gamma , \Dirac). 
\na
Here, the second summand  is taken over  representatives for the conjugacy classes in $H$.  
Applying the Lefschetz fixed point formula (\ref{Lefs}), this is exactly the 
 Kawasaki orbifold  index for the Dirac operator  $\Dirac_{\f{X}}$ on the orbifold $\f{X}$.   
From the view point of topology, the formula~(\ref{index:finite}) for $\Dirac_{\f{X}}$ follows from its analogue of calculating the equivariant $K$-theory of the compact proper $H$-manifold $M$ using the extended quotient $\sqcup_{(\gamma)} Z_H(\gamma)\backslash M^\gamma$ indexed by the conjugacy classes of the finite group $H$ (See~\cite{BC:88, EE}):
\begin{equation}
\label{eq:K-theoryProperGspaceCompFin}
 K_H^*(M)\otimes\C\cong \oplus_{(\gamma)}K^*(Z_H(\gamma)\backslash M^\gamma)\otimes\C.
\end{equation}
 
In this paper, we are interested in the index problems for a Dirac type operator on  an orbifold that is not necessarily compact. 
Namely,  we consider a complete 
even dimensional Riemannian   orbifold $\f{X}$ where a discrete group $G$ acts properly, co-compactly and isometrically. 
Let $\Dirac^{\cE}$ be a $G$-invariant Dirac type operator on $\f{X}$ acting on the $L^2$-sections of a Dirac bundle $\cE$ ({\em cf.}~Definition~\ref{def:DiracBundle})
and let $\Dirac^{G\backslash \cE}$ be the Dirac operator $\Dirac^{\cE}$ passed from $\f{X}$ to the compact quotient orbifold $G\backslash \f{X}$. 
We study  the index theory  of $\Dirac^{\cE}$ as well as its relation to $\Dirac^{G\backslash \cE}$, based on the following facts when $\f{X}$ is a manifold.  

\begin{enumerate}
\item The $G$-invariant elliptic operator $\Dirac^{\cE}$ defines a cycle in the $K$-homology group of $C_0(\f{X})$ and is ``Fredholm" with respect to the $C^*$-algebra $C^*(G)$ of the discrete group $G$. In other words, $\Dirac^{\cE}$ has a higher index in the $K$-theory of the maximal group $C^*$-algebra of $G$ (See, for example~\cite{Kasparov:1983}): 
\begin{equation}
\label{eq:HigherIndexManifoldIntro}
\mu: K^0_G(C_0(\f{X}))\longrightarrow K_0(C^*(G))\qquad [\Dirac^{\cE}]\mapsto \mu[\Dirac^{\cE}].
\end{equation}

\item The Dirac operator $\Dirac^{G\backslash \cE}$ on the quotient orbifold $G\backslash \f{X}$ has a Fredholm index according to the work of Kawasaki~\cite{Kawasaki:1981} and is related to the higher index $\mu[\Dirac^{\cE}]$ of $\Dirac^{\cE}$ 
by composing the homomorphism $\rho: C^*(G)\rightarrow\C$, given by the trivial representation of $G$, on the $K$-theory level (See~\cite{Bunke:2007}):
\begin{equation*}
\ind \Dirac^{G\backslash \cE}=\rho_*(\mu[\Dirac^{\cE}])\in\Z.
\end{equation*} 

\item The $L^2$-index  is an important topological invariant for a proper co-compact $G$-manifold. It was initially defined by Atiyah in \cite{Atiyah:1976} for a manifold admitting a free co-compact action of a discrete group. The $L^2$-index for $\Dirac^{\cE}$ is a numerical index measuring the size of the space of $L^2$-solutions for $\Dirac^{\cE}$.  
In~\cite{W:2012}, we showed that the $L^2$-index of $\Dirac^{\cE}$ follows from taking the von Neumann trace of the higher index~(\ref{eq:HigherIndexManifoldIntro}):
\begin{equation}
\label{eq:L^2-indexIntro}
L^2\text{-}\ind \Dirac^{\cE}=\tau_*(\mu[\Dirac^{\cE}])\in\R
\end{equation}
where $\tau_*: K_0(C^*(G))\rightarrow\R$ is induced by the trace map 
\begin{equation*}
\tau: C^*(G)\longrightarrow \C\qquad \sum_{g\in G}{\alpha_g g}\mapsto\alpha_e.
\end{equation*}
Moreover, by the $L^2$-index formula derived in \cite{W:2012}, the $L^2$-index of $\Dirac^{\cE}$ is the top stratum of the Kawasaki index formula for $\Dirac^{G\backslash \cE}.$
\end{enumerate}

The paper is devoted to answering the following concrete questions for the index theory for $\Dirac^{\cE}$:
\begin{enumerate}
\item How do we formulate the higher index and $L^2$-index in the setting when $\f{X}$ is a non-compact orbifold?
\item How is the orbifold index $\ind \Dirac^{G\backslash \cE}$ related to the $L^2$-index for $\Dirac^{\cE}$?
\item Is there any  Lefschetz fixed point formula  for a \emph{non-compact} orbifold with a transformation preserving all the geometric data for the Dirac type operator?
\end{enumerate}

The main results of the paper are then summarized as follows.

First of all, we have a positive answer to the first question, that is, the indices $\mu[\Dirac^{\cE}]$, $\ind \Dirac^{G\backslash \cE}$ and $L^2$-$\ind \Dirac^{\cE}$ can be formulated when $\f{X}$ is a complete Riemannian  orbifold admitting a proper, co-compact and isometric action of a discrete group $G$. Namely, we construct in~(\ref{eq:K-theoreticIndex}) a higher index map:
\begin{equation*}
\mu: K^0_G(C_{red}^*(\f{X}))\longrightarrow K_0(C^*(G))\qquad [\Dirac^{\cE}]\mapsto \mu[\Dirac^{\cE}],
\end{equation*}
where the Dirac operator $\Dirac^{\cE}$ on the $G$-orbifold $\f{X}$ gives rise to an equivariant $K$-homology class of the reduced $C^*$-algebra $C_{red}^*(\f{X})$, see Lemma~\ref{le:KHomologyElement}.
The orbifold index $\ind \Dirac^{G\backslash \cE}$ of the quotient $G\backslash\f{X}$ is related to the higher index by the trivial representation of $G$, see Theorem~\ref{Thm:TrivialRepK-theoreticIndex=OrbifoldIndex}.

Secondly, in searching for a relationship between the orbifold index of $\Dirac^{G\backslash \cE}$ and the $L^2$-index of $\ind \Dirac^{\cE}$, we find it is necessary to introduce the notion of localized indices for $\Dirac^{\cE}$, which is a major novelty of our paper. 
See Definition~\ref{DefEquiInd}. For each conjugacy class $(g)$ of $g$ in $G$, the localized $(g)$-index of $\Dirac^{\cE}$, denoted by $\ind_{(g)}(\Dirac^{\cE})$, is defined as the pairing between the Banach algebra version higher index $\mu_{\cS(G)}[\Dirac^{\cE}]$ (\emph{cf.} Definition~\ref{def:BanachVHigherIndex}) and a localized $(g)$-trace $\tau^{(g)}$, which is a cyclic cocycle of degree $0$  given  by the sum of the coefficients over elements in  the conjugacy class $(g)$ for an element in  
the group algebra $\C G$  (\emph{cf.} Definition~\ref{def:Localized(g)Trace}).
It then follows from Theorem~\ref{Thm:OrbifoldIndex=SumLocalizedIndex} that the higher index $\mu_{\cS(G)}[\Dirac^{\cE}]$ is linked to the numerical indices $\ind \Dirac^{G\backslash \cE}$ and $L^2$-$\ind \Dirac^{\cE}$ by the localized indices as follows:  
\begin{enumerate}
\item The localized indices factor through the Banach algebra version higher index by definition: 
\begin{equation}
\label{eq:LocIndIntro}
\ind_{(g)}(\Dirac^{\cE})=\tau^{(g)}_*(\mu_{\cS(G)}[\Dirac^{\cE}]);
\end{equation}
In particular, the nonvanishing of the localized $(g)$-index implies the existence of $L^2$-solutions invariant under the action of $g\in G.$
\item The localized $(e)$-index generalizes the $L^2$-index and factors through the {\em usual} higher index $\mu[\Dirac^{\cE}]$:
\begin{equation*}
L^2\text{-}\ind \Dirac^{\cE}=\ind_{(e)}(\Dirac^{\cE})=\tau^{(e)}_*(\mu[\Dirac^{\cE}]);
\end{equation*}

\item The orbifold index for $\Dirac^{G\backslash \cE}$ on $G\backslash \f{X}$ is the sum of localized indices for $\Dirac^{\cE}$ over all conjugacy classes: 
\begin{equation}
\label{index:infinite}
\ind \Dirac^{G\backslash \cE}=\sum_{(g)\in G}\ind_{(g)}(\Dirac^{\cE}).
\end{equation}
\end{enumerate} 

On the  one hand, localized indices~(\ref{eq:LocIndIntro}) give rise to a $K$-theoretic interpretation of each component of the orbifold index for $\Dirac^{G\backslash\f{X}}$ on the quotient orbifold $G\backslash\f{X}$. Hence, as an analogue of~(\ref{eq:K-theoryProperGspaceCompFin}) in our more general setting, localized indices produce finer topological invariants on $G\backslash\f{X}$. 
On the other hand, comparing~(\ref{index:infinite}) with~(\ref{index:finite}), localized index  should be thought as a replacement of  the equivariant index (\ref{gamma:index})   for the  Lefschetz fixed point formula in the compact situation. This  motivated us  to find a cohomological formula for $\ind_{(g)}(\Dirac^{\cE})$, see Theorem~\ref{thm:MainTheorem}.

Thirdly,  combining the techniques of $KK$-theory and heat-kernel method, we explicitly compute
the $(g)$-index as the orbifold integration of the local index density over $\f{X}_{(g)}$, which is an orbifold of type $Z_G(g)\backslash \f{X}^g$ (the fixed point sub-orbfiold).  The formulae are presented in Theorem~\ref{thm:MainTheorem}
and they are regarded as an analogue of the ``Lefschetz fixed point formulae" for the isometries given by the action of the discrete group $G$ on the complete Riemannian orbifold $\f{X}.$
There are two nontrivial technical points in deriving the cohomological formulae for the localized indices.
The first one is Theorem~\ref{thm:HeatKernelAsympMain}, which provides  an  asymptotic expansion of the heat kernel $K_t(x,y)$ of $\Dirac^{\cE}$ and the uniform convergence of $\sum_{g\in G}K_t(x,gx)g$ over a relative compact subset of $\f{X}$ which have nontrivial intersection with each orbit for the $G$-action. The second one is Lemma~\ref{lem:VeryImportantLemma}. The lemma relates two notions of traces in topological and analytic settings, namely, the continuous trace defined on a certain completion of the group algebra $\C G$ and some supertrace of the heat kernel. See Definition~\ref{def:Localized(g)Trace} and Definition~\ref{def:(g)trace} for the precise descriptions.

Finally, we provide  some observations and possible applications for the localized indices and their local index formulae:
\begin{enumerate}
\item When $\f{X}$ is replaced by a manifold $M$, we derive the $L^2$-version of the Lefschtez fixed  point formula for a complete Riemannian manifold $M$ with a proper co-compact action of a discrete group $G$, by introducing an average function, called cut-off function, with respect to some group action  on the fixed point sub-manifold  (See Theorem~\ref{thm:LocalizedIndexKawasakiIndex}). This result extends the $L^2$-index formulae in~\cite{Atiyah:1976, W:2012} from the top stratum of $G\backslash M$ to the lower strata.

\item Let $\f{X}=G/K$ be a Riemannian symmetric manifold of noncompact type, where $G$ is a real semi-simple Lie group and $K$ is a maximal compact subgroup. Let $\Gamma$ be a discrete co-compact  subgroup of $G$. The  cohomological formulae for the localized indices for $\Dirac^{\cE}$ give rise to the orbital integrals in the Selberg trace formula. See Theorem~\ref{thm:SelbergTraceFormula}. This means that our localized indices in this special case are related to the multiplicity of unitary representations of the Lie group $G.$ 

\item The nonvanishing of the higher index for $\Dirac^{\cE}$ is useful in solving topological and geometric problems for the orbifold $\f{X}$. Explicit formulae for the localized indices, which are derived from the higher index, provide a tool to tackle these problems for $\f{X}.$ Theorem~\ref{thm:PositiveScalarCurvature} provides an example of how the non-vanishing of the localized indices derive the non-existance of positive scalar curvature for complete spin  orbifolds.
\end{enumerate}

The paper is organized as follows: 
In Section~\ref{sec:A review of orbifolds and orbifold index theorem} we review basic definitions concerning orbifolds and the Kawasaki orbifold index theorem. 
In Section~\ref{sec:EllipticPDO} we give the definition of invariant elliptic operators on a proper co-compact $G$-orbifold and use heat kernel method to calculate the localized supertraces of the heat kernel $e^{-t(\Dirac^{\cE})^2}$ of the Dirac operator $\Dirac^{\cE}.$
In Section~\ref{sec:K-theoretic index} we formulate the higher index map for $\Dirac^{\cE}$ and show that the Fredholm index for $\Dirac^{G\backslash \cE}$ factors through this higher index map.
In Section~\ref{sec:Localized indices} we introduce the localized indices and present their various connections to the other indices. We also use the analytical results from Section~\ref{sec:EllipticPDO} to derive the cohomological index formulae for the localized indices which correspond to the $L^2$-version of the Lefschtez fixed point formulae for a non-compact orbifold. 
In Section~\ref{sec:Applications and Further Remarks} we show several interpretations and remarks of the localized indices.

\vskip .2in
\noindent
{\bf Acknowledgments}
  This work is  supported by   the Australian Research Council's Discovery Projects funding scheme (DP1092682).  We would like to express our appreciation  to Alan Carey,   Xiaonan Ma, Mathai Varghese,  Guoliang Yu and Weiping Zhang for numerous inspirational discussions. HW  also likes to thank Mathematical Sciences Institute at Australian National University and Shanghai Center of Mathematical Sciences  at Fudan for
their hospitalities when part of work was carried out during her visits.

\section{A Review of Orbifolds and Orbifold Index Theorem}
\label{sec:A review of orbifolds and orbifold index theorem}

In this section, we give a preliminary review of  orbifolds in terms of orbifold atlas and orbifold groupoid,  and Kawasaki's orbifold index theorem.  Some of basic references are
\cite{Thurston:1997}, \cite{Kawasaki:1981} and    \cite{ALR}. Chapter 2 of  \cite{KL} contains a nice account for differential and Riemannian geometry of orbifold. Due  to  the  subtle nature of  orbifolds and the fact that there is not any universal agreement in describing orbifolds, we try   to give a sufficiently self-contained review as best as we can.

\subsection{Orbifolds and   discrete  group actions}  

Recall that an $n$-dimensional   orbifold $\f{X}$ is a paracompact Hausdorff  space $|\f{X}|$ equipped with an equivalent class of  orbifold atlases.  Here  we recall the definition of an  orbifold atlas.

\begin{definition} {\bf (Orbifold and local groups)}   Let $|\f{X}|$ be  a  paracompact Hausdorff  space.  An orbifold atlas on $|\f{X}|$ is a coherent  system of  orbifold charts $\cO= \{(\tilde U_i, H_i, \pi_i)\}$ 
such that
\begin{enumerate}
\item  $\{U_i\}$ is an open cover  of $|\f{X}|$ which is closed under finite intersections.
\item  For each  $U_i$, there is an obifold chart  $(\tilde U_i, H_i, \pi_i)$ 
where $\tilde U_i$ is  a connected  open neighborhood   in some Euclidean space $\R^n$ with a right action of a   finite group $H_i$ with  the quotient map   $ \pi_i: \tilde U_i \to U_i$.  
\item  For  any  inclusion  $\iota_{ij}:  U_i\to U_j$,  there is an embedding  of orbifold charts
\ba\label{def:1}
 (\phi_{ij}, \lambda_{ij}):   (\tilde U_i, H_i, \pi_i)  \hookrightarrow   (\tilde U_j, H_j, \pi_j), 
\na
 which is given by an injective  group 
 homomorphism  $\lambda_{ij}:  H_i \to H_j$  and an embedding 
$
 \phi_{ij}:    \tilde U_i   \hookrightarrow    \tilde U_j $ covering the inclusion $\iota_{ij}$  such that $\phi_{ij}$ is $H_i$-equivariant with respect to  $\phi_{ij}$, that is, 
 \ba\label{equ:1}
 \phi_{ij}(   x \cdot g) =       \phi_{ij}(x )  \cdot  \lambda_{ij}( g),
 \na
 for $x\in \tilde U_i$ and $g\in H_i$.  
 When the action of $H_i$  is not effective, 
the subgroup of $H_i$ acting trivially on $U_i$ is isomorphically mapped  to the subgroup
of $H_j$ acting trivially on $U_j$.  
\end{enumerate}
Here the coherent  condition for $\cO$ is  described as follows: given   $U_i\subset U_j \subset U_k$, there exists an element
$g\in H_k$ such that 
\ba\label{coherent}
g \circ \phi_{ik} =  \phi_{jk} \circ  \phi_{ij}, \qquad 
g \lambda_{ik}(h) g^{-1} = \lambda_{jk}\circ \lambda_{ij} (h) 
\na for any
$ h\in H_i$.
 Two orbifold atlases  are called equivalent  if they are  included in a third orbifold atlas (called a common refinement). 
 
   Given an orbifold  $\f{X}$ and a point $x\in |\f{X}|$, let $(\tilde U, G, \pi)$ be an orbifold chart around $x$.  Then the {\bf local group}  at $x$ is defined to   be the  
stabilizer of  $\tilde x \in \pi^{-1}(x)$, which is uniquely defined up to conjugation.

 \end{definition} 

As proved in  Corollary 1.2.5 of \cite{MoePr:1997}, any orbifold atlas admits a refinement  $\cO$ such  that  each orbifold atlas $(\tilde U, H, \pi, U)$ in $\cO$, both $\tilde U$ and $U$ are contractible. Such an orbifold atlas is called a {\bf good}  atlas.
 For convenience, we may choose each  orbifold chart $(\tilde U, H, \pi)$ in $\cO$  such that $\tilde U$ is an open ball centred at the origin in $\R^n$ and $H$ is a finite group of linear transformations. 

 An orbifold $\f{X}$  is called {\bf compact}  (resp. {\bf connected}) if $|\f{X}|$ is compact  (resp. connected).  An orbifold  $\f{X}$ is {\bf oriented}  if there exists an orbifold atlas  $\{(\tilde U_i, H_i, \pi_i)\}$ such that, $\tilde U_i$ is an open set of an oriented Euclidean space $\R^n$ and  the $H_i$-action preserves the orientation, moreover, 
all  the embeddings $ \{\phi_{ij}\}$ in (\ref{def:1})  are orientation preserving.

 A smooth map $f: \f{X} \to  \f{Z}$ between two orbifolds is a continuous map $|f|: |\f{X}|  \to  |\f{Z}|$ with the following property: 
 there exist orbifold atlases $\cO_{\f{X}} = \{(\tilde U_i, H_i, \pi_i)\}$ and $\cO_{\f{Z}} 
 = \{(\tilde V_\alpha, G_\alpha, \pi_\alpha)\} $ for $\f{X} \to  \f{Z}$ respectively, such that
 \begin{enumerate}
\item For each $(\tilde U_i, H_i, \pi_i) \in \cO_{\f{X}}$, there is an orbifold chart
$ (\tilde V_{\alpha_i}, G_{\alpha_i}, \pi_{\alpha_i})  \in \cO_{\f{X}}$ with a  local smooth 
map  $f_i:  (\tilde U_i, H_i) \to  (\tilde V_{\alpha_i}, G_{\alpha_i})$ making the following diagram commute
\[
\xymatrix{
\tilde U_i \ar[r]^{f_i} \ar[d]^{\pi_i} & \tilde V_{\alpha_i}\ar[d]^{\pi_{\alpha_i}}\\
 U_i \ar[r]^{|f|_i}  & V_{\alpha_i},
}
\]
where $f_i$ is $G_{\alpha_i}$-equivariant
\item For any embedding of orbifold charts $(\tilde U_i, H_i, \pi_i)  \hookrightarrow   (\tilde U_j, H_j, \pi_j)$ in
$\cO_{\f{X}}$, there is an corresponding  embedding of orbifold charts 
\[
(\tilde V_{\alpha_i}, G_{\alpha_i}, \pi_{\alpha_i})  \hookrightarrow   (\tilde  V_{\alpha_j}, G_{\alpha_j}, \pi_{\alpha_j})
\]
 in
$\cO_{\f{Z}}$ such that the following diagram commutes  
\[
\xymatrix{
( \tilde U_i , H_i, \pi_i)  \ar[r]^{f_i} \ar[d]  &(\tilde V_{\alpha_i}, G_{\alpha_i}, \pi_{\alpha_i})\ar[d] \\
(\tilde U_j, H_j, \pi_j) \ar[r]^{f_j}  &  (\tilde  V_{\alpha_j}, G_{\alpha_j}, \pi_{\alpha_j}).
}
\]
in the obvious equivariant sense. 
\item The coherent condition (\ref{coherent}) is preserved under $f_i$'s.  
\end{enumerate}
A diffeomorphism $f:   \f{X} \to  \f{Z}$ is a smooth map with a smooth inverse.  The set of
all diffeomorphisms from $ \f{X}$ to itself is denoted by $\Diff(\f{X})$. 

A Riemannian metric on an orbifold  $\f{X} = (|\f{X}|, \cO)$ is a collection of Riemannian metrics on $\tilde U_i$'s   such that for each orbifold chart $(\tilde U_i, H_i, \pi_i)$ in $\cO$, 
$H_i$ acts isometrically on $\tilde U_i$ and all  the embeddings $ \{\phi_{ij}\}$ in (\ref{def:1})  are isometric.  We have already assumed that the orbifold atlas for $\f{X}$ is a good atlas.     Note that when $\f{X}$ is an
effective Riemannian $n$-dimensional  orbifold, the orthonormal frame  of  $ \f{X}$ is a smooth manifold  with a
locally free $O(n)$-action whose quotient space can be equipped a natural orbifold structure isomorphic to 
$\f{X}$.

\begin{remark}
An orbifold is called {\bf presentable}  if it arises from a locally free action of a compact Lie group on a smooth manifold.  It is obvious to see that any effective orbifold is presentable. Conjecturally, every orbifold is presentable ({\em cf.} Conjecture 1.55 in \cite{ALR}).   An orbifold is called a {\bf good orbifold}  if it is a quotient of a smooth manifold  by a  proper action of   a discrete group. Equivalently, an orbifold is good if its orbifold
universal cover is smooth, see \cite{Thurston:1997} and \cite{ALR} for detailed discussions. 
\end{remark}

In this paper, we  are mainly interested in  a noncompact oriented Riemannian  orbifold  $\f{X}$ with    a discrete group  $G$-action which is  proper, co-compact and isometric.  Given a discrete group $G$ and
an orbifold $\f{X}$, a smooth action of $G$ on $\f{X}$ is a group homomorphism
\[
G\longrightarrow \Diff (\f{X}).
\]
An action of $G$ on $\f{X}$ is called proper if the map 
\ba\label{proper}
G \times  \f{X} \longrightarrow  |\f{X}| \times |\f{X}| \qquad (g,x)\mapsto (x, gx)
\na
is proper.  It is easy to check that the quotient
space $G\backslash|\f{X}|$ can be equipped with an orbifold structure  as given by the following lemma. 

 \begin{lemma}
 \label{lem:quotient} 
 Let $\f{X}$ be a noncompact Riemannian orbifold, and $G$ is a discrete group  acting   properly, co-compactly  and isometrically on  $\f{X}$.  
\begin{enumerate} 
\item Then  $|\f{X}|$ is covered by finite number of $G$-slices of the  form $G\times_{G_i} U_i$ for $i=1, \cdots, N$, where $G_i$ is a finite  subgroup of $G$ and $U_i$ is the quotient space of some Euclidean ball $\tilde U_i$ by a finite group $H_i$, where $\tilde U_i$ admits a left $G_i$-action and right $H_i$-action.
\item Let $|\f{X}| =\cup_{i=1}^N G\times_{G_i} U_i$ be covered by $G$-slices as in (1). Then $\f{X}$ has the orbifold atlas generated by $\{G\times_{G_i}\tilde U_i, H_i\}_{i=1}^N.$
\item The orbit space $G\backslash \f{X}$, being a compact orbifold as a result of the properness of the action, admits an orbifold atlas generated by $\{\tilde U_i, G_i\times H_i\}_{i=1}^N.$
\end{enumerate}
\end{lemma}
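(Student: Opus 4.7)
The plan is to build a $G$-invariant slice around each point using properness, use co-compactness to extract a finite sub-cover, and then read off the orbifold structures on $\mathfrak{X}$ and on $G\backslash\mathfrak{X}$ directly from these slices.

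First I would show that every isotropy group $G_x = \{g\in G : gx=x\}$ is finite. Indeed, $G_x\times\{x\}$ is the preimage of the point $(x,x)$ under the proper map (\ref{proper}), so it is compact; since $G$ is discrete, $G_x$ is finite. Next, given $x\in |\mathfrak{X}|$, pick an orbifold chart $(\tilde V,H,\pi)$ with $\tilde V$ an open ball centred at some $\tilde x\in\pi^{-1}(x)$ on which $H$ acts orthogonally. Since $G_x$ acts isometrically on a neighbourhood $U$ of $x$ fixing $x$, and $\tilde V$ is simply-connected, each $g\in G_x$ admits a unique lift $\tilde g\in \Diff(\tilde V)$ fixing $\tilde x$; uniqueness of the lift forces $\tilde{gg'}=\tilde g\tilde{g'}$ and $\tilde g h = h\tilde g$ for all $h\in H$, so the lifted $G_x$-action commutes with $H$. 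After shrinking $\tilde V$ to a smaller $G_x$-invariant ball $\tilde U_i$ (possible since $G_x$ is finite), properness gives us a further shrinking so that $\{g\in G : gU_i\cap U_i\neq\emptyset\}=G_x$, where $U_i=H\backslash \tilde U_i$. Setting $G_i:=G_x$, $H_i:=H$, the map $[g,y]\mapsto gy$ then identifies $G\times_{G_i} U_i$ with a $G$-invariant open neighbourhood of the orbit $Gx$ in $|\mathfrak{X}|$. Co-compactness of the action means $G\backslash|\mathfrak{X}|$ is compact, so finitely many such slices $\{G\times_{G_i}U_i\}_{i=1}^N$ suffice to cover $|\mathfrak{X}|$, proving (1).

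For (2), on each slice $G\times_{G_i}U_i$ I would declare the chart $(G\times_{G_i}\tilde U_i,\,H_i,\,\tilde\pi_i)$ with $H_i$ acting by $[g,\tilde y]\cdot h=[g,\tilde y\cdot h]$ (well-defined because $G_i$ and $H_i$ commute on $\tilde U_i$) and $\tilde\pi_i([g,\tilde y])=g\pi_i(\tilde y)$. Coherence of these charts on overlaps $G\times_{G_i}U_i\cap G\times_{G_j}U_j$ is inherited from coherence of the original orbifold atlas of $\mathfrak{X}$ on the $H_i$-parts and from the $G$-equivariance on the $G$-parts; one obtains injective group homomorphisms $H_i\to H_j$ and equivariant embeddings precisely as in (\ref{def:1}) and (\ref{coherent}). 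For (3), since $G_i$ and $H_i$ commute on $\tilde U_i$,
\[
G\backslash\bigl(G\times_{G_i}U_i\bigr) \;=\; G_i\backslash U_i \;=\; G_i\backslash(H_i\backslash \tilde U_i) \;=\; (G_i\times H_i)\backslash \tilde U_i,
\]
so $(\tilde U_i,\,G_i\times H_i,\,\text{proj})$ are orbifold charts for $G\backslash\mathfrak{X}$; compactness follows from co-compactness and the finite cover.

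\textbf{Main obstacle.} The one point demanding care is the lifting in Step~2: a priori the $G_i$-action on $U_i$ only lifts to an extension $1\to H_i\to \tilde G_i\to G_i\to 1$ on $\tilde U_i$, not to a commuting direct-product action. My remedy is to use the distinguished fixed point $\tilde x$ in the ball $\tilde U_i$ (common to $G_i$ and $H_i$) to pin down a canonical lift of each $g\in G_i$; the simple-connectedness of the ball and uniqueness of lifts at the fixed point then supply both the homomorphism property of $g\mapsto \tilde g$ and the commutation with $H_i$, splitting the extension as a direct product $G_i\times H_i$. Everything else reduces to standard bookkeeping with orbifold charts.
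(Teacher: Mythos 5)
Your overall strategy is the same as the paper's: use properness to find a finite subgroup $G_i$ stabilizing a small neighbourhood $U_i$ of $x$, shrink further until $\{g: gU_i\cap U_i\neq\emptyset\}=G_i$, lift the $G_i$-action to an orbifold chart $(\tilde U_i,H_i,\pi_i)$, and invoke co-compactness to extract a finite cover; parts (2) and (3) then follow by bookkeeping. The paper's own proof is terse (it simply says that by the definition of an orbifold diffeomorphism the $G_i$-action lifts to $\tilde U_i$ $H_i$-equivariantly and "This means that $G_i$-action and $H_i$-action commute," and dismisses (2), (3) as "obvious"). You correctly identify the commuting-lift step as the real technical content, which the paper glosses over.

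However, your remedy for this "main obstacle" has a gap. You argue that because $\tilde V$ is simply connected, each $g\in G_x$ has a \emph{unique} lift $\tilde g$ fixing $\tilde x$, and you then deduce the homomorphism property and commutation with $H$ from this uniqueness. This does not work for two reasons. First, $\pi_i:\tilde U_i\to U_i$ is an orbifold (branched) quotient, not a covering map; the unique-lifting property of covering spaces only applies on the regular locus $\tilde U_i\setminus\mathrm{Sing}$, which need not be simply connected (e.g.\ $\C\setminus\{0\}$ for $\C/\Z_n$), so simple-connectedness of the ball itself gives nothing. Second, and more to the point, requiring $\tilde g(\tilde x)=\tilde x$ imposes no constraint: any two lifts of $g$ differ by an element of $H_i$, and since $(\tilde U_i,H_i)$ is a normal chart about $x$, \emph{every} $h\in H_i$ fixes $\tilde x$ (it is precisely the stabilizer of $\tilde x$). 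So there are $|H_i|$ lifts of $g$ fixing $\tilde x$, and your "canonical lift" is not pinned down. To make the argument rigorous one must normalize by additional data — e.g.\ require the lift to have a prescribed derivative at $\tilde x$ and argue on the orthonormal frame bundle (for effective orbifolds, where the frame bundle is a genuine manifold with free $O(n)$-action and the $G$-action becomes a free proper action) — or else accept that the lifted $G_i$-action only normalizes $H_i$ and justify separately why the resulting extension is a direct product in this setting. As it stands, your proposed resolution of the obstacle is not correct, although you correctly identify where the work lies and the remainder of your proof mirrors the paper's.
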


\begin{proof} As the action is proper, given an open set $U_i$ of $|\f{X}|$, then the pre-image of  the closure of $U_i\times U_i$ under the map (\ref{proper}) is compact. This implies that there is a finite subgroup $G_i$ such that $U_i$ is $G_i$-invariant. By choosing
$U_i$ small enough, we can assume that 
\[
\{g\in G| g (U_i) \cap U_i \neq \emptyset \} = G_i
\]
For each $U_i$, let $(\tilde U_i, H_i, \pi_i)$ be an orbifold chart over $U_i$, then taking a smaller $U_i$,  by the definition of a diffeomorphism of  $\f{X}$, we know that 
$G_i$-action on $U_i$ can be lifted to an action  on $\tilde U_i$ which is $H_i$-equivariant. This means that $G_i$-action and $H_i$-action commute.  We can choose $\tilde U_i$ to be a  Euclidean ball
with a linear action of $H_i$. 
As $G$ acts co-compactly on $\f{X}$, there are finitely many $G$-slices of the form 
\[
G\times_{G_i} U_i. 
\]
This completes the proof of the Claim (1). Claims (2) and (3) are obvious.
\end{proof}

It becomes a majority  view that  the language of groupoids provides a convenient and economical  way to describe orbifolds.    In this paper,  the groupoid viewpoint is also essential to get a correct   $C^*$-algebra associated to an orbifold in order to define
a correct version of $K$-homology for orbifolds.    We briefly recall the   definition of  an orbifold groupoid which is just a proper \'etale groupoid constructed from its orbifold atlas.

\begin{definition} {\bf (Proper \'etale groupoids)} 
 A Lie groupoid
 $\cG = (\cG_1 \rightrightarrows \cG_0)$  consists of two smooth manifolds $\cG_1$ and $\cG_0$, together with five smooth maps $(s, t, m, u, i)$ satisfying the following properties. 
  \begin{enumerate}
\item  The source map  and the target map $s, t: \cG_1 \to  \cG_0$ are submersions.
\item The composition map
\[
m:  \cG_1^{[2]}: =\{(g_1, g_2) \in  \cG_1 \times  \cG_1: t(g_1) = s(g_2)\} \longrightarrow \cG_1
\]
written as $m(g_1, g_2) = g_1\cdot g_2$ for composable elements $g_1$ and $g_2$. 
satisfies the obvious associative property.
\item The unit map $u: \cG_0 \to \cG_1$ is a two-sided unit for the composition.
\item The inverse map $i: \cG_1 \to \cG_1$, $i(g) = g^{-1}$,  is a two-sided inverse for the composition $m$. 
\end{enumerate}
A Lie groupoid $\cG = (\cG_1 \rightrightarrows \cG_0)$   is {\bf  proper } if $(s, t): \cG_1 \to \cG_0 \times \cG_0$ is proper, and called {\bf   \'etale } if $s$ and $t$ are local diffeomorphisms.   
\end{definition}

In the category of proper \'etale groupoids, a very important notion of morphism is the so-called generalized morphisms developed by Hilsum and Skandalis (for the category of  general Lie groupoids in  \cite{HS:1987}).
Let $\cG_1 \rightrightarrows \cG_0$ and 
$\cH_1  \rightrightarrows \cH_0$ be two   proper \'etale  Lie  groupoids.   A {\bf   generalized morphism }
between 
$ \cG$ and  $ \cH  $, denoted by $f:  \xymatrix{\cG \ar@{-->}[r] &  \cH}$, 
  is   a right
principal $\cH$-bundle $P_f$  over $\cG_0$ which is also a left $\cG$-bundle over $\cH_0$ such that the left $\cG$-action and the right $\cH$-action commute,  formally denoted by
\ba\label{H-S}
\xymatrix{
\cG_1 \ar@<.5ex>[d]\ar@<-.5ex>[d]&P_f \ar@{->>}[ld] \ar[rd]&
\cH_1 \ar@<.5ex>[d]\ar@<-.5ex>[d]\\
\cG_0&&\cH_0
}
\na
See \cite{CarW:2011} for more details.  Note that a 
generalized morphism $f$
between 
$ \cG$ and  $ \cH  $ is invertible if $P_f$ in (\ref{H-S}) is also a principal $\cG$-bundle
over $\cH_0$. Then $ \cG$ and  $ \cH  $ are called Morita equivalent. 

\begin{remark}
\label{rem:OrbifoldGroupoidStructure}
Given a proper \'etale Lie groupoid $\cG$, there is  a canonical orbifold structure on its orbit space 
$|\cG|$ (\cite[Prop. 1.44]{ALR}). Two Morita equivalent 
proper \'etale Lie groupoids define two diffeomorphic  orbifolds  (\cite[Theorem 1.45]{ALR}).  
Conversely,     given an orbifold $\f{X}$ with an orbifold  atlas $\cO =\{(\tilde U_i, H_i, \pi_i)\}$, there is a canonical proper \'etale Lie groupoid $\cG_{\f{X}}$,  locally given by the action groupoid $\tilde U_i \rtimes H_i\rightrightarrows \tilde U_i $, see \cite{MoePr} and \cite{LU}.    Two equivalent  orbifold atlases define
two  Morita equivalent proper \'etale Lie groupoids.   A proper \'etale Lie groupoid  arising from an orbifold
atlas  will be called an {\bf orbifold groupoid} for simplicity in this paper.  Our notion of orbifold groupiods is slightly different from that in
literature where any  proper \'etale Lie groupoid is called an orbifold groupoid.  One can check that a smooth map between two orbifolds
corresponds to a generalized morphism between the associated orbifold groupoids.
\end{remark}

 Using the language of orbifold groupoids, it is simpler to describe de Rham cohomology and orbifold $K$-theory for a compact
 orbifold $\f{X}$. Let $\cG = \cG_{\f{X}}$ be the associated orbifold groupoid. Then
 the   de Rham cohomology   of  $\f{X}$, denoted by  $H^*_{orb}(\f{X}, \R)$,  is just  the cohomology of the $\cG$-invariant de Rham complex 
   $(\Omega^*(\f{X}), d)$, where 
   \[
   \Omega^*(\f{X}) =  \{\omega \in \Omega^* (\cG_0) | s^* \omega = t^*\omega\}.
   \]
  The  Satake-de Rham theorem  says that there is  a natural  isomorphism
\[
H^*_{orb}(\f{X}, \R) \cong H^*(|\f{X}|, \R). 
\] 
Hence, the orbifold de Rham cohomology   of  $\f{X}$ does not provide any orbifold information about   $\f{X}$. 
 In terms of orbifold atlas $\cO =\{(\tilde U_i, H_i, \pi_i)\} $ of $\f{X}$, a differential form $\omega \in  \Omega^*(\f{X})$ can be written as
 a family of local equivariant  differential forms on $\tilde U_i$'s which  respect the equivariant condition (\ref{equ:1}) and the
 coherent condition (\ref{coherent}).    Let
 $\omega$ be a compactly supported $n$-form on $\f{X}$, we define 
 \ba\label{orbi:integral}
 \disp{\int}_{\f{X}}^{orb} \omega = \sum_{i} \dfrac{1}{|H_i|}\disp{\int}_{\tilde U_i} \rho_i \omega,
 \na
 where $ \rho_i $ is a smooth partition of unity subordinate to $\{\tilde U_i\}$ in an obvious sense. 
  Then this is a well-defined integration map for an $n$-dimensional orbifold $\f{X}$.

An orbifold vector bundle $\cE$  over an orbifold $\f{X} = (|\f{X}|, \cO)   $ is a coherent family  of 
equivariant vector bundles 
\[
\{ (\tilde E_i \to \tilde U_i, H_i )  \}
\]
  such  that 
for any embedding of orbifold charts $\phi_{ij}:   (\tilde U_i, H_i)  \hookrightarrow   (\tilde U_j, H_j)$,  there is   a  $H_i$-equivariant  bundle map   $\tilde \phi_{ij}: \tilde E_i \to \tilde E_j$ covering 
$ \phi_{ij}: \tilde U_i \to \tilde U_j$.  The coherent condition for $\tilde \phi_{ij}$ is given by (\ref{coherent})  with $\phi_{ij}$'s replaced by
 $\tilde \phi_{ij}$'s. Then the total space $E=  \bigcup (\tilde E_i/H_i)$ of an orbifold vector bundle $\cE \to \f{X}$  has a canonical orbifold structure  given by $\{(\tilde E_i, H_i)\}$.     We remark that orbifold vector bundles in this paper are those called proper orbifold vector bundles in \cite{Kawasaki:1981} and \cite{ALR}.  An orbifold vector bundle is called  real (resp.  complex)  if each $E_i$ is real (resp. complex). For example, for an orbifold $\f{X}$, the local tangent  and cotangent bundles form real orbifold vector bundles over $\f{X}$, denoted by
 $T\f{X}$ and $T^*\f{X}$ respectively. 
 
 A smooth section of an orbifold vector bundle is a family of  invariant smooth sections of $\tilde E_i$'s which behave well under 
 the equivariant condition (\ref{equ:1}) and the coherent condition (\ref{coherent}).  The space of smooth sections of $\cE$ will be
 denoted by $\Gamma(\f{X}, \cE)$. 
A connection $\nabla$ on a complex  orbifold vector bundle $\cE\to\f{X}$ is a family of invariant connections $\{ \nabla_i\}$ 
 on
$
\{ ( \tilde E_i \to \tilde U_i, H_i)  \}
$
which  are compatible with the  bundle maps $\{\tilde \phi_{ij}\}$. A connection $\nabla$ on $\cE$ defines a covariant derivative, which is 
a differential operator 
\[
\nabla: \Gamma(\f{X}, \cE) \longrightarrow \Gamma(\f{X}, T^*\f{X}\otimes \cE)
\]
satisfying Leibniz's rule in the usual sense. 
   
 Recall that a  vector bundle over a Lie groupoid $\cG = (\cG_1 \rightrightarrows \cG_0)$ is a $\cG$-vector bundle $E$  over $\cG_0$, that is, a  vector bundle $E$  with a fiberwise linear action of $\cG_1$   covering the canonical  action of $\cG_1$ on $\cG_0$. The corresponding principal bundle can be described in terms of  a Hilsum-Skandalis morphism
 \[
  \xymatrix{\cG \ar@{-->}[r] & GL(k) } 
  \]
  where $k$ is the rank of $E$, and  the general linear group $GL(k)$, over $\R$ or $\C$,  is treated as  a Lie groupoid 
   $GL(k) \rightrightarrows \{e\}$. See \cite{CarW:2011}  for details.
  
  One can check that an orbifold vector bundle over  $\f{X}$  defines a canonical vector bundle over the orbifold  groupoid $\cG$.  
  The orbifold $K$-theory of $\f{X}$, denoted by $K^0_{orb}( \f{X})$,   is defined to be the Grothendieck ring of stable isomorphism classes of   complex  oribifold  vector bundles over $\f{X}$.   Let 
 $K^0(\cG)$ be the  Grothendieck ring of stable isomorphism classes of  complex  vector bundles over $\cG$. Then we have an obvious isomorphism
 \[
 K^0_{orb}( \f{X}) \cong K^0(\cG).
 \]
 Here, $\cG$ is the canonical orbifold groupiod associated to  $\f{X}$, so there is no ambiguity
 in the above isomorphism caused by another   Morita equivalent proper   \'etale Lie groupoid.

Given an orbifold groupoid $ \cG = (\cG_1 \rightrightarrows \cG_0)$ for an orbifold $\f{X}$, 
there are two canonical  convolution  $C^*$-algebras:  
the reduced and maximal  $C^*$-algebras, denoted by $C_{red}^*(\f{X})$ and  $C_{max}^*(\f{X})$.  For readers' benefit, we recall the definition from  Chapter 2.5 in \cite{Connes:1994} and \cite{Ren:1980}.

\begin{definition} {\bf  (Reduced and maximal  $C^*$-algebras)}\label{GroupCstarAlgebra}  
The reduced  $C^*$-algebra $C_{red}^*(\f{X})$ is the completion of the convolution algebra $C_c^\infty(\cG_1)$  of smooth compactly supported functions on $\cG_1$ for the norm 
\[
\|f\| =\sup_{x\in \cG_0} \|\pi_x(f)\|
\]
for $f\in C_c^\infty(\cG_1)$. Here $\pi_x$ is the involution representation of $C_c^\infty(\cG_1)$ 
in the Hilbert space $l^2(s^{-1}(x))$.   The maximal  $C^*$-algebra $C_{max}^*(\f{X})$ is
 the completion of the convolution algebra $C_c^\infty(\cG_1)$ for the norm
 \[
 \|f\|_{max} = \sup_\pi \{\|\pi (f)\|\}
 \]
 where the supremum is taken over all possible Hilbert space representation of $C_c^\infty(\cG_1)$. 
\end{definition}

\begin{remark} For an effective  or a presentable orbifold, it is straightforward to see that there is an isomorphism
\[
K^*_{orb}(\f{X}) \cong K_*(C_{red}^*(\f{X})),
\]
where $K_*(C_{red}^*(\f{X}))$ is the $K$-theory of the  reduced  $C^*$-algebra $C_{red}^*(\f{X})$.  For a non-effective orbifold,
this becomes a very subtle issue. Nevertheless, the above isomorphism still holds for general orbifolds as established in 
\cite{TaTsW}. 
\end{remark}

\subsection{Kawasaki's index theorem for compact orbifolds}
\label{sec:Kawasaki's index}

The Atiyah-Singer type index theorem for elliptic operators over a compact orbifold was established by Kawasaki in \cite{Kawasaki:1981}.
When the underlying compact orbifold is good, higher index was studied in \cite{Farsi:1992-1} and \cite{Bunke:2007}, and 
twisted $L^2$-index  with trivial Dixmier-Douady invariant was developed   in \cite{MarMat}.  In this subsection, we will review the original  Kawasaki's orbifold index theorem using modern language of twisted sectors and delocalized characteristic classes.

 Let $\f{X}  = (|\f{X}|, \cO) $ be an orbifold. Then the set of pairs
\[
 \{ (x, (g)_{H_x})|  x\in |v|, g\in H_x\},
\]
where $(g)_{H_x}$ is the conjugacy class of $g$ in the local group $H_x$, has a natural 
orbifold structure given by
\begin{equation}
\label{eq:TwistedSectorLocalStructure}
\{ \bigl(\tilde U^{g}, Z(g), \tilde \pi \big) |  g\in H_x\}. 
\end{equation}
Here for each orbifold chart $(\tilde U,  H,  \pi) \in \cO$  around  $x$,  $Z (g)$ is the centralizer of $g$ in $H_x$ and 
$\tilde U^{g}$ is the fixed-point set of $g$ in $\tilde U$.  This orbifold, denoted by $I \f{X} $, is called the {\bf inertia orbifold} of $\f{X}$. The inertia orbifold $I\f{X}$  consists of a  disjoint union of sub-orbifolds of $\f{X}$.  There is a canonical orbifold immersion 
\[
ev:  I\f{X}\longrightarrow   \f{X}
\]
given by the obvious inclusions $\{\bigl(\tilde U^{g}, Z(g), \tilde \pi \big) \to (\tilde U,  H,  \pi)\}$.

To describe the connected components of $I\f{X}$, we need to introduce an equivalence relation on the
set of conjugacy classes in local groups as in \cite{ALR}.  For each $x\in \f{X}$, let 
$(\tilde U_x,  H_x,  \pi_x, U_x)$ be a local orbifold chart at $x$. If $y\in U_x$, then up to conjugation, there is an injective homomorphism of local groups $H_y\to H_x$. Under this injective map,  
the conjugacy class $(g)_{H_x}$  in $H_x$ is well-defined for $g\in H_y$. We define the equivalence to be generated
by the relation $(g)_{H_y}\sim (g)_{H_x}$. Let $\cT_{\f{X}}$ be the set of equivalence classes. Then the inertia orbifold is given by
\[
I\f{X}  = \bigsqcup_{(g) \in \cT_{\f{X}}} \f{X}_{(g)},
\]
where $\f{X}_{(g)} =\{(x, (g')_{H_x})| g'\in H_x, (g')_{H_x} \sim  (g)\}$.
 Note that
$\f{X}_{(e)} =\f{X}$ is called the non-twisted sector and  $\f{X}_{(g)}$ for $g\neq e$ is called a {\bf  twisted sector}
of $\f{X}$.  

Given a complex orbifold vector bundle $E$ over $\f{X}$,     the pull-back bundle $ev^*E$ over $I\f{X}$  has a canonical  automorphism $\Phi$.
With respect to a  Hermitian metric on $E$,  there is an eigen-bundle decomposition
of $ev^*E$
\[
ev^* E = \bigoplus_{\theta \in \Q\cap [0, 1) } E_\theta
\]
where  $E_\theta$ is a complex  orbifold vector bundle over $I\f{X} $,  on which $\Phi$ acts  by multiplying $e^{2\pi \sqrt{-1} \theta}$.   Define the {\bf delocalized Chern character} of $E$ by
\[
\ch_{deloc} (E) = \sum_\theta e^{2\pi \sqrt{-1} \theta} \ch(E_\theta) 
\in     H^{ev}_{orb}(I\f{X}, \C),
 \]
 where $\ch(E_\theta) \in  H^{ev} (I\f{X}, \C)$ is the ordinary Chern character of  $E_\theta$.
 The odd delocalized Chern character
 \[
 \ch_{deloc}: K^1_{orb}(\f{X}) \longrightarrow H^{odd} (I\f{X}, \C)
\]
can be defined in the usual way. The following proposition is established in \cite{HuWa}.  

\begin{proposition}  {\cite[Proposition 2.7]{HuWa}}
For any compact    orbifold $\f{X}$, the delocalized Chern character 
  gives a ring isomorphism 
  \[
  \ch_{deloc}: K^*_{orb}(\f{X}) \otimes_\Z \C  \longrightarrow H^{*} (I\f{X}, \C)
  \]
  over $\C$. 
\end{proposition}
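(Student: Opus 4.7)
The plan is to establish the isomorphism in three stages: (i) identify both sides explicitly on a good orbifold chart and verify the local case using classical representation theory; (ii) globalize via a Mayer--Vietoris comparison over a finite good atlas of the compact orbifold $\f{X}$; (iii) check multiplicativity separately at the level of the eigenbundle decomposition.

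\textbf{Local model.} For a good chart $(\tilde U, H, \pi)$ with $\tilde U$ an $H$-invariant open ball on which $H$ acts linearly, the local sub-orbifold $\f{X}_U$ is $H$-equivariantly contractible, so $K^0_{orb}(\f{X}_U) \cong R(H)$ and $K^1_{orb}(\f{X}_U) = 0$. By the local structure~(\ref{eq:TwistedSectorLocalStructure}), the inertia orbifold $I\f{X}_U$ is a disjoint union over $(g) \in \mathrm{conj}(H)$ of sub-orbifolds with chart $(\tilde U^g, Z_H(g), \tilde\pi)$, where each $\tilde U^g$ is a linear subspace on which $Z_H(g)$ acts linearly, making the underlying quotient space contractible. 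By Satake--de Rham, $H^*(I\f{X}_U, \C) \cong \C^{|\mathrm{conj}(H)|}$ is concentrated in degree zero. Under these identifications $\ch_{deloc}$ sends $V \in R(H)$ to the character function $(g) \mapsto \mathrm{tr}(V(g))$, which is a $\C$-linear isomorphism by the classical fact that irreducible characters form a basis of the space of class functions on $H$.

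\textbf{Globalization and multiplicativity.} Compactness of $\f{X}$ yields a finite good atlas. Both $K^*_{orb}(-) \otimes_\Z \C$ and $H^*(I(-), \C)$ satisfy Mayer--Vietoris sequences for open sub-orbifold decompositions: on the $K$-theory side one uses the isomorphism $K^*_{orb}(\f{X}) \cong K_*(C_{red}^*(\f{X}))$ (cf.\ the remark after Definition~\ref{GroupCstarAlgebra}) and the six-term sequence associated to an ideal in the groupoid $C^*$-algebra; on the cohomology side the inertia construction is manifestly local in the sense of Remark~\ref{rem:OrbifoldGroupoidStructure}, so ordinary Mayer--Vietoris applies on $I\cG_{\f{X}}$. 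The natural transformation $\ch_{deloc}$ intertwines these sequences (up to sign) by naturality under open embeddings, and an induction on the size of the atlas with the five-lemma reduces the global claim to the local case. For multiplicativity, given orbifold bundles $\cE, \cF$ with eigenbundle decompositions, the canonical automorphism $\Phi$ on $ev^*(\cE \otimes \cF)$ produces $(\cE \otimes \cF)_\theta = \bigoplus_{\theta_1 + \theta_2 \equiv \theta \pmod{1}} \cE_{\theta_1} \otimes \cF_{\theta_2}$; combining $e^{2\pi i\theta_1}\,e^{2\pi i\theta_2} = e^{2\pi i(\theta_1 + \theta_2)}$ with multiplicativity of the ordinary Chern character then yields $\ch_{deloc}(\cE \otimes \cF) = \ch_{deloc}(\cE) \cdot \ch_{deloc}(\cF)$, and compatibility with units is immediate.

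\textbf{Main obstacle.} The principal technical difficulty is setting up matched Mayer--Vietoris sequences for the two functors so that the connecting homomorphisms are genuinely intertwined by $\ch_{deloc}$; this requires the identification of orbifold $K$-theory with the $K$-theory of $C_{red}^*(\f{X})$ in full generality, including the non-effective case. A secondary subtlety, important for non-effective orbifolds, is that locally $K^0_{orb}(\f{X}_U) = R(H)$ rather than $R(H/K)$ for the kernel $K$ of the local action, so one must confirm that $\ch_{deloc}$ captures the full representation ring; this is ensured because characters are evaluated on \emph{all} conjugacy classes of $H$, matching the definition of $I\f{X}$ used above.
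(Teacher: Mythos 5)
The paper cites this proposition from \cite{HuWa} without reproducing a proof, so there is nothing internal to compare against; I assess your argument on its own merits. The local computation and the multiplicativity check are both correct, and a Mayer--Vietoris local-to-global strategy is a reasonable thing to try. However, there is a genuine gap in the globalization step. You compute the local model in the \emph{contravariant} picture: $K^0_{orb}(\f{X}_U)\cong R(H)$ by equivariant contractibility of $\tilde U$, and $H^*(I\f{X}_U,\C)$ concentrated in degree zero. Yet the Mayer--Vietoris machinery you invoke lives in the \emph{covariant, compactly supported} picture: for an open sub-orbifold $U$, $C_{red}^*(\f{X}_U)$ is an ideal in $C_{red}^*(\f{X})$, and the six-term sequence computes $K_*$ of that ideal, which is compactly supported $K$-theory and is \emph{not} homotopy invariant, so ``equivariantly contractible'' gives you nothing there. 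In that picture the correct local computation is a Thom isomorphism that places $R(H)$ in degree $\dim\tilde U\bmod 2$, while $H^*_c\bigl(\tilde U^g/Z_H(g),\C\bigr)$ sits in degree $\dim\tilde U^g$, which can have the opposite parity, and may even vanish when $Z_H(g)$ reverses orientation on $\tilde U^g$. The delocalized Chern character as defined (pullback plus eigenbundle decomposition) does not intertwine these compactly supported groups without a Riemann--Roch type Todd-class correction on the normal bundle of the sectors. So the local and global halves of your argument are, as written, not computations in the same functor.

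A second gap is the claim that $\ch_{deloc}$ intertwines the connecting homomorphisms ``by naturality under open embeddings.'' Naturality with respect to restriction maps does not by itself give compatibility with boundary maps; one must match an explicit model of the $K$-theoretic index map with the cohomological connecting map. You flag this as ``the main obstacle'' and then leave it unproved, so the argument is incomplete at precisely its central step even if the previous issue were resolved. A cleaner route is to present the compact orbifold as $M/G$ with $G$ a compact Lie group acting with finite stabilizers and use an Atiyah--Segal type localization of $K^*_G(M)\otimes\C$ by conjugacy classes of finite-order elements, matching each sector with the ordinary Chern character of the corresponding fixed-point orbifold; this sidesteps Mayer--Vietoris and the compact-support parity issue altogether.
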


Let $\f{X}$ be a compact Riemannian  orbifold with an orbifold atlas $\cO=\{(\tilde U_i, H_i, \pi)\}$. Then the  bundles    of Clifford algebras over   $\tilde U_i$'s,  
$
\{(\Cliff(T\tilde U_i) \to \tilde U_i,  H_i)\},
$
 whose fiber at $x\in \tilde U_i$ is the real Clifford algebra $\Cliff(T_x \tilde U_i)$,   
define  an {\bf orbifold Clifford bundle}, denoted by $\Cliff(T\f{X})$.   As in the manifold case ({\em cf.} Chapter 3.3 in \cite{BGV}), there is a connection on $\Cliff(T\f{X})$, induced from the Levi-Civita connection $\nabla^{T\f{X}}$,  which is compatible with the Clifford multiplication on $\Gamma(\f{X}, \Cliff(T\f{X}))$.

\begin{definition} 
\label{def:DiracBundle}
 Let  $\f{X}$ be a compact Riemannian even-dimensional  orbifold with an orbifold atlas $\cO=\{\tilde U_i, H_i, \pi)\}$.
\begin{enumerate} \item  A {\bf Dirac}  bundle over $\f{X}$ is a $\Z/2\Z$-graded Hermitian  orbifold  vector bundle $\cE$ 
 with a graded self-adjoint  smooth action of $\Cliff(T\f{X})$ in the sense that for each orbifold chart
$(\tilde U_i, H_i, \pi)$, the action of $v\in \Cliff(T_x \tilde U_i)$ is  of degree one and skew-adjoint.  
\item A connection $\nabla^\cE$ on a Dirac bundle is called a Clifford connection if for any $v\in \Gamma (\f{X}, \Cliff(T\f{X}))$ and
$\xi \in \Gamma (\f{X}, T\f{X})$,
\[
[\nabla_\xi^\cE, c(v)] =  c( \nabla_\xi v).
\]
Here $c$ is denote the Clifford action $\Cliff(T\f{X}) \otimes \cE \to \cE$.
\end{enumerate}
\end{definition}

\begin{remark} \begin{enumerate}
\item By means of a smooth partition of unity on $\f{X}$, any Dirac bundle over $\f{X}$
 admits a Clifford connection. The space of all 
Clifford connections is an affine space modelled on $\Gamma(\f{X}, T^*\f{X}\otimes \End_{\Cliff(T\f{X})}(\cE) )$. Here
$\End_{\Cliff(T\f{X})}(\cE) $ is the bundle of  degree $0$ endomorphisms which commute with the Clifford action. 
\item  For an even dimensional  compact Riemannian  orbifold $\f{X}$,  $\f{X}$ is Spin$^c$ if and only if there exists an  orbifold  complex vector bundle $\cS$, called a complex spinor bundle,  such that 
the complexified version of $\Cliff(T\f{X})$, denoted by $\Cliff_\C(T\f{X})$, is isomorphic to $\End_\C(\cS)$.  A choice of  such an $\cS$ is called a Spin$^c$ structure on $\f{X}$.  Then any Dirac bundle 
$\cE$ can be written as $\cS \otimes   \cW$ for an orbifold complex vector bundle $\cW$ over $\f{X}$. 
\item The Grothendieck group of Dirac bundles over $\f{X}$ forms an abelian group, denoted by $K^0(\f{X}, \Cliff_\C(T\f{X}))$.  The  relative delocalized Chern character
\[
\ch_{deloc}^\cS:  K^0_{orb}(\f{X}, \Cliff_\C(T\f{X})) \longrightarrow  H^{ev} (I\f{X}, \R) 
\]
can be defined by the same construction as in Section 4.1 of \cite{BGV}. When $\f{X}$ is equipped with a
Spin$^c$ structure $\cS$,  then
\[
K^0_{orb}(\f{X}, \Cliff_\C(T\f{X}))  \cong K^0_{orb}(\f{X}),
\]
and  any  Dirac bundle $\cE$ written as $\cS\otimes \cW$,  we have 
\[
\ch_{deloc}^\cS (\cS\otimes \cW ) = \ch_{deloc} (\cW).
\]
\end{enumerate}
\end{remark}
 
Given a Dirac bundle $\cE$ with a Clifford connection $\nabla^\cE$ over a compact Riemannian even-dimensional  orbifold $\f{X}$,  the 
Dirac operator  on $\cE$ is defined to be the following composition
\[
\Dirac^\cE:  \xymatrix{ \Gamma (\f{X}, \cE)  \ar[r]^{\nabla^\cE \qquad   \qquad } & \Gamma  (\f{X}, T^*\f{X}\otimes
\cE ) \cong \Gamma  (\f{X}, T\f{X}\otimes
\cE )   \ar[r]^{\qquad   \qquad c} & \Gamma (\f{X}, \cE).}
\]
When $\f{X}$ is a spin$^c$ orbifold with its spinor bundle $\cS$, then $\cE = \cS\otimes \cW$ for a complex orbifold vector bundle $\cW$, and  $\Dirac^\cE$ will be written as $\Dirac^{\cW}$.  With respect to the $\Z/2\Z$-grading on $\cE = \cE_+ \oplus \cE_-$,
\[
\Dirac^\cE =  \left[ \begin{matrix}
0 &\Dirac^\cE_- \\
\Dirac^\cE_+  &0  \end{matrix}\right]
\]
where $\Dirac^\cE_-$ is the formal adjoint of $\Dirac^\cE_+$. 
Let  $ L^2  (\f{X}, \cE)$ and $ L^2_1  (\f{X}, \cE) $ be   the completions of   $ \Gamma  (\f{X}, \cE)$ with respect to $L^2$-norm and $L_1^2$-norm respectively. 
 Then 
\[
\Dirac^\cE : L^2_1  (\f{X}, \cE) \longrightarrow  L^2  (\f{X}, \cE) 
\]
is a Fredholm operator by showing that the unbounded operator 
\[
\Dirac^\cE:    L^2  (\f{X}, \cE) \longrightarrow  L^2  (\f{X}, \cE)
\]
 has  compact resolvent.  One could also establish the finite dimensionality of $\ker \Dirac^\cE$ by showing
 that the heat operator $e^{-t(\Dirac^\cE)^2}$ is a trace-class operator for any compact Riemannian  orbifold $\f{X}$.  This latter claim follows from the   fact that   $e^{-t(\Dirac^\cE)^2}$ has a smooth kernel. In  next section  we will prove  the  smooth  heat  kernel for any complete  Riemannian  orbifold, so we omit the details here.

The Fredholm index of $\Dirac^\cE$ is given by 
\[
\ind \Dirac^\cE = \dim_\C \ker \Dirac^\cE_+ -\dim_\C  \ker \Dirac^\cE_-. 
\]
To rephrase  the orbifold index theorem  in \cite{Kawasaki:1981} ({\em cf.}  \cite{Bunke:2007}), we introduce the notations for
the delocalized A-hat class and  the delocalized Todd class of   $\f{X}$
 as follows.  For simplicity, we assume that each twisted sector $\f{X}_{(g)}$ is connected, otherwise, these characteristic classes are defined on each connected component.  Considering the orbifold immersion 
 \[
 ev:  I \f{X} \longrightarrow  \f{X},
 \]
 we can decompose the pull-back of the  orbifold tangent bundle  along each twisted sector $\f{X}_{(g)}$ (using the
 Riemannian metric)
 \[
 T\f{X} |_{\f{X}_{(g)}}  \cong T\f{X}_{(g)} \oplus \cN_{(g)}
 \]
 such that the pull-back of the Riemannian curvature of $T\f{X}$ is decomposed as
 \[
ev^*  R^{T\f{X}}  = R^{T\f{X}_{(g)}} \oplus   R^{\cN_{(g)}}
\]
where $ R^{T\f{X}_{(g)}}$ is the Riemannian curvature of $\f{X}_{(g)}$ and $R^{\cN_{(g)}}$ is the curvature of the  induced connection   
on $\cN_{(g)}$.  Then  the delocalized A-hat class of $\f{X}$ is a cohomology class on $I\f{X}$. When  restricted to  each  twisted sector $\f{X}_{(g)}$,   it    is  defined by 
\[
   \hat{A}_{deloc}(\f{X}) = \dfrac{ \hat{A}(\f{X}_{(g)}) }{[ \det \big(1-\Phi_{(g)}  e^{R^{\cN_{(g)}}/2\pi i}  \big) ]^{1/2}} .
 \]
 Here $\hat{A} (\f{X}_{(g)})$ is the  A-hat form of $\f{X}_{(g)}$ defined by
 \[
\left[ \det  \left( \dfrac {R^{T\f{X}_{(g)}}  /  4\pi i } {\sinh (R^{T\f{X}_{(g)}} /  4\pi i  ) }  \right) \right] ^{1/2} 
 \]
 and $\Phi_{(g)}$ is  the automorphism of $\cN_{(g)}$ defined by the inertia orbifold structure.
If  $I\f{X}$ is a  spin$^c$  orbifold,  the delocalized Todd class of   $\f{X}$ is a cohomology class on $I\f{X}$. When   restricted to  each  twisted sector $\f{X}_{(g)}$,  it   is defined by  the closed differential form 
\[
\Td_{deloc}(\f{X})   = \dfrac{  \Td(\f{X}_{(g)})   }{\det \big(1 - \Phi_{(g)}^{-1}  e^{ R^{\cN_{(g)}}/2\pi i}  \big)},
 \]
  where $\Td(\f{X}_{(g)}) =  \det  \left ( \dfrac { iR^{\f{X}_{(g)}}/2\pi }{ 1- e^{-iR^{\f{X}_{(g)}}/2\pi  }} \right)$ is the usual Todd  form  of the spin$^c$ orbifold $\f{X}_{(g)}$.  

\begin{theorem} \label{index:compact}   Let $\f{X}$ be a compact Riemannian even-dimensional  orbifold. Let $\Dirac^\cE$ be a Dirac operator on a Dirac bundle
$\cE$ over $\f{X}$.  Then   index of $\Dirac^\cE$ is given by 
\[
 \ind \Dirac^{\cE} = \int_{I\f{X}}^{orb}  \hat{A}_{deloc}(\f{X})  \ch^\cS_{deloc}(\cE).
  \]
In particular, if  $I \f{X}$  is a  spin$^c$  orbifold and $\cW$ is a  orbifold Hermitian vector bundle    with a Hermitian
  connection.  Then the index of  the spin$^c$ Dirac operator $\Dirac^{\cW}_+$ is given by the formula
    \[
  \ind\Dirac^{\cW} = \int_{I\f{X}}^{orb}  \Td_{deloc}(\f{X})  \ch_{deloc}(\cW).
  \]
  
 \end{theorem}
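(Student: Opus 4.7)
The plan is to adapt the classical heat-kernel proof of the Atiyah--Singer index theorem to the orbifold setting by localizing on uniformizing charts. The starting point is the McKean--Singer identity
$$\ind \Dirac^{\cE} = \Str\bigl(e^{-t(\Dirac^{\cE})^2}\bigr),\qquad t>0,$$
which is valid once the heat operator is known to be trace class with a smooth kernel $K_t$ on $\f{X}\times\f{X}$ (a fact the author has signalled will be proved in the next section). Writing the right side as the orbifold integral $\int_{\f{X}}^{orb}\str K_t(x,x)\,dx$, the theorem reduces to analyzing the small-$t$ asymptotics of this pointwise supertrace chart by chart.

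On each chart $(\tilde U_i, H_i, \pi_i)$ let $\tilde K_t^i$ denote the heat kernel of the pullback $H_i$-invariant Dirac operator acting on the pullback Dirac bundle $\tilde{\cE}_i$. Standard averaging over the finite covering gives
$$K_t(\pi_i(\tilde x), \pi_i(\tilde y)) = \sum_{h\in H_i}\, h\cdot \tilde K_t^i(\tilde x, h^{-1}\tilde y),$$
with $h$ acting on the bundle fibre. Combining this with (\ref{orbi:integral}) and an $H_i$-invariant partition of unity $\{\rho_i\}$ subordinate to $\{\tilde U_i\}$, the supertrace unwinds as
$$\Str\bigl(e^{-t(\Dirac^{\cE})^2}\bigr) = \sum_i \frac{1}{|H_i|}\sum_{h\in H_i}\int_{\tilde U_i}\rho_i(\tilde x)\,\str\!\bigl(h\cdot \tilde K_t^i(\tilde x, h^{-1}\tilde x)\bigr)\,d\tilde x.$$

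The key analytic input is the local equivariant index theorem (Getzler rescaling, as in Chapter 6 of \cite{BGV}) applied to each $h\in H_i$: the $h$-summand concentrates as $t\to 0$ on the fixed-point submanifold $\tilde U_i^h$ and converges to the equivariant index density $\hat A_h(\tilde U_i)\,\ch_h(\tilde{\cE}_i/\tilde{\cS}_i)$, with the transverse Gaussian along the normal bundle producing the factor $[\det(1 - h\,e^{R^{\cN_h}/2\pi i})]^{-1/2}$. Since $\Str(e^{-t(\Dirac^{\cE})^2})$ is $t$-independent by McKean--Singer, this limit equals $\ind\Dirac^{\cE}$. Reassembling these chart-local contributions into a single integral over $I\f{X} = \bigsqcup_{(g)}\f{X}_{(g)}$ proceeds by identifying pairs $(\tilde x, h)$ with $h\tilde x = \tilde x$ as points of $I\f{X}$, modulo the equivalence defining $\cT_{\f{X}}$. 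Using the orbit--stabilizer identity $|H_i| = |Z_{H_i}(h)|\cdot|(h)_{H_i}|$, the prefactor $1/|H_i|$ combines with the sum over conjugacy-class representatives to produce exactly the weight $1/|Z(g)|$ dictated by orbifold integration on $\f{X}_{(g)}$; recognizing the limit integrand as the restriction of $\hat A_{deloc}(\f{X})\,\ch^{\cS}_{deloc}(\cE)$ to each twisted sector yields the stated formula, and the spin$^{c}$ case follows via the standard identification $\hat A\cdot\ch^{\cS}(\cS\otimes\cW) = \Td\cdot\ch(\cW)$ applied twisted-sector-wise with the automorphisms $\Phi_{(g)}$.

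The main technical obstacle is precisely this reassembly. Within a single chart $(\tilde U_i, H_i)$, nearby points carry strictly smaller local groups $H_{\tilde y}\subsetneq H_i$, and a single $H_i$-conjugacy class may split into several $H_{\tilde y}$-conjugacy classes while still representing one class in $\cT_{\f{X}}$. Verifying that the local Gaussian transverse factors glue across chart embeddings to define a globally well-defined differential form on $I\f{X}$ expressed through $R^{\cN_{(g)}}$ and $\Phi_{(g)}$ requires sustained use of the coherence condition (\ref{coherent}); this bookkeeping is the technical heart of \cite{Kawasaki:1981} and the modern reformulations such as \cite{Bunke:2007}.
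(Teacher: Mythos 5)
Your heat-kernel argument (McKean--Singer, chart-wise averaging of the Schwartz kernel as in~(\ref{local:kernel}), Getzler-rescaling localization at the fixed-point sets of each $h\in H_i$, and reassembly over local conjugacy classes with the orbit--stabilizer weights) is precisely the approach the paper indicates in the remark following Theorem~\ref{index:compact}, where it cites \cite{Kawasaki:1981} and the adaptation of the heat-kernel proof in \cite{BGV} and deliberately omits the details, deferring to its own more general treatment in Section~\ref{(g)-traceHeatOp}. You also correctly identify that the coherence bookkeeping across chart embeddings is where the real work lies, which matches what the paper itself delegates to the literature.
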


\begin{remark} One can adapt the proof of  Theorem 4.1 in \cite{BGV}  to compact orbifolds to establish the  local  index version of Theorem  \ref{index:compact}. It amounts to study the heat kernel asymptotics for $e^{-t(\Dirac^\cE)^2}$ on the compact orbifold.  We will apply the same  heat kernel approach to establish a more refined index formula for a discrete group action on a complete Riemannian orbifold, so we will not reproduce a proof here. 
\end{remark}

\section{Invariant Elliptic Operators on Complete Orbifolds}
\label{sec:EllipticPDO}

In this section, after a review of invariant elliptic pseudo-differential operators in Section~\ref{sec:InvElliPDO}, we prepare some analytical tools to study index of Dirac type operators on a complete even dimensional Riemannian  orbifold $\f{X}$, where a discrete group $G$ acts properly, co-compactly and isometrically.
When the orbifold is compact, the index of the Dirac operator $\Dirac^{\cE}$ is known to be calculated by the supertrace of the corresponding heat operator. 
However, when $\f{X}$ is not compact, the convergence of the heat kernel asymptotic expansion as $t\to0^+$ has to be modified accordingly.
This is worked out in Section~\ref{sec:DiracOp&HeatKernelAsym}. 
Also, the operator trace of $e^{-t(\Dirac^{\cE})^2}$ does not make sense. In Section~\ref{sec:cut-off(g)trace}, we introduce $(g)$-trace class operators and in Section~\ref{(g)-traceHeatOp} we compute the
$(g)$-trace  for the heat operator. 
On the one hand, these traces are related to the orbifold index of $\Dirac^{G\backslash \cE}$ on the quotient. 
On the other hand, as we shall see in Section~\ref{sec:Localized indices}, they are topological invariants for $\f{X}$ coming from the higher index for $\Dirac^\cE.$

\subsection{Invariant elliptic pseudo-differential operators}
\label{sec:InvElliPDO}

Let $\f{X}$ be a complete $n$-dimensional oriented Riemannian orbifold   and let $\cE$ be  a Hermitian orbifold vector bundle
with a Hermitian  connection $\nabla$. 
  Denote by $ \Gamma_c (\f{X}, \cE) \subset   \Gamma(\f{X}, \cE)$  the subspaces of  smooth sections of $\cE$
with  compact  support.   
A differential operator 
\[
D:   \Gamma_c (\f{X}, \cE) \longrightarrow   \Gamma(\f{X}, \cE)
\]
of order $m$ is a linear map such that for any orbifold chart $(\tilde U_i, H_i, \pi_i, U_i)$, the operator $D$ is locally represented by
\begin{equation*}
 \tilde D_i:  \Gamma  (\tilde U_i, \tilde\cE|_{\tilde U_i})^{H_i} \longrightarrow   \Gamma(\tilde U_i, \tilde\cE|_{\tilde U_i})^{H_i}, 
\end{equation*}
 an $H_i$-invariant differential operator  of order $m$.  
The completion of $ \Gamma_c (\f{X}, \cE)$ in the Sobolev $k$-norm 
\ba\label{k-norm}
\|\psi\|^2_k =\sum_{i=0}^k \int_{\f{X}}^{orb} | \nabla^i \psi |^2 d\vol_{\f{X}}
\na
is the Sobolev space  denoted by $L^2_k(\f{X}, \cE)$. Here $d\vol_{\f{X}}$ is the Riemannian volume element 
(the unique $n$-form of unit length) on
$\f{X}$  defined by the metric.      Then a differential operator 
\[
D:   \Gamma_c (\f{X}, \cE) \longrightarrow   \Gamma(\f{X}, \cE)
\]
of order $m$ extends to a bounded linear map $D: L^2_{k} (\f{X}, \cE) \to L^2_{k-m} (\f{X}, \cE)$
for all $k\geq m$.   
A  linear map $A:     \Gamma_c (\f{X}, \cE) \longrightarrow   \Gamma(\f{X}, \cE)$
is  called a  {\bf smoothing   operator}   if $A$ extends to a bounded linear map $A: L^2_{k} (\f{X}, \cE) \to L^2_{k+m} (\f{X}, \cE)$  for all $k$ and $m\geq 0$. The Sobolev embedding theorem implies that, for a smoothing 
operator $A$, we have
\[
A( L^2_{k} (\f{X}, \cE)  )\subset  \Gamma(\f{X}, \cE)
\]
for all  $k$.  

\begin{definition}[\bf Pseudo-differential operators  on an orbifold]
\label{def:PsiDO}
A  linear map
\[
D:  \Gamma_c (\f{X}, \cE) \longrightarrow \Gamma(\f{X}, \cE)
\]
is a 
 {\bf pseudo-differential operator }  of order $m$  if modulo smoothing operators,  for any orbifold chart  $\{( \tilde U_i, H_i, \pi_i, U_i)\}$,
 the operator $D$ is represented by  
 \begin{equation}
 \label{eq:OperatroLocal}
\tilde D_i :   \Gamma_c  (\tilde U_i, \tilde\cE|_{\tilde U_i})  \longrightarrow   \Gamma  (\tilde U_i, \tilde\cE|_{\tilde U_i}) 
\end{equation}
which is an  $H_i$-invariant pseudo-differential operator  of  order $m$. 
  A pseudo-differential operator $D$ is {\bf elliptic} 
 if  $\tilde D_i  $   is elliptic  for each
 orbifold chart  $(\tilde U_i, H_i)$.
 Denote by $\pdo_{orb}^m(\f{X}, \cE)$ the linear space  of  all pseudo-differential operators of order $m$ on $(\f{X}, \cE)$. \end{definition}

Due to the local nature of pseudo-differential operators, an operator
$D\in  \pdo_{orb}^m(\f{X}, \cE)$ has a well-defined  principal symbol of order $m$ which is an element in
\[
Sym^m_{orb}(\f{X}, \cE) / Sym^{m-1}_{orb}(\f{X}, \cE)
\]
where $Sym^m_{orb}(\f{X}, \cE)$ is the space of all sections of  order $m$ of the orbifold  vector bundle $\End(p^*\cE)$ over $T^*\f{X}$, here
$p: T^*\f{X} \to \f{X}$ is the obvious projection.  
Associated to a pseudo-differential operator   $D$ of  order  $m$,  the  Schwartz kernel 
\[
k  \in {\mathcal D}' (\f{X}\times \f{X}, \cE\boxtimes \cE^*)
\]
as a distributional-valued  section, is smooth off the diagonal. Here
\[
 \cE\boxtimes \cE^* =\pi_1^* \cE \otimes \pi_2^*\cE^*
 \]
with    the projections  $\f{X}\times \f{X} \to \f{X} $ to the first  factor and the second factor
respectively. As explained in \cite{Ma},  in terms of local orbifold chart $(\tilde U_i, H_i, \pi_i, U)_i$, there is a   distributional-valued  section
 \[
 \tilde k_D (\tilde x, \tilde y) \in \Gamma (\tilde U_i \times \tilde U_i, \tilde \cE_i\boxtimes \tilde \cE^*_i),
 \]
 which is the kernel for the representing operator $\tilde D_i$ in~(\ref{eq:OperatroLocal}), 
 such that
 \ba\label{local:kernel}
k_D (x, y) =  \sum_{h\in H_i} h \tilde k_D( \tilde x \cdot h^{-1}, \tilde y)
 \na
 where $\tilde x \in \pi_i^{-1} (x)$ and $\tilde y \in \pi_i^{-1} (y)$.  Here we assume that $H_i$ acts on  $\tilde U_i $ from the right.

A smoothing  pseudo-differential operator   has a smooth
kernel, that is  a smooth section $k(x, y)$ of the bundle $\cE\boxtimes \cE^* = \pi_1^*(\cE)  \otimes \pi^*_2 (\cE^*)$ over $\f{X} \times  \f{X}$   such that
\[
A\psi  (x) = \int^{orb}_{\f{X}} k(x, y) \psi(y) d\vol_{\f{X}}(y). 
\]

When $\f{X}$ is equipped with a proper,   co-compact and isometric  action of a discrete group
$G$,  we can define  $G$-invariant  pseudo-differential operators  on  $\cE$ in the usual sense.  Let $\cE$  be  an orbifold Hermitian vector bundle with a $G$-invariant   Hermitian  connection $\nabla$.  Consider a collection of  orbifold  charts  of $\f{X}$
\[
\{ (G\times _{G_i} \tilde U_i, H_i,  G\times _{G_i}   U_i)\}  
\]
as provided by  Lemma \ref{lem:quotient}. Here $\{(G\times _{G_i} \tilde U_i, H_i,  G\times _{G_i}   U_i)\}$ is a collection 
of orbifold charts indexed by  the coset space $G/G_i$.    We further assume that $ U_i$ is an open covering of a relatively  compact
open set $C$ in $|\f{X}|$ such that $\f{X} = G \cdot C$. A  $G$-invariant  pseudo-differential operator
\[
D_{\f{X}}: \Gamma_c (\f{X}, \cE) \longrightarrow \Gamma(\f{X}, \cE)
\]
 is represented by a  $G$-invariant  pseudo-differential operator on  the orbifold $ G\times _{G_i}   U_i$. 
Note that  the orbifold structure on  the quotient space $G \backslash |\f{X}|$ is defined by
the collection of  finitely many orbifold charts
\[
\{ ( \tilde U_i,  G_i\times H_i,   U_i)\}.
\]
Then a $G$-invariant pseudo-differential operator    $D_{\f{X}}$ of order $m$ on  $\f{X}$ defines a  pseudo-differential operator   of order $m$ on  the compact orbifold $G\backslash \f{X}$.  

\begin{remark}
\label{rek:GinvKernel}
Let $k(x,y) \in \cE_x \otimes  \cE^*_y$ be the distributional kernel of a $G$-invariant pseudo-differential operator $D_{\f{X}}$. Then we have 
\begin{equation}
\label{eq:GinvKernel}
g k(g^{-1}x, y)=k(x,gy)g\qquad \forall x, y\in\f{X}, \forall g\in G. 
\end{equation}
Here, $g$ stands for the action of $g\in G$ on  the smooth sections in $\Gamma(\f{X}, \cE).$ 
\end{remark}

We shall refer the pseudo-differential operators being studied are properly supported.
Recall that a {\bf properly supported} operator is an operator where the distributional kernel $k(x,y)$ satisfies the following property:  for all compact set $C\subset\f{X}$, the following set is compact in $\f{X}\times\f{X}:$
\begin{equation}
\label{eq:properSuppG-inv}
\{(x,y)\in\f{X}\times\f{X}| k(x,y)\neq0, x \text{ or } y\in C\}.
\end{equation}
If $S$ is a $G$-invariant properly sopported operator with Schwartz kernel $K_S$, then by Remark~\ref{rek:GinvKernel} and the co-compactness of the action, (\ref{eq:properSuppG-inv}) implies the existence of $R>0$ such that
\begin{equation*}
K_S(x,y)=0 \qquad \forall d(x,y)>R.
\end{equation*}
In view of Definition~\ref{def:PsiDO}, any pseudo-differential operator is properly supported up to a smoothing operator.

The following proposition is a key property for elliptic operators. When $\f{X}$ is compact, it implies that elliptic operator has the Fredholm index. In general, this proposition leads to  an elliptic operator model for $K$-homology and higher index.
As a pseudo-differential operator on an orbifold is locally defined on the orbifold charts, the proof are similar to the manifold case (\cite[Proposition~2.7]{W:2012}), there we assumed $D_{\f{X}}$ is properly supported and has order $0$. It is then sufficient to prove the proposition. In fact, any pseudo-differential operator of positive order can be normalized to an order $0$ operator and is properly supported up to a difference of a smoothing operator.

\begin{proposition}\label{prop:EllipticParametrix} Let $(D_{\f{X}})_+: L^2_{k}(\f{X}, \cE_+)\rightarrow L^2_{k-m}(\f{X}, \cE_-),$ where $k>m$, be a $G$-invariant elliptic operator with non-negative order $m$, then there exists a $G$-invariant parametrix $Q_{\f{X}}: L^2_{k-m}(\f{X}, \cE_-)\rightarrow L^2_{k}(\f{X}, \cE_+)$ of $(D_{\f{X}})_+$ so that $1-Q_{\f{X}}(D_{\f{X}})_+=S_0$ and $1-(D_{\f{X}})_+Q_{\f{X}}=S_1$ are smoothing operators.
\end{proposition}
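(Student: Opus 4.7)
First I would reduce to the case where $(D_{\f{X}})_+$ is properly supported and has order $0$. Any pseudo-differential operator agrees with a properly supported one modulo smoothing (by cutting off the Schwartz kernel away from a neighborhood of the diagonal), and a positive-order operator of order $m$ can be normalized by composing with the $G$-invariant operator $(1 + (D_{\f{X}})_-(D_{\f{X}})_+)^{-m/2}$; both reductions preserve $G$-invariance and, since smoothing operators form a two-sided ideal, a parametrix for the reduced operator yields one for the original.

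Next I would construct local parametrices on orbifold charts. By Lemma~\ref{lem:quotient}, $\f{X}$ is covered by finitely many $G$-slices $\{G\times_{G_i} U_i\}_{i=1}^N$ with underlying orbifold charts $(\tilde U_i, H_i, \pi_i)$. On $\tilde U_i$ the operator $(D_{\f{X}})_+$ is represented by an $H_i$-invariant elliptic pseudo-differential operator $\tilde D_i$ of order $0$, whose principal symbol is pointwise invertible. The standard symbol calculus on open subsets of $\R^n$, via principal-symbol inversion followed by a Borel-type asymptotic summation to kill all lower-order obstructions, produces a parametrix $\tilde Q_i$ with $\tilde D_i \tilde Q_i - 1,\ \tilde Q_i \tilde D_i - 1 \in \psinf$. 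Averaging $\tilde Q_i$ over the finite group $H_i$ preserves the parametrix property while making it $H_i$-invariant, and the result descends to a local parametrix $Q_i$ on $U_i$.

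To assemble these pieces into a global $G$-invariant parametrix, I would choose smooth compactly supported cut-off functions $\chi_i, \psi_i$ in $U_i$ with $\psi_i \equiv 1$ on $\supp\chi_i$, arranged so that
\[
\sum_{i=1}^N \sum_{g\in G/G_i} (g\chi_i) \equiv 1 \qquad \text{on }\f{X};
\]
such a $G$-invariant partition of unity exists by the properness and co-compactness of the $G$-action on $\f{X}$. Define
\[
Q_{\f{X}} \;:=\; \sum_{i=1}^N \sum_{g\in G/G_i} (g\psi_i) \cdot g Q_i g^{-1} \cdot (g\chi_i).
\]
By construction $Q_{\f{X}}$ is manifestly $G$-invariant; each summand is properly supported since $\chi_i, \psi_i$ have compact support in $U_i$; and the sum is locally finite thanks to co-compactness.

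Finally, I would verify that $S_0 = 1 - Q_{\f{X}}(D_{\f{X}})_+$ and $S_1 = 1 - (D_{\f{X}})_+Q_{\f{X}}$ lie in $\psinf$. Using the partition-of-unity identity, the $G$-invariance of $(D_{\f{X}})_+$ to commute it past each $g$, and the local parametrix relations on each chart, the remainders decompose into $G$-translated sums of locally smoothing contributions together with commutators $[(D_{\f{X}})_+, g\chi_i]$; the flanking cut-off $(g\psi_i)$ with $\psi_i \equiv 1$ on $\supp\chi_i$ forces these commutator terms to be smoothing via the pseudolocal property of pseudo-differential operators (iterated if needed to pick up arbitrarily negative order). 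The main obstacle is the careful bookkeeping of these smoothing remainders while simultaneously preserving $G$-invariance and proper support, and establishing that the infinite sum over $G$-translates converges in the smoothing topology; both of these issues are resolved precisely by the co-compactness of the $G$-action and the finiteness of the covering furnished by Lemma~\ref{lem:quotient}, which together render the sum locally finite.
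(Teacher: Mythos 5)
The paper's own treatment of this proposition is terse: it reduces to the properly supported, order-$0$ case and then defers to the manifold argument in \cite[Prop.~2.7]{W:2012}, observing that an orbifold pseudo-differential operator is locally given on orbifold charts. Your proposal is a fleshed-out version of that same route: local parametrix on each orbifold chart, $H_i$-averaging, and a $G$-equivariant patching sum. So the strategy is not different from the paper's, it just carries out what the paper cites.

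Two points need repair. First, the order-normalization should use $\bigl(1 + (D_{\f{X}})_-(D_{\f{X}})_+\bigr)^{-1/2}$: since $(D_{\f{X}})_-(D_{\f{X}})_+$ has order $2m$, its $-\tfrac12$ power has order $-m$, so the composite is order $0$. The exponent $-m/2$ in your write-up yields order $-m^2$, which is wrong for every $m\neq 1$. Second, the flanking cut-off $\psi_i\equiv 1$ on $\supp\chi_i$ does \emph{not} force the commutator terms $[(D_{\f{X}})_+, g\chi_i]$ to be smoothing: these commutators have symbol supported where $\nabla\chi_i\neq 0$, which sits inside $\supp\chi_i$ where $\psi_i$ is identically $1$, so disjoint-support pseudolocality buys nothing there. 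After patching you only gain one order, i.e.\ $1 - Q_{\f{X}}(D_{\f{X}})_+$ has order $-1$ rather than $-\infty$. The upgrade to a genuine parametrix is the standard Neumann-series correction $Q_{\f{X}}' := Q_{\f{X}}\sum_{j\ge 0}\bigl(1 - (D_{\f{X}})_+Q_{\f{X}}\bigr)^j$ (asymptotically summed), which remains $G$-invariant and properly supported by local finiteness; this is what your parenthetical ``iterated if needed'' should say explicitly. With those repairs your proof is correct and matches the paper's intent.
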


\subsection{Dirac operator and heat kernel asymptotics}
\label{sec:DiracOp&HeatKernelAsym}

In this paper, we focus on Dirac operators, which are first order  elliptic  differential operators. 
Let  $\f{X}$ be a complete  even-dimensional Riemannian orbifold. 
Let $\nabla^{T\f{X}}$ be the Levi-Civita connection on $T\f{X}$.  Given  a
  Dirac bundle   $\cE = \cE_+ \oplus \cE_-$ with a Clifford connection 
$\nabla^\cE$. The Dirac operator
\begin{equation}
\label{eq:FormallySelfAdjointD}
\Dirac^\cE =  \left[ \begin{matrix}
0 &\Dirac^\cE_- \\
\Dirac^\cE_+  &0  \end{matrix}\right]:  L^2(\f{X}, \cE) \longrightarrow  L^2(\f{X}, \cE), 
\end{equation}
with domain  $\Gamma_c(\f{X}, \cE)$  is an essentially self-adjoint  elliptic differential operator.  Moreover, we have the Lichnerowicz formula for $(\Dirac^\cE )^2$
as follows.
\ba\label{D^2}
(\Dirac^\cE )^2 = \Delta^\cE + c(F^{\cE/\cS}) + \dfrac 14 r_{\f{X}} 
\na
where $ \Delta^\cE $ is the Laplace operator on $\cE$, $ c(F^{\cE/\cS})$ is the Clifford action of
the twisted curvature of the  Clifford connection  $\nabla \cE$  and 
$r_{\f{X}}$ is the scalar curvature of $\f{X}$. We remark that the proof of the  essentially self-adjointness of $\Dirac^\cE$ and
the Lichnerowicz formula for $(\Dirac^\cE )^2$ follow from the proof in the case of  smooth manifolds without essential change.
Then the heat operator 
\[
e^{-t(\Dirac^\cE )^2}:    L^2(\f{X}, \cE) \longrightarrow  L^2(\f{X}, \cE)
\]
 is a one -parameter semi-group of operators consisting of positive, self-ajoint operators of norm $\leq 1$,  satisfying  the following properties, for $\psi \in  L^2 (\f{X}, \cE)$, $t\geq 0$,
 \begin{enumerate}
\item $\left( \dfrac{d}{dt}+ (\Dirac^\cE )^2\right)  e^{-t(\Dirac^\cE )^2} \psi = 0$. \\[2mm]
\item $   \lim_{t\to 0}  e^{-t(\Dirac^\cE )^2} \psi = \psi$ in $ L^2 (\f{X}, \cE)$. 
\end{enumerate}
Moreover, $e^{-t(\Dirac^\cE )^2} $ is a smoothing operator. By the Schwartz kernel theorem, there is a kernel 
\[
K_t(x,  y) \in \Gamma(\f{X} \times \f{X}, \cE\boxtimes \cE^*),
\]
called the heat kernel of $\Dirac^\cE $ with respect to $d\vol_{\f{X}}$.   For $\psi \in  L^2(\f{X}, \cE)$,
\[
(e^{-t(\Dirac^\cE )^2}  \psi )  (x) =\int_{\f{X}}^{orb} K_t(x,  y)  (\psi (y)) d\vol_{\f{X}}(y). 
\]
When $\f{X}$ is compact, that is, $|\f{X}|$ is compact, then we have the following  asymptotic expansion: for each Riemannian orbifold chart $(\tilde U_i, H_i, U_i)$, using the notations in (\ref{local:kernel}),  there exists a smooth section $u_j$ of  $\cE\boxtimes \cE^*$ over $\tilde U_i \times \tilde U_i$ such that for
every $l> n=\dim \f{X}$ and $x, y\in U_i$, as $t\to 0^+$,
\ba\label{asym:1}\begin{array}{lll}
\qquad K_t(x, y) &= &  (4\pi t)^{-n/2} d\frac{1}{|H_i|} \sum_{h\in H_i}  e^{-\frac{\tilde d(\tilde x, \tilde y)^2}{4t} }   \sum_{j=0}^l t^j  h u_j (\tilde x \cdot h^{-1},
\tilde y)  \\[2mm]
&& + O(t^{l- n/2}).\end{array}
\na
Here  $\tilde d(\tilde x, \tilde y)$ is the distance function on $\tilde U_i$ defined by the Riemannian metric on $\tilde U_i$. 
Moreover, the off-diagonal estimate is given
by
\[
K_t(x, y) = O( e^{-a^2/4t})
\]
as $t\to 0$,  for  $(x, y) \in | \f{X}|  \times |\f{X}| $ with $d (x, y) >a > 0$. Here $d(x, y)$ is the distance function defined by the
 Riemannian metric on $\f{X}$.  See \cite{Ma} and Chapter 5.4 in \cite{MaMa} for  relevant discussions.  The local index
 technique as \cite{BGV} and \cite{MaMa} gives rise to a local index formula for the Kawasaki orbifold index theorem (Theorem~\ref{index:compact}). In addition, it worth to mention that the Geiner-Hadamard approach to the heat kernel asymptotic in~\cite{Po:CMP, PW:NCGCGI.PartII} gives rise to a concise proof for the heat kernel asymptotic in this setting and leads to an elementary calculation deriving Theorem~\ref{index:compact}.

When $\f{X}$ is a   complete  even-dimensional Riemannian  orbifold equipped with  a proper, co-compact and isometric action of a 
discrete group $G$.    Let $\nabla^{T\f{X}}$ be the $G$-invariant Levi-Civita connection on $T\f{X}$. Then
the scalar curvature $r_{\f{X}}$  is a $G$-invariant function on $\f{X}$.   
Given  a
$G$-equivariant Dirac bundle   $\cE = \cE_+ \oplus \cE_-$ with a $G$-invariant Clifford connection 
$\nabla^\cE$, then the Dirac operator
\[
\Dirac^\cE:   L^2(\f{X}, \cE) \longrightarrow  L^2(\f{X}, \cE)
\]
is a $G$-invariant elliptic differential operator.  The corresponding Dirac operator  on $G\backslash \f{X}$ 
is denoted by
\[
\Dirac^{G\backslash \cE}:   L^2(G\backslash \f{X}, G\backslash \cE) \longrightarrow  L^2(G\backslash \f{X}, G\backslash \cE).
\]

\begin{theorem}
\label{thm:HeatKernelAsympMain}
 Let $\f{X}$ be  a   complete  even-dimensional Riemannian orbifold with  a proper, co-compact and isometric action of a 
discrete group $G$. 
Let $K_t(x, y)$ be the heat kernel of  a $G$-invariant elliptic differential operator $\Dirac^\cE$,  that is, the Schwartz kernel of   the semigroup 
$e^{-t(\Dirac^\cE )^2}$.  Then  $K_t(x, y)$ is a $G$-invariant   smooth kernel with 
 an asymptotic expansion, as $t\to 0^+$,
\ba\label{asymp:exp}
K_t(x, y) \sim (4\pi t)^{-n/2}  e^{-\frac{d(x, y)^2}{4t}} \sum_{j=0}^\infty t^i u_j (x, y)
\na
on a sufficiently small neighbourhood of the diagonal in $\f{X}\times \f{X}$. Here
$u_j(x, y)$ is a smooth section of $\cE\boxtimes \cE^*$ in the sense of (\ref{local:kernel}). 
Moreover,  choose  a relative compact sub-orbifold  $C$ 
in $\f{X}$ with $\f{X} = G\cdot C$.   Let $\pi: C \to G \backslash  \f{X}$ be the natural map  defined by the quotient
map $\f{X} \to G \backslash  \f{X}$. Then the series 
\[
\sum _{g\in G}  g  K_t  (g^{-1}  \cdot x,   y)   =  \sum_{g\in G}  K_t  (x, g\cdot y) g \in \cE_x\otimes \cE^*_y
\]
converges uniformly on $[t_1, t_2] \times C \times C$ to the kernel 
$\overline{K}_t (\bar x, \bar y)$ of $e^{-t(\Dirac^{G\backslash \cE})^2}$ for $\bar{x} = \pi(x)$ and 
$\bar{y} = \pi(y)$. Here $g$ acts on a section $\psi\in L^2(\f{X}, \cE)$ by $(g \cdot \psi)(y) = g \psi(g^{-1}\cdot  y)$. 
\end{theorem}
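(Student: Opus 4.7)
The plan is to establish the theorem in four steps, starting from general semigroup theory and narrowing to the specific identification with the downstairs heat kernel.

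First, I would construct the smooth kernel $K_t(x,y)$ and verify its $G$-invariance. Since $\f{X}$ is complete, a Chernoff-type argument applied to the first-order symmetric hyperbolic system for $\Dirac^{\cE}$ (handled locally on orbifold charts, where the lift $\tilde\Dirac^{\cE}$ is an ordinary Dirac operator) shows that $\Dirac^{\cE}$ is essentially self-adjoint on $\Gamma_c(\f{X},\cE)$. The spectral theorem then defines $e^{-t(\Dirac^{\cE})^2}$, and the local parabolic regularity on each chart shows that this operator is smoothing with smooth Schwartz kernel $K_t$. Because the unitary $G$-action on $L^2(\f{X},\cE)$ commutes with $\Dirac^{\cE}$, it commutes with the semigroup, and hence $K_t$ satisfies the twisted invariance of Remark~\ref{rek:GinvKernel}.

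Second, I would establish the diagonal asymptotic~(\ref{asymp:exp}). This is a purely local statement, so on a fixed orbifold chart $(\tilde U_i, H_i, \pi_i)$ I use the formula~(\ref{local:kernel}) to write $K_t$ as a sum of translates of the upstairs heat kernel $\tilde K_t(\tilde x, \tilde y)$ on $\tilde U_i$. The standard Minakshisundaram--Pleijel expansion for $\tilde K_t$ (as in~\cite{BGV}) supplies the sections $u_j$, and restricting to a neighborhood of the diagonal of radius less than half the minimum of $\tilde d(\tilde x, \tilde x\cdot h^{-1})$ over $h\neq e$ ensures that only the $h=e$ term contributes modulo an $O(e^{-a^2/4t})$ remainder.

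Third, and this is the main analytic step, I would prove the uniform convergence of $\sum_{g\in G} K_t(x,gy)g$ on $[t_1,t_2]\times C\times C$. The two ingredients are a Gaussian off-diagonal bound
\begin{equation*}
|K_t(x,y)|\leq c_1 t^{-n/2}\exp\bigl(-d(x,y)^2/(c_2 t)\bigr),\qquad t\in[t_1,t_2],
\end{equation*}
and a polynomial estimate $\#\{g\in G : d(x,gy)\leq R\}\leq c_3(1+R)^N$, uniform in $x,y\in C$. The first bound follows from finite propagation speed for the wave equation of $\Dirac^{\cE}$ (a local property that passes from local manifold covers to $\f{X}$) combined with the spectral representation
\begin{equation*}
e^{-t\lambda^2}=\frac{1}{\sqrt{4\pi t}}\int_{\R}e^{-s^2/4t}\cos(s\lambda)\,ds,
\end{equation*}
while the second follows from properness and cocompactness of the $G$-action, which forces $gC$ to meet a metric ball of radius $R$ with frequency controlled by $\Vol(B_{R+\diam C})$. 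Combining these gives a convergent majorant, and the same estimate applied to $\nabla^k K_t$ and $\partial_t^k K_t$ produces uniform convergence together with all spatial and temporal derivatives.

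Fourth, the limit $\tilde K_t(x,y):=\sum_{g\in G}K_t(x,gy)g$ descends to a smooth section on $(G\backslash\f{X})\times(G\backslash\f{X})$ by the twisted invariance~(\ref{eq:GinvKernel}). Termwise differentiation, justified by the uniform convergence of Step~3, shows that $\tilde K_t$ satisfies $(\partial_t+(\Dirac^{G\backslash\cE})^2)\tilde K_t=0$, and the local behaviour of $K_t$ from Step~2 implies $\tilde K_t$ approaches the $\delta$-kernel on the diagonal of $G\backslash\f{X}$ as $t\to 0^+$. By uniqueness of the heat kernel on the compact quotient orbifold, $\tilde K_t=\overline K_t$. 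The principal obstacle is Step~3: although finite propagation speed transfers easily from manifold covers to orbifolds, one must verify that the Gaussian constants $c_1,c_2$ can be chosen uniformly over $\f{X}$, which in turn relies on the bounded geometry coming from cocompactness of the $G$-action. Once this uniformity is in place, the remaining pieces---polynomial orbit counting, the local expansion, and descent to the quotient---are routine.
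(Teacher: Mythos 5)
The gap is in Step~3: the orbit-counting estimate $\#\{g\in G : d(x,gy)\leq R\}\leq c_3(1+R)^N$ is false for a general discrete group $G$ acting properly cocompactly. Your own justification correctly controls this count by $\Vol(B(x,R+\diam C))$, but the volume of a metric ball in a complete Riemannian orbifold with sectional curvature bounded below (which is all that bounded geometry gives you) grows at most \emph{exponentially}, not polynomially --- and for cocompact lattices in hyperbolic space, for instance, it really does grow exponentially. This is precisely the bound the paper uses: by a Bishop--Gromov comparison, $\vol(B(x,r))\leq c_3 e^{c_4 r}$, giving $n(i)\leq \frac{m c_3}{\vol(C)} e^{2c_4 ir}$ translates in the annulus of radius $ir$. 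The series still converges because the Gaussian decay $e^{-(i-1)^2 r^2/4t}$ is quadratic in $i$ while the orbit growth is only simply exponential, so the exponent $-(i-1)^2 r^2/4t + 2c_4 ir$ goes to $-\infty$. Replace your polynomial claim with the exponential one and Step~3 goes through unchanged.

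Aside from this, your outline tracks the paper closely in structure (smoothing kernel and $G$-invariance, local diagonal expansion via the sum over the local isotropy groups, off-diagonal Gaussian estimate plus orbit-counting to sum the series, then uniqueness of the heat kernel on the compact quotient to identify the limit). One genuine difference of route: you derive the Gaussian off-diagonal bound from finite propagation speed of the wave equation combined with the spectral representation of $e^{-t\lambda^2}$, whereas the paper instead invokes a fiberwise pointwise estimate $\|\partial_t^i \nabla_x^j \nabla_y^k K_t(x,y)\|\leq c_1 t^{-n/2-i-j/2-k/2}e^{-d(x,y)^2/4t}$ built from the parametrix construction. Your route via finite propagation speed is a clean alternative and has the advantage of giving the uniformity of the constants for free on compact $t$-intervals; you are right that this uniformity needs to be verified and that bounded geometry from cocompactness is what supplies it.
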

\begin{proof}  In the case of smooth manifolds, this theorem is due to Donnelly \cite{Do}.    For convenience, we use the same notation $(\Dirac^\cE )^2$ to denote its unique self-adjoint
extension.    By 
 Lichnerowicz formula,  $(\Dirac^\cE )^2$ is a generalized Laplacian operator 
 \[
  \Delta^\cE + c(F^{\cE/\cS}) + \dfrac 14 r_{\f{X}} 
  \]
  with $c(F^{\cE/\cS}) + \dfrac 14 r_{\f{X}} $ is bounded from below due to the isometric co-compact action of $G$ on $\f{X}$.  Then by 
  Theorem D.1.2 in \cite{MaMa} and  the standard functional calculus, 
  \[
  e^{-t(\Dirac^\cE )^2}:  L^2(\f{X}, \cE) \longrightarrow  L^2(\f{X}, \cE),
  \]
   is a smoothing operator and   the heat kernel $K_t(x, y) $ is a smooth section  of $\cE\boxtimes \cE^*$ in $x, y \in \f{X}$ and $t\in (0, \infty)$.  

 The asymptotic expansion follows from the uniqueness  of the fundamental solution to the heat equation 
 on $(0, \infty) \times \f{X} \times \f{X}$, 
\[
\left( \dfrac{\partial }{\partial t} + (\Dirac_y^\cE )^2 \right) K_t(x, y) = 0
\]
 where $(\Dirac_y^\cE )^2 $ acts in the second variable, with the following  initial boundary condition at $t=0$:  if $\psi$ is a smooth section of
$\cE$, then 
\[
\lim_{t\to 0} \int_{\f{X}}^{orb}   K_t(x, y) \psi(y) d\vol_{\f{X}}(y)  =\psi(x) 
\]
in the $L^2$-norm or the uniform norm on any compact set in $\f{X}$.    It is done by building approximate heat kernel in  normal
orbifold coordinate charts as the asymptotic expansion in question  holds on a sufficiently small neighbourhood of the diagonal in $\f{X}\times \f{X}$.  Then the asymptotic expansion (\ref{asymp:exp})  follows from the  construction in \cite[Chapter 2]{BGV}  and \cite{CGT} carried over to the case of orbifolds. Moreover, for any $T>0$ and $0< t\leq T$, we have
the following fiberwise norm estimate
\[
\|( \dfrac{\partial}{\partial t})^i (\nabla^\cE_x)^j ( \nabla^\cE_y)^k    K_t(x, y)   \| \leq c_1  t^{-n/2 -i-j/2-k/2}  e^{-\frac{d(x, y)^2}{4t}}.
\]

To check that the series $\sum_{g\in G}  K_t  (x, g\cdot y) \circ (1, g^{-1})$ converges uniformly on $[t_1, t_2] \times C \times C$, we need to
estimate the number 
\[
n (i)  =  \# \{ g \in G    |  (i-1)r \leq    d(x, g\cdot y)   <  i r \}
\]
  for $i\in \N$,   $x, y \in C$ with $   r >  diam (C)$. As the $G$-action is proper, let 
  \[
m=   \# \{g  \in G| (  g\cdot C ) \cap C \neq \emptyset\}. 
  \]
  
  Suppose  that $B(x, r)$ contains $p_r$ $G$-translates of $y$, that is,
  \[
  p (r) =\#\{ g    |  d(x, g\cdot y) <r\}.
  \]
  Then $B(x, 2r)$ contains  $p(r)$  $G$-translates of $C$ as $r> diam (C)$.  Any point of $\f{X}$ is contained
  in at most $m$ translates of $C$, so we have
  \[
  p(r)  \cdot \vol(C) \leq m \cdot \vol(B(x, 2r)).
  \]
  Note that  $G$ acts on $\f{X}$ 
isometrically and  co-compactly, so the sectional curvatures of $\f{X}$ is bounded from below. By comparing with a space of constant curvature ({\em cf.}  \cite[Proposition 20]{Bo} and \cite[p.256 Corollary 4]{BiCr}), we have 
\[
  \vol(B(x,  r)) \leq c_3 e^{c_4 r}. 
\]
This implies that $ n(i) <  p(ir) < \dfrac{m c_3 e^{2c_4 ir}}{\vol (C)}$ for any $i\in \N$. 
Then by partitioning $G$ into subsets 
\[
G (i)  =   \{
  g \in G    |  (i-1)r \leq    d(x, g\cdot y)   <  i r \}, 
\]
for $i\in \N$, we have the following operator norm estimate 
\[\begin{array}{lll}
&& \disp{\sum_{g\in G} }  \| K_t  (x, g\cdot y) g\| \\[2mm]
 &  \leq &  c_1 t^{-n/2}\disp{ \sum_{g\in G}} e^{-\frac{d^2(x, g\cdot y)} {4t} }
 \\[2mm]
 & =  &  c_1 t^{-n/2} \disp{\sum_{i=1}^\infty  \sum_{g\in G (i)} } e^{-\frac{ -(i-1)^2 r^2} {4t} }\\[2mm]
 &  \leq &  c_1 \disp{t^{-n/2} \sum_{i=1}^\infty n_i  }  e^{-\frac{ -(i-1)^2 r^2} {4t} }\\[2mm]
 &  \leq & \disp{  \dfrac{m c_1c_3  }{\vol (C)} t^{-n/2} \sum_{i=1}^\infty e^{ 2c_4 i r}}   e^{-\frac{-(i-1)^2 r^2} {4t} } 
 \\[3mm]
 & =  &  \disp{ \dfrac{m c_1c_3  }{\vol (C)} t^{-n/2} \sum_{i=1}^\infty }  e^{-\frac{-(i-1)^2 r^2 +2c_4 i r} {4t} },
\end{array}\]
which  converges uniformly on $[t_1, t_2] \times C \times C$. 
By the similar argument, we have uniform 
convergence for all the derivatives of the series. One can check  that 
 \[
\overline{K}_t (\bar x, \bar y)   = \sum_{g\in G}  K_t  (x, g\cdot y) g
 \] 
  is a fundamental solution to the
  heat equation for  $(\Dirac^{G\backslash \cE})^2$ on $G\backslash \f{X}$.  
  By the uniqueness of the  heat kernel, we know that 
the series 
\[
\sum_{g\in G}  K_t  (x, g\cdot y) g
\]
converges uniformly on $[t_1, t_2] \times C \times C$ to the heat kernel of $\Dirac^{G\backslash \cE}$.  
\end{proof}

As $K_t$ is $G$-invariant and $C$ has nonempty intersection with each orbit, the proof of Theorem~\ref{thm:HeatKernelAsympMain}, gives rise to an estimate on the heat kernel $K_t(x,y)$, which is important  for later calculations. 
\begin{corollary}
\label{cor:UniformUpperBdSumK(x,gx)}
There exists $L>0$ such that, for all $x\in\f{X},$ we have
\begin{equation}
\label{eq:UniformUpperBdSumK(x,gx)}
 \sum_{g\in G}   \| K_t  (x, g\cdot x) g\|_{\cE_x}\le L.
\end{equation}
\end{corollary}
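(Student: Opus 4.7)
The plan is to leverage the $G$-invariance of $\Dirac^{\cE}$ to reduce the estimate for an arbitrary $x \in \f{X}$ to the estimate on the relatively compact set $C$, where the proof of Theorem~\ref{thm:HeatKernelAsympMain} already supplies uniform convergence of the series $\sum_{g \in G} K_t(x, gy) g$.

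First, since $\f{X} = G \cdot C$, any $x \in \f{X}$ may be written as $x = g_0 \cdot x_0$ with $g_0 \in G$ and $x_0 \in C$. By Remark~\ref{rek:GinvKernel}, the $G$-invariance of $\Dirac^{\cE}$ translates into the kernel identity $K_t(g_0 y_1, g_0 y_2) = g_0 K_t(y_1, y_2) g_0^{-1}$; combining this with the substitution $h = g_0^{-1} g g_0$ yields
\[
K_t(x, g\cdot x)\, g \;=\; g_0\, K_t(x_0, h \cdot x_0)\, h\, g_0^{-1}.
\]
Since the $G$-action is isometric and preserves the Hermitian metric on $\cE$, the map $g_0 : \cE_{x_0} \to \cE_{x}$ is unitary, so fiberwise operator norms are invariant under the conjugation by $g_0$. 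As $h \mapsto g_0 h g_0^{-1}$ is a bijection of $G$, summing gives
\[
\sum_{g \in G}\|K_t(x, g\cdot x)\, g\|_{\cE_x} \;=\; \sum_{h \in G}\|K_t(x_0, h\cdot x_0)\, h\|_{\cE_{x_0}}.
\]

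It therefore suffices to prove $\sup_{x_0 \in C}\sum_{g \in G}\|K_t(x_0, g\cdot x_0)\, g\|_{\cE_{x_0}} < \infty$. But this is immediate from the proof of Theorem~\ref{thm:HeatKernelAsympMain}: the series $\sum_{g \in G} K_t(x, g\cdot y) g$ was shown there, via the Gaussian off-diagonal estimate and the volume-growth comparison bound $n(i) \le \tfrac{m c_3}{\vol(C)} e^{2c_4 i r}$, to converge uniformly on $[t_1, t_2] \times C \times C$. Restricting to the compact diagonal $\{(x_0, x_0) : x_0 \in C\}$ at the fixed parameter $t$, the partial sums (in fact, in operator norm, not merely as a distributional kernel) are dominated by some constant $L > 0$, which finishes the proof.

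The only real subtlety is bookkeeping: one must track carefully how the action $g$ composes on the right of $K_t(x, g\cdot x)$ and confirm that the unitarity of the fiberwise $G$-action cancels the conjugation by $g_0$ without introducing spurious factors. There is no further analytic difficulty, since all the hard estimates on off-diagonal decay of $K_t$ and on the growth of the number of lattice translates were already worked out in the proof of Theorem~\ref{thm:HeatKernelAsympMain}; the corollary merely extracts the spatially-uniform statement obtained by combining those estimates with the reduction to a fundamental compact set.
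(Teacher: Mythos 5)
Your proof is correct and is exactly the argument the paper has in mind: the paper justifies the corollary in one sentence by invoking the $G$-invariance of $K_t$ and the fact that $C$ meets every orbit, and your write-up spells out this reduction (writing $x = g_0 x_0$ with $x_0 \in C$, conjugating the kernel and re-indexing the sum by $h = g_0^{-1} g g_0$, and then appealing to the absolute-norm bound on $C\times C$ already established inside the proof of Theorem~\ref{thm:HeatKernelAsympMain}). The bookkeeping you flag — that $K_t(x,gx)\,g = g_0\,[K_t(x_0,hx_0)\,h]\,g_0^{-1}$ with $g_0$ unitary — checks out against the identity of Remark~\ref{rek:GinvKernel}.
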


\subsection{Cut-off functions and $(g)$-trace class operators}
\label{sec:cut-off(g)trace}
We aim to define some meaningful traces for the heat kernel operator $e^{-t(\Dirac^\cE)^2}$.
We shall first introduce a class of operators on $\f{X}$ ``approximating" the heat kernel operator.

\begin{definition}
\label{def:SchwartzClass}
Let $\mathbf{S}$ be the algebra of bounded operators on $L^2(\f{X}, \cE)$ having the following properties:
\begin{enumerate}
\item The Schwartz kernel $K_S$ of $S\in \mathbf{S}$ is smooth and $G$-invariant, in particular, 
\begin{equation*}
K_S(gx, gy)=g[K_S(x, y)]g^{-1}\qquad \forall x, y\in \f{X}, \forall g\in G.
\end{equation*}
\item The kernel $K_S$ for $S\in\mathbf{S}$ is properly supported, in the sense of~(\ref{eq:properSuppG-inv}).
\end{enumerate}
\end{definition}

Because of condition~(1) in Definition~\ref{def:SchwartzClass}, the usual trace of $S\in \mathbf{S}$ might be infinite. To eliminate the repeated summations caused by the $G$-invariance, we make use of the following ``weight" function over $\f{X}$.

Using Lemma~\ref{lem:quotient} on the local structure of the $G$-orbifold $\f{X}$,  let  $\cG=(\cG_1\rightrightarrows \cG_0)$ be   the corresponding proper \'etale grouppoid which locally looks like ({\em cf.} Remark~\ref{rem:OrbifoldGroupoidStructure})
\begin{equation*}
(G\times_{G_i}\tilde{U_i})\rtimes H_i\rightrightarrows G\times_{G_i}\tilde{U_i}
\end{equation*}
and naturally admits a $G$-action on the left.  Recall a smooth function $f$  on $\f{X}$ corresponds to a smooth
$\cG$-invariant function on $\cG_0$, that is, $s^* f = t^*f $.
  A smooth function $f$  on $\f{X}$ defines a unique continuous function of $|\f{X}|$ which will also be denoted by $f$.   A function on $\f{X}$ is called compactly supported if its support in $|\f{X}|$ is compact.

\begin{definition}[\bf Cut-off function]
\label{def:cut-offFunction}
A nonnegative function $c\in C_c^{\infty}(\f{X}) =  C_c^{\infty}(\cG)$ is called a {\bf cut-off function} of $\f{X}$ associated to the $G$ action if the values over the orbits of $G$-action add up to be $1$, i.e.,  
\begin{equation}
\label{eq:cut-offFunctionOnG_0}
 \sum_{g\in G} c(g^{-1}x)=1, \quad \forall x\in |\f{X}|.
\end{equation}
\end{definition}

Notice that the action of the cut-off function $c\in C^{\infty}_c(\f{X})$ on $f\in \Gamma(\f{X}, \cE)$ is given by point-wise multiplication.
\begin{equation}
\label{eq:Actioncut-offFucntion}
[c\cdot f](x)= c(x) f(x) .
\end{equation}

\begin{remark}
A cut-off function always exists for a proper co-compact $G$-action on $\f{X}.$
In fact, let $h\in C_c^{\infty}(\f{X})$ be a nonnegative function whose support has nonempty intersection with each $G$-orbit, then 
\begin{equation*}
c(x)=\frac{h(x)}{\sum_{g\in G} h(g^{-1}x)} 
\end{equation*}
is a cut-off function.
\end{remark}

We shall construct one particular cut-off function $c$ on $\f{X}$, where we suppose 
\begin{equation*}
| \f{X}| =\bigcup_{i=1}^N G\times_{G_i}U_i,
\end{equation*} 
and  each $U_i$ has an orbifold chart $(\tilde U_i, H_i, \pi_i, U_i)$. 

Let $\{\bar\phi_i\}$ be a partition of unity of $G\backslash \f{X}$ subordinate to the open
cover
\[
\{ V_i = G_i\backslash U_i = G_i \backslash  \tilde U_i /H_i\}_{i=1}^N
\]
such that the lift of $\bar\phi_i$ to $\tilde U_i$ is a smooth $G_i\times H_i$-invariant function, denoted by $\tilde \phi_i$.

We use $[x]$ to denote the equivalence class of an element $x$.
For example, if $g\in G$ and $u\in U_i$, then $[g, u] \in G\times_{G_i} U_i$ and $[u] \in  G_i\backslash U_i\subset |G\backslash \f{X}|$.
Let $\varphi_{i}$ be a function on $G\times_{G_i}U_i\subset\f{X}$ given by 
\begin{equation*}
\varphi_{i}([g,u]) =\begin{cases} \bar\phi_i ([u])  & g\in G_i \\ 0 & g\notin G_i.\end{cases}
\end{equation*}
As $\bar\phi_i$ is $G_i$-invariant, $\varphi_i$ is well-defined. Also, $\varphi_i$ extends to a smooth
$G_i$-invariant  function on $\f{X}$ when setting $\varphi_i(x)=0$ for $x\notin G\times_{G_i}U_i.$

Define a smooth nonnegative function on $\f{X}$ by
\begin{equation}
\label{eq:Particularcut-off}
c(x):=\sum_{i=1}^N\frac{1}{|G_i|}\varphi_i(x).
\end{equation} 
It is compactly supported because each summand is compactly supported.

\begin{lemma}
\label{lem:Specialcut-off}
The non-negative function $c\in C_c^{\infty}(\f{X})$ given by~(\ref{eq:Particularcut-off}) is a cut-off function of $\f{X}.$ 
\end{lemma}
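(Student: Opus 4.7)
The goal is to verify the identity $\sum_{g\in G} c(g^{-1}x) = 1$ for all $x \in |\f{X}|$. Since $c$ is defined as a weighted sum of the $\varphi_i$, my plan is to compute $\sum_{g\in G}\varphi_i(g^{-1}x)$ for each fixed $i$ and then combine with the fact that $\{\bar\phi_i\}$ is a partition of unity on $G\backslash\f{X}$.

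First, I would fix $i$ and unpack the definition of $\varphi_i$. Because $\varphi_i$ is supported in $G\times_{G_i}U_i$, which is a $G$-invariant open subset of $\f{X}$, the sum is $0$ when $x\notin G\times_{G_i}U_i$; note that in this case $\bar{x}\notin V_i$, so $\bar\phi_i(\bar x)=0$ as well. Hence I may assume $x\in G\times_{G_i}U_i$ and write $x=[h,u]$ with $h\in G$, $u\in U_i$, the representative being unique up to the equivalence $(h,u)\sim (hg_i^{-1},g_i\cdot u)$ for $g_i\in G_i$.

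Next, the key computation: $g^{-1}x=[g^{-1}h,u]$, and $\varphi_i([g^{-1}h,u])\ne 0$ precisely when $g^{-1}h\in G_i$, i.e.\ when $g\in hG_i$. For such a $g$, writing $g=hg_i'$ with $g_i'\in G_i$, I use the equivalence relation to rewrite $[g^{-1}h,u]=[(g_i')^{-1},u]=[e,(g_i')^{-1}\cdot u]$, so that
\[
\varphi_i(g^{-1}x)=\bar\phi_i\bigl([(g_i')^{-1}\cdot u]\bigr)=\bar\phi_i([u])
\]
by the $G_i$-invariance of $\bar\phi_i$ on $U_i$. Summing over the $|G_i|$ elements of the coset $hG_i$ gives
\[
\sum_{g\in G}\varphi_i(g^{-1}x)=|G_i|\,\bar\phi_i([u])=|G_i|\,\bar\phi_i(\bar x),
\]
where the last equality uses that the image of $x=[h,u]$ in $G\backslash\f{X}$ is the class $[u]\in V_i$.

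Finally, combining the cases and substituting into the definition of $c$,
\[
\sum_{g\in G}c(g^{-1}x)=\sum_{i=1}^N\frac{1}{|G_i|}\sum_{g\in G}\varphi_i(g^{-1}x)=\sum_{i=1}^N\bar\phi_i(\bar x)=1,
\]
the last equality holding because $\{\bar\phi_i\}$ is a partition of unity on $G\backslash\f{X}$. The only genuine subtleties are bookkeeping: verifying that $\varphi_i$ is well-defined on $G\times_{G_i}U_i$ (one checks both branches of the piecewise definition agree under the equivalence relation, using that $G_i$ is a subgroup and that $\bar\phi_i$ is $G_i$-invariant) and that the representation of nonzero contributions by the coset $hG_i$ is independent of the choice of representative $(h,u)$. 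These are straightforward consequences of the subgroup structure and the $G_i$-invariance, and form the main technical point of the argument.
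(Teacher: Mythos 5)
Your proof is correct and follows essentially the same route as the paper's: for each fixed $i$, the only nonvanishing terms in $\sum_{g\in G}\varphi_i(g^{-1}x)$ are those with $g^{-1}h\in G_i$ (i.e.\ $g$ in the coset $hG_i$), each contributing $\bar\phi_i(\pi(x))$, so the sum is $|G_i|\,\bar\phi_i(\pi(x))$, and dividing by $|G_i|$ and summing over $i$ gives $\sum_i\bar\phi_i(\pi(x))=1$. The only cosmetic difference is that you sum over all $i$ and note that $\varphi_i$ and $\bar\phi_i(\pi(x))$ both vanish when $x\notin G\times_{G_i}U_i$, whereas the paper restricts up front to the index set $J=\{i\mid x\in G\times_{G_i}U_i\}$; the substance is identical.
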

\begin{proof}
Fix $x\in\f{X}$. For each $j\in J:=\{i|x\in G\times_{G_i}U_i\}$, there exists $h_j\in G$ and $u_j\in U_{j}$ so that $x=[h_j,u_j]$. Note also that if $j\notin J$, $\varphi_j(x)=0.$
Hence, by~(\ref{eq:Particularcut-off}) 
\begin{equation*}
\sum_{g\in G}c(g^{-1}x)=\sum_{g\in G}\sum_{i=1}^N\frac{1}{|G_i|}\varphi_i(x) c(g^{-1}[h, u])=\sum_{g\in G}\sum_{j\in J}\frac{1}{|G_j|}\varphi_j[g^{-1}h_j, u_j].
\end{equation*}
As $\varphi_{j}$ vanishes unless $g^{-1}h_j=k\in G_j$, then by the $G_j$-invariance of $\varphi_j$, we obtain
\begin{equation*}
\sum_{g\in G}c(g^{-1}x)=\sum_{k\in G_j}\sum_{j\in J}\frac{1}{|G_j|}\varphi_j([k, u_j])=\sum_{j\in J}\bar\phi_j([u_j]) 
\end{equation*}
Let $\pi: \f{X}\rightarrow G\backslash\f{X}$ be the quotient map, then by definition 
$\pi(x)=[u_j]$ for all $j\in J$ and $\bar\phi_j(\pi(x))=0$ if $j\notin J$.
As $\{\bar\phi_i\}$ is a partition of unity on $G\backslash \f{X}$ we have 
\begin{equation*}
\sum_{j\in J}\bar\phi_j([u_j])=\sum_{j=1}^N \bar\phi_j(\pi(x))=1.
\end{equation*}
The lemma is then proved.
\end{proof}

The cut-off function is designed to deal with $G$-invariant sections and $G$-invariant operators. 
Denote by $\Gamma(\f{X}, \cE)^G$ the subset of $G$-invariant sections in $\Gamma(\f{X}, \cE)$.
We shall present some properties of the cut-off function to be used later.

\begin{lemma}
\label{lem:Propertycut-offFunction}
Let $c\in C_c^{\infty}(\cG_0)$ be the cut-off function given by~(\ref{eq:Particularcut-off}). Then
\begin{enumerate}
\item For  a smooth function $f$ on $G\backslash\f{X}$ and its lift, the $G$-invariant function $\tilde f\in C^\infty(\f{X})$, we have 
\begin{equation*}
 \int_{\f{X}}^{orb}c(x)\tilde f(x)d\vol_{\f{X}}(x) = \int_{G\backslash\f{X}}^{orb}f(x)d\vol_{G\backslash\f{X}}(x).
\end{equation*}
\item If a continuous function $h$ on $\f{X}\times\f{X}$ satisfies that
\begin{equation}
\label{eq:Assumption}
h(gx, gy)=h(x,y), \qquad \forall g\in G, \forall x, y\in \f{X}
\end{equation} 
and that $c(x)h(x, y), c(y)h(x, y)$ are integrable on $\f{X}\times\f{X}$, we have 
\begin{equation*}
\int_{\f{X}\times\f{X}}^{orb}c(x)h(x, y)d\vol_{\f{X}}(x)d\vol_{\f{X}}(y)=\int_{\f{X}\times\f{X}}^{orb}c(y)h(x,y)d\vol_{\f{X}}(x)d\vol_{\f{X}}(y).
\end{equation*}
\end{enumerate} 
\end{lemma}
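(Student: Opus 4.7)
The plan is to exploit the explicit form $c=\sum_{i=1}^{N}\frac{1}{|G_i|}\varphi_i$ from~(\ref{eq:Particularcut-off}) together with the orbifold atlases supplied by Lemma~\ref{lem:quotient}: $(G\times_{G_i}\tilde U_i,H_i)$ covers $\f{X}$, while $(\tilde U_i,G_i\times H_i)$ covers $G\backslash\f{X}$.

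For part~(1), linearity together with the support property $\supp\varphi_i\subset G\times_{G_i}U_i$ reduces the claim to the identity
\begin{equation*}
\frac{1}{|G_i|}\int_{\f{X}}^{orb}\varphi_i(x)\tilde f(x)\,d\vol_{\f{X}}(x)=\frac{1}{|G_i||H_i|}\int_{\tilde U_i}\hat{\bar\phi}_i(\tilde u)\hat f(\tilde u)\,d\vol(\tilde u)
\end{equation*}
for each $i$, whose sum over $i$ is precisely $\int_{G\backslash\f{X}}^{orb}f\,d\vol_{G\backslash\f{X}}$ by the partition-of-unity property of $\{\bar\phi_i\}$ on $G\backslash\f{X}$ and the orbifold atlas $(\tilde U_i,G_i\times H_i)$. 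To establish this local identity I would choose coset representatives for $G/G_i$ to realise $G\times_{G_i}\tilde U_i$ as a disjoint union of copies of $\tilde U_i$. By construction, the lift $\hat\varphi_i$ vanishes on every copy except the one indexed by the identity coset $G_i$, where it equals $\hat{\bar\phi}_i$; meanwhile the $G$-invariance of $\tilde f$ forces its lift to equal $\hat f$ on every copy. Integrating and dividing by $|H_i|$ as in~(\ref{orbi:integral}) produces the desired identity.

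For part~(2), the approach is the unfold--refold trick based on~(\ref{eq:cut-offFunctionOnG_0}). Substituting $1=\sum_{g\in G}c(g^{-1}y)$ into the left-hand integrand and swapping sum with integral produces
\begin{equation*}
\int_{\f{X}\times\f{X}}^{orb}c(x)h(x,y)\,d\vol(x)d\vol(y)=\sum_{g\in G}\int_{\f{X}\times\f{X}}^{orb}c(x)c(g^{-1}y)h(x,y)\,d\vol(x)d\vol(y).
\end{equation*}
The isometric diagonal change of variables $(x,y)\mapsto(gx,gy)$ preserves the orbifold volume form and, by~(\ref{eq:Assumption}), leaves $h$ invariant; each summand on the right thus becomes $\int c(gx)c(y)h(x,y)\,d\vol(x)d\vol(y)$. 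Refolding via $\sum_{g\in G}c(gx)=1$ returns the right-hand side of the desired identity.

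The principal delicacy is justifying the two interchanges of sum and integral in part~(2). I would handle this by first applying Tonelli to the non-negative function $c(x)|h(x,y)|$, equal to $c(x)\bigl(\sum_g c(g^{-1}y)\bigr)|h(x,y)|$, whose total integral is finite by the hypothesis; this yields absolute convergence of the sum of integrals and, by dominated convergence, legitimises both interchanges on the signed integrand.
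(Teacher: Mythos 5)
Your proof of part (1) follows essentially the same path as the paper: expand $c$ via~(\ref{eq:Particularcut-off}), observe that $\varphi_i$ is concentrated on the identity-coset copy of $\tilde U_i$ inside $G\times_{G_i}\tilde U_i$, pass to the chart $(\tilde U_i,G_i\times H_i)$ for $G\backslash\f{X}$, and sum over the partition of unity. Nothing to add there.

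Your proof of part (2), however, takes a genuinely different and arguably cleaner route than the paper. The paper unpacks both double integrals into a triple sum over $k\in G$ and $i,j=1,\dots,N$ of integrals over $\tilde U_i\times\tilde U_j$ weighted by $\frac{1}{|G_i||G_j||H_i||H_j|}\tilde\phi_i\tilde\phi_j$, and then matches the $k$-term of the left side with the $k^{-1}$-term of the right side using $h(x,ky)=h(k^{-1}x,y)$. You instead stay at the level of the global cut-off identity $\sum_{g}c(g^{-1}y)=1$: insert it, apply the isometric diagonal change of variables $(x,y)\mapsto(gx,gy)$ term by term, invoke the invariance of $h$, and refold. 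Your argument is correct, avoids any appeal to the specific form~(\ref{eq:Particularcut-off}) — it works for an arbitrary cut-off function — and your Tonelli/dominated-convergence justification of the two interchanges is exactly what is needed (note that the hypothesis that $c(x)h(x,y)$ is integrable, combined with your unfolded identity, actually implies integrability of $c(y)h(x,y)$, so the two stated hypotheses are not independent). The trade-off is that the paper's chart-level bookkeeping is reused verbatim in the proof of Theorem~\ref{thm:HeatKernelAsymptotics}, so the authors likely preferred consistency of technique; your argument is shorter and more structurally transparent.
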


\begin{proof}
 Let $\{\bar\phi_i\}$ be a partition of unity of $G\backslash \f{X}$ subordinate to the open
cover
\[
\{ V_i = G_i\backslash U_i = G_i \backslash  \tilde U_i /H_i\}_{i=1}^N
\]
such that the lift of $\bar\phi_i$ to $\tilde U_i$ is a smooth $G_i\times H_i$-invariant function, denoted by $\tilde \phi_i$.
Let $\phi_i\in C^{\infty}(\f{X})$ be the $G$-invariant partition of unity of the cover $\{G\times_{G_i}U_i\}_{i=1}^N$ given by $\phi_i[g, u]=\bar\phi_i[u]$ for $g\in G$ and $u\in U_i$ locally.

(1) Using the cut-off function as in (\ref{eq:Particularcut-off}) , we have   
\[
\begin{array}{lll} &&\disp{ \int_{\f{X}}^{orb}}c(x)\tilde f(x)d\vol_{\f{X}}(x)\\[3mm]
&=& \disp{\int_{\f{X}}^{orb} \sum_{i=1}^N} \frac{1}{|G_i|}\varphi_i(x)\tilde f(x)d\vol_{\f{X}}(x)  \\[3mm]
&=& \disp{\sum_{i=1}^N\int_{ U_i}^{orb}} \frac{1}{|G_i|}\varphi_i(x)\tilde f(x)d\vol_{\f{X}}(x)  
\\[3mm]
&=&\disp{\sum_{i=1}^N\frac{1}{|H_i\times G_i|}\int_{ \tilde U_i}} \tilde \phi_i(x) \tilde f(x)d x   \\[2mm]
&=& \disp{ \int_{G\backslash \f{X}}^{orb}}    f(x)d\vol_{G\backslash\f{X}}(x).
\end{array}
\]
 
(2) Using~(\ref{eq:Particularcut-off})  for the cut-off function and the partition of unity $\{\phi_j\}$ for $\f{X}$
subordinated to $\{G\times _{G_j}  U_j\}_{j=1}^N$, we get
\begin{equation}
\label{eq:in Pf1}\begin{array}{lll} 
&&\disp{ \int_{\f{X}\times\f{X}}^{orb}} c(x)h(x, y)d\vol_{\f{X}}(x)d\vol_{\f{X}}(y)\\[3mm]
&=&\disp{  \sum_{i=1}^N \int^{orb}_{ U_i} \int_{\f{X}}^{orb} }  \frac{1}{|G_i|}\varphi_i(x)
 h(x, y ) d\vol_{\f{X}}(x)d\vol_{\f{X}}(y)  \\[3mm]
&=&\disp{  \sum_{i=1}^N \frac{1}{|G_i \times H_i| }\int_{\tilde U_i} \int_{\f{X}}^{orb} }
\tilde \phi_i(x)\ h(x, y )dx d\vol_{\f{X}}(y) \\[3mm]
&=&\disp{ \sum_{i,j=1}^N \frac{1}{|G_i| |H_i| }\int_{\tilde U_i}\int^{orb}_{ G\times _{G_j}  U_j}}
\tilde \phi_i(x) \phi_j(y)h(x,  y)dx d\vol_{\f{X}}(y)
 \\[3mm]
&=&\disp{ \sum_{k\in G}\sum_{i, j=1}^N\frac{1}{|G_i||G_j||H_i||H_j|}\int_{\tilde U_i\times \tilde U_j}}
\tilde \phi_i(x)\tilde\phi_j(y)h(x, k\cdot y)dx dy.
\end{array}
\end{equation}
Here the last equality follows from
\[ 
 \int^{orb}_{ G\times _{G_j}  U_j} 
  \phi_j(y)h(x,  y) d\vol_{\f{X}}(y) = \sum_{k\in G} \frac{1}{|G_j|} \int^{orb}_{U_j} \tilde\phi_j(y)h(x, k\cdot y)  dy.
\]
Similarly,  we have 
\begin{equation}
\label{eq:in Pf2} \begin{array}{lll} 
&&\disp{ \int_{\f{X}\times\f{X}}^{orb}}c(y)h(x,y)d\vol_{\f{X}}(x)d\vol_{\f{X}}(y) \\
 &= &\disp{\sum_{k\in G}\sum_{i, j=1}^N\frac{1}{|G_i||G_j||H_i||H_j|}\int_{\tilde U_i\times \tilde U_j} }\tilde \phi_i(x)\tilde\phi_j(y)h(k\cdot x, y) dx dy.\end{array}
\end{equation}
By~(\ref{eq:Assumption}), we have $h(x,  k\cdot y)=h( k^{-1}\cdot x, y)$ for $k\in G$. Thus, the $k$-term in~(\ref{eq:in Pf1}) is the $k^{-1}$-term in~(\ref{eq:in Pf2}). Then the integrals~(\ref{eq:in Pf1}) and~(\ref{eq:in Pf2}) coincide.
The lemma is then proved.
\end{proof}

\begin{remark}
Condition (2) in Definition~\ref{def:SchwartzClass} is to reduce the complexity in calculating the new traces. However, $e^{-t(\Dirac^\cE)^2}$ does not have proper support. In view of Theorem~\ref{thm:HeatKernelAsympMain}, when $t>0$ is small,  the heat kernel decays exponentially off the diagonal. Therefore, we shall use the following lemma to replace the heat kernel by an operator in the class $\mathbf{S}$ having the same value on the diagonal.
\end{remark}

\begin{lemma}
\label{lem:SmoothingToProperlySupportedSmoothing}
For any $G$-invariant smoothing operator $S$,
the operator 
\begin{equation}
\label{eq:SmoothingToProperlySupportedSmoothing}
S_0:=\sum_{g\in G}g\cdot(c^{\frac12}Sc^{\frac12})
\end{equation} 
belongs to $\mathbf{S}$ ({\em cf.} Definition~\ref{def:SchwartzClass})  and has the same Schwartz kernel as that of $S$ along the diagonal.
\end{lemma}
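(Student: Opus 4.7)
The plan is to compute the Schwartz kernel of $S_0$ explicitly and then verify the three properties required by Definition~\ref{def:SchwartzClass}, namely smoothness and $G$-invariance of the kernel, proper support, and boundedness on $L^2(\f{X},\cE)$; the diagonal identity $K_{S_0}(x,x)=K_S(x,x)$ will then follow from the defining relation~(\ref{eq:cut-offFunctionOnG_0}). Interpreting $g\cdot A$ as conjugation by the unitary action of $g$ on $L^2$-sections, the kernel of $g\cdot(c^{1/2}Sc^{1/2})$ at $(x,y)$ is $g\,c(g^{-1}x)^{1/2}K_S(g^{-1}x,g^{-1}y)c(g^{-1}y)^{1/2}g^{-1}$. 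Applying the $G$-invariance of $S$ in the form $K_S(g^{-1}x,g^{-1}y)=g^{-1}K_S(x,y)g$, this reduces to $c(g^{-1}x)^{1/2}c(g^{-1}y)^{1/2}K_S(x,y)$, and summing over $G$ yields the formal expression
\[
K_{S_0}(x,y)=K_S(x,y)\sum_{g\in G}c(g^{-1}x)^{1/2}c(g^{-1}y)^{1/2}.
\]

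Smoothness and $G$-invariance will then follow quickly. After choosing the cut-off so that $c^{1/2}$ is itself smooth (for instance by taking $c=\rho^2/\sum_{g\in G}\rho^2(g^{-1}\cdot)$ for a suitable nonnegative $\rho\in C_c^\infty(\f{X})$), each summand is smooth, and since the action is proper with $\supp(c)$ compact, the series is locally finite. Hence $K_{S_0}$ defines a smooth section of $\cE\boxtimes\cE^*$, and the invariance of $S_0$ is manifest from reindexing: $h\cdot S_0=\sum_{g\in G}(hg)\cdot(c^{1/2}Sc^{1/2})=S_0$.

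For proper support, I will observe that $K_{S_0}(x,y)\neq0$ forces the existence of some $g\in G$ with $g^{-1}x,g^{-1}y\in\supp(c)$, and hence $d(x,y)=d(g^{-1}x,g^{-1}y)\le R:=\op{diam}(\supp c)$. Thus for every compact $C\subset\f{X}$ the set $\{(x,y):K_{S_0}(x,y)\neq 0,\ x\in C\}$ is contained in the closed tube $\{(x,y):x\in C,\ d(x,y)\le R\}$, which is compact because $\f{X}$ is complete and has bounded geometry under the co-compact isometric $G$-action (the same comparison argument used in the proof of Theorem~\ref{thm:HeatKernelAsympMain}). This is precisely condition~(\ref{eq:properSuppG-inv}).

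For $L^2$-boundedness I will apply Schur's test. The $G$-invariance of $K_{S_0}$ together with co-compactness reduces $\sup_{x,y}|K_{S_0}(x,y)|$ to the supremum on the compact product $\supp(c)\times\supp(c)$, hence it is finite; combined with the bound $d(x,y)\le R$ and the uniform upper bound on the volume of $R$-balls from the same bounded-geometry argument, this yields uniform bounds on $\int|K_{S_0}(x,y)|\,d\vol(y)$ and on its transpose, so Schur's test implies that $S_0$ is bounded on $L^2(\f{X},\cE)$. Finally, the diagonal identity is immediate: $K_{S_0}(x,x)=K_S(x,x)\sum_{g\in G}c(g^{-1}x)=K_S(x,x)$ by~(\ref{eq:cut-offFunctionOnG_0}). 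The main technical point is the kernel identification in the first step, which hinges on correctly implementing the $G$-action on operators and arranging for $c^{1/2}$ to be smooth; once this is settled, the remaining verifications are routine consequences of proper co-compactness.
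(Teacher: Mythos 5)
Your proof is correct and follows the same route as the paper: compute the Schwartz kernel of $S_0$ using the $G$-invariance of $K_S$ to get $K_{S_0}(x,y)=K_S(x,y)\sum_{g\in G}c(g^{-1}x)^{1/2}c(g^{-1}y)^{1/2}$, and then the diagonal identity drops out from the cut-off condition. The paper stops essentially at the kernel formula and simply asserts that $S_0$ is a properly supported smoothing operator, whereas you spell out the verification of each condition in Definition~\ref{def:SchwartzClass}; in particular you correctly flag the point that $c^{1/2}$ need not be smooth for an arbitrary cut-off $c$ (the paper's construction~(\ref{eq:Particularcut-off}) does not guarantee this) and that one should choose the cut-off as a square of a smooth function — a genuine gap that the paper passes over in silence.
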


\begin{proof}
Let $K_S$ and $K_{S_0}$ be the heat kernels for the operators $S$ and $S_0$ respectively. Then by definition~(\ref{eq:SmoothingToProperlySupportedSmoothing}) we have
\begin{equation}
\label{eq:KernelS_0andS}
K_{S_0}(x,y)=\sum_{g\in G}c(g^{-1}x)^{\frac12}K_S(x, y)c(g^{-1}y)^{\frac12}
\end{equation}
and conclude that $S_0$ is a properly supported smoothing operator.
Moreover, if $x=y$, then 
\begin{equation*}
K_{S_0}(x,x)=\sum_{g\in G}c(g^{-1}x)K_S(x,x)=K_S(x,x).
\end{equation*}
The lemma is then proved.
\end{proof}

In the following, we shall define a family of traces, indexed by the conjugacy classes of elements in $G$, for the set of operators slightly larger than the set $\mathbf{S}$ defined in Definition~\ref{def:SchwartzClass}. 
These traces are closely related to the localized indices in Section~\ref{sec:Localized indices}. The expression of these traces follows from an explicit computation in Lemma~\ref{lem:VeryImportantLemma}.

For any $g\in G$, denote by $(g)_G\subset G$ (or $(g)$ when there is no ambiguity) the conjugacy class of $g\in G.$ Also, \emph{whenever} traces of an operator involved in this paper, if the space is $\Z/2\Z$-graded, we shall mean a  graded traces or a supertrace:
\begin{equation*}
 \Tr_s\begin{pmatrix}A & B \\ C & D \end{pmatrix}:=\Tr(A)-\Tr(B).
\end{equation*}

\begin{definition}[\bf $(g)$-trace class operator]
\label{def:(g)trace}
A bounded properly supported $G$-invariant operator $S: L^2(\f{X}, \cE)\rightarrow L^2(\f{X},\cE)$ is of {\bf $(g)$-trace class} if the positive operator 
\[
\sum_{h\in(g)}h^{-1}\phi|S|\psi
\]
 is of trace class for all positive functions $\phi, \psi \in C^{\infty}_c(\cG_0)$. Here $|S|=(SS^*)^{\frac12}.$
 Here $h^{-1}$ stands for the unitary operator on $L^2(\f{X}, \cE)$ given by 
 \[
 (h^{-1}\cdot u)(x):=h^{-1} u(hx)
 \]
  for $u\in L^2(\f{X}, \cE).$  If $S$ is a $(g)$-trace class operator,   the $(g)$-trace is defined  by the formula 
\begin{equation}
\label{def(g)trace}
\tr_s^{(g)}(S):=\sum_{h\in(g)}\tr_s[h^{-1}cS],
\end{equation} where $c$ is a cut-off function on $\f{X}$. 
\end{definition}

\begin{remark}
In case of $G$-manifold, where $g$ is the group identity,  the $(e)$-trace is the same notion as the $G$-trace of bounded $G$-invariant operators on $L^2(\f{X}, \cE)$ introduced in~\cite{W:2012}.  

\end{remark}

Denote by $K$ the subset of $G$ having the following properties: 
\begin{equation}\label{ProofTrace}
\begin{split}
&\{kgk^{-1}| k\in K\}=(g);\\
& \text{If } k_1\neq k_2, \text{ for all } k_j\in K, \text{ then } k_1gk_1^{-1}\neq k_2gk_2^{-1}.
\end{split} 
\end{equation}
We shall sometime to use $\{kgk^{-1}\}_{k\in K}$ to denote the conjugacy class $(g)$ of $g\in G.$
In view of the following lemma, we shall identify $K$ with the quotient $G/Z_G(g)$ where  $Z_{G}(g)=\{h\in G| hg=gh\}$ is the centralizer of $g$ in $G.$
Note that, if finite, $K$ has the same cardinality as that of $(g).$

\begin{lemma}
\label{lem:K^GZ_G=G}
Let $K$ be a subset of $G$ having the property~(\ref{ProofTrace}). Then
\begin{equation*}
K\cdot Z_{G}(g)=G.
\end{equation*}
\end{lemma}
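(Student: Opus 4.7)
The plan is to take an arbitrary $h\in G$ and exhibit a factorization $h=kz$ with $k\in K$ and $z\in Z_G(g)$ directly from the two defining properties of $K$.

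First I would observe that the conjugate $hgh^{-1}$ lies in $(g)$, so by the first condition in~(\ref{ProofTrace}) (surjectivity onto the conjugacy class), there exists some $k\in K$ with $kgk^{-1}=hgh^{-1}$. The second condition in~(\ref{ProofTrace}) (injectivity) is not needed for existence but confirms that this $k$ is uniquely determined by $h$; informally, $K$ is a transversal for the left cosets of $Z_G(g)$, since $k_1gk_1^{-1}=k_2gk_2^{-1}$ is equivalent to $k_2^{-1}k_1\in Z_G(g)$.

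Next I would rearrange the equation $kgk^{-1}=hgh^{-1}$ into $(k^{-1}h)g(k^{-1}h)^{-1}=g$, which by definition of the centralizer means $z:=k^{-1}h\in Z_G(g)$. Writing $h=k\cdot(k^{-1}h)=k\cdot z$ then yields the desired membership $h\in K\cdot Z_G(g)$, and since $h$ was arbitrary we conclude $G\subseteq K\cdot Z_G(g)$. The reverse inclusion $K\cdot Z_G(g)\subseteq G$ is trivial.

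There is no real obstacle here; the statement is essentially a tautological consequence of the fact that the map $K\to G/Z_G(g)$, $k\mapsto kZ_G(g)$, is a bijection (surjectivity by the first condition in~(\ref{ProofTrace}) via the orbit-stabilizer identification of $(g)$ with $G/Z_G(g)$, and injectivity by the second condition). The only thing to be careful about is to phrase the argument so that it works whether or not $K$ is finite, which is automatic in the proof above.
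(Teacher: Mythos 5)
Your proof is correct and uses essentially the same argument as the paper: given $h\in G$, the first condition in~(\ref{ProofTrace}) produces $k\in K$ with $kgk^{-1}=hgh^{-1}$, whence $k^{-1}h\in Z_G(g)$ and $h=k(k^{-1}h)\in K\cdot Z_G(g)$. The paper's proof additionally verifies injectivity of $(k,z)\mapsto kz$, which is not needed for the stated equality $K\cdot Z_G(g)=G$ but is used implicitly elsewhere to identify $K$ with $G/Z_G(g)$; you correctly observe this distinction.
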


\begin{proof}
If $k_1, k_2\in K$ and $h_1, h_2\in Z_{G}(g)$ satisfy $k_1h_1=k_2h_2,$ then $$k_1h_1gh_1^{-1}k_1^{-1}=k_2h_2gh_2^{-1}k_2^{-1}$$ and it is the same as $k_1gk_1^{-1}=k_2gk_2^{-1}$. By definition of $K$, we have $k_1=k_2$, hence $h_1=h_2.$ 
Then the map
\begin{equation}
\label{eq:KZ=G}
m: K\times Z_{G}(g)\longrightarrow G \qquad (k, h)\mapsto kh
\end{equation} 
is injective.
For each $l\in G$, then by definition there is an $k\in K$ so that $lgl^{-1}=kgk^{-1}$. Thus, $k^{-1}l\in Z_G(g)$ and~(\ref{eq:KZ=G}) is surjective. 
The lemma is then proved.
\end{proof}

\begin{proposition}
\label{prop:FormulaFor(g)Trace}
Let $S$ be a $(g)$-trace class operator having smoothing Schwartz kernel and let $c$ be a cut-off function on $\f{X}$. Then 
\begin{equation}
\label{eq:tr(g)FormulaInProposition}
\tr_s^{(g)}S=\sum_{h\in(g)}\int_{\f{X}}^{orb}c(x)\Tr_s [h^{-1}K_S(hx, x)] d\vol_{\f{X}}(x).
\end{equation}
Here, $\Tr_s$ is the matrix supertrace of $\End(\cE_x, \cE_x)$.
Alternatively, 
 \begin{equation}\label{eq:(g)TraceKernel}
 \tr_s^{(g)}S= \sum_{k\in G/Z_G(g)}\disp{\int}_{\f{X}}^{orb}c(kx)\Tr_s [g^{-1}K_S(gx, x)]d\vol_{\f{X}}(x),
\end{equation}
where $G/Z_G(g)$ is identified as a subset $K$ of $G$ having property~(\ref{ProofTrace}).
In particular, operators from $\mathbf{S}$ given by Definition~\ref{def:SchwartzClass} are of $(g)$-trace classes.
\end{proposition}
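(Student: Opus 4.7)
The plan is to compute $\tr_s(h^{-1}cS)$ for each $h\in(g)$ directly from its Schwartz kernel, then reassemble the sum in Definition~\ref{def:(g)trace} using the $G$-equivariance of $K_S$ (Remark~\ref{rek:GinvKernel}) together with invariance of $d\vol_{\f{X}}$ under the isometric $G$-action. The concluding ``in particular'' clause will follow from a finiteness argument combining proper support of $K_S$ with properness of the $G$-action.

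First, I would note that $(h^{-1}cSu)(x)=h^{-1}c(hx)\int K_S(hx,z)u(z)\,dz$, so the Schwartz kernel of $h^{-1}cS$ at $(x,y)$ is $h^{-1}c(hx)K_S(hx,y)$; because $c$ is compactly supported and $K_S$ is smoothing and properly supported, this kernel is smooth with compact support, hence the operator is trace class and $\tr_s(h^{-1}cS)$ equals the orbifold integral of its diagonal supertrace. After a change of variable replacing $x$ by $h^{-1}x$ (valid because $G$ acts by isometries and preserves $d\vol_{\f{X}}$), one obtains $\int_{\f{X}}^{orb} c(x)\,\Tr_s\bigl(h^{-1}K_S(x,h^{-1}x)\bigr)\,d\vol_{\f{X}}(x)$. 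The $G$-equivariance of Remark~\ref{rek:GinvKernel} gives $K_S(hx,x)=hK_S(x,h^{-1}x)h^{-1}$, so that $h^{-1}K_S(x,h^{-1}x)=h^{-2}K_S(hx,x)h$; cyclicity of the matrix supertrace then identifies $\Tr_s(h^{-1}K_S(x,h^{-1}x))$ with $\Tr_s(h^{-1}K_S(hx,x))$, and summation over $h\in(g)$ produces~(\ref{eq:tr(g)FormulaInProposition}).

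For~(\ref{eq:(g)TraceKernel}), I would invoke Lemma~\ref{lem:K^GZ_G=G} to parametrize $(g)=\{kgk^{-1}\}_{k\in K}$ with $K\cong G/Z_G(g)$. Substituting $h=kgk^{-1}$ into~(\ref{eq:tr(g)FormulaInProposition}), another application of the $G$-equivariance (this time with $k$) rewrites $K_S(kgk^{-1}x,x)$ as $kK_S(gk^{-1}x,k^{-1}x)k^{-1}$; cyclicity of $\Tr_s$ cancels the outer $k$-conjugation, and the substitution $x\mapsto kx$ then yields~(\ref{eq:(g)TraceKernel}). For the final assertion, fix $\phi,\psi\in C_c^{\infty}(\cG_0)$ and pick $R>0$ with $K_S$ supported in $\{d(x,y)\le R\}$; by properness of the $G$-action the set $\{h\in G:\exists y\in\supp\phi\cup\supp\psi,\,d(hy,y)\le R\}$ is finite, so the sum $\sum_{h\in(g)}h^{-1}\phi|S|\psi$ collapses to finitely many terms.

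The delicate step is then showing that each such term is trace class. Although $SS^*$ is smoothing and properly supported, the positive square root $|S|=\sqrt{SS^*}$ need not have a smooth kernel, so trace classness of $\phi|S|\psi$ does not follow immediately from kernel estimates. I expect this to be the main obstacle, to be overcome by a functional calculus argument exploiting the smoothing character of every polynomial in $SS^*$, together with a Hilbert--Schmidt estimate for $\phi|S|\psi$ derived from the smooth compactly supported kernel of $\phi SS^*\phi$.
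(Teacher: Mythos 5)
Your derivations of~(\ref{eq:tr(g)FormulaInProposition}) and~(\ref{eq:(g)TraceKernel}) are correct and follow the same route as the paper: compute the kernel of $h^{-1}cS$, change variables using the invariance of $d\vol_{\f{X}}$, invoke the $G$-equivariance~(\ref{eq:GinvKernel}), and parametrize $(g)$ by $K\cong G/Z_G(g)$ via Lemma~\ref{lem:K^GZ_G=G}. In fact you make explicit the equivariance-plus-cyclicity step (turning $\Tr_s[h^{-1}K_S(x,h^{-1}x)]$ into $\Tr_s[h^{-1}K_S(hx,x)]$, and cancelling the $k$-conjugation) which the paper leaves implicit, so this part is fine.

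The gap is in the final clause. First, your finiteness argument is not valid as stated: the support bound $d(x,y)\le R$ holds for $K_S$, not for the kernel of $|S|=(SS^*)^{1/2}$, which in general is not properly supported; moreover $h^{-1}\phi|S|\psi$ is the unitary $h^{-1}$ composed with the \emph{fixed} operator $\phi|S|\psi$, so no individual term ever vanishes, whatever $h$ is. What is true, and what the definition actually needs, is that the \emph{localization} dies for all but finitely many $h$: by cyclicity, $\tr_s[h^{-1}\phi|S|\psi]=\tr_s[\psi\, h^{-1}\phi\,|S|]$, and the operator $\psi\, h^{-1}\phi$ is zero as soon as $h(\supp\psi)\cap\supp\phi=\emptyset$, which by properness of the action excludes all but finitely many $h\in G$. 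This is exactly the mechanism behind the paper's one-line verification, which avoids $|S|$ altogether and simply observes that, $K_S$ being properly supported and $c$ compactly supported, only finitely many terms of the sum in~(\ref{eq:tr(g)FormulaInProposition}) are nonzero. Second, the trace-class property of $\phi|S|\psi$, which you leave as an ``expected obstacle'' to be handled by functional calculus, has a much shorter solution: write the polar decomposition $S=U|S|$, so $|S|=U^*S$ with $U$ a partial isometry, and hence $\phi|S|\psi=(\phi U^*)(S\psi)$; the operator $S\psi$ has a smooth, compactly supported kernel (proper support of $K_S$ against the compact support of $\psi$), hence is trace class, and a bounded operator times a trace-class operator is trace class. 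With these two repairs your argument closes; as it stands, the finiteness step is wrong and the trace-class step is only a declared intention.
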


\begin{proof}
As $(h^{-1}\cdot u)(x)=h^{-1}u(hx)$ for $u\in L^2(\f{X}, \cE)$, we have 
\begin{align*}
[h^{-1}cS]u(x) =&c(hx)h^{-1}Su(hx)=\int_{\f{X}}^{orb} c(hx)h^{-1}[K_S(hx,y)]u(y)d\vol_{\f{X}}(y).
\end{align*}
Then by the change of variable $x\to h^{-1}x$ and the invariance of the measure $d\vol_{\f{X}}(h^{-1}x)=d\vol_{\f{X}}(x)$, 
we obtain
\begin{align*}
\tr_s[h^{-1}c S]
=\int_{\f{X}}^{orb} c(x)\Tr_s [h^{-1}K_S(hx, x)]d\vol_{\f{X}}(x)
\end{align*}
If $h=kgk^{-1}$, then as $S$ is $G$-invariant, 
by~(\ref{eq:GinvKernel}) we have 
$h^{-1}[K_S(hx', x')]=g^{-1}[K_S(gx, x)]$ where $x'=kx.$
Therefore, 
\begin{align*}
\tr_s^{(g)}S&=\sum_{h\in(g)}\tr_s[h^{-1}cS]\\
&=\sum_{h\in(g)}\int_{\f{X}}^{orb}c(x)\Tr_s [h^{-1}K_S(hx, x)] d\vol_{\f{X}}(x)\\
&=\sum_{k\in G/Z_G(g)}\int^{orb}_{\f{X}}c(kx)\Tr_s [g^{-1}K_S(gx, x)]d\vol_{\f{X}}(x).
\end{align*}

If $S\in\mathbf{S}$, then as the action of $G$ on $\f{X}$ is proper, there are finitely many nonvanishing terms in the sum in~(\ref{eq:tr(g)FormulaInProposition}).
Hence operators from $\mathbf{S}$ are of $(g)$-trace classes.
The proposition is proved.
\end{proof}

\begin{remark}
\label{rem:IndependentOfcut-offFunction}
The trace $\tr_s^{(g)}$ given by Definition~\ref{def:(g)trace} does not depend on the choice of cut-off function $c$.
In fact, set
$$m(x):=\sum_{h\in(g)}\Tr_s [h^{-1}K_S(hx, x)]$$ and observe that for any $k\in G$, we have
\[
\begin{array}{lll}
m(kx) &= & \sum_{h\in(g)}\Tr_s h^{-1}[K_S(hkx, kx)]\\[2mm]
&=& \sum_{h\in(g)}\Tr_s[(k^{-1}hk)^{-1}K_S(k^{-1}hkx, x)]=m(x).
\end{array}
\]
Then by Lemma~\ref{lem:Propertycut-offFunction} (1) the integral over the orbifold $\f{X}$ does not depend on choice of a cut-off function $c.$
\end{remark}

The $(g)$-trace defined in~(\ref{def(g)trace}) has the tracial property as is shown in the following proposition.
As we shall see later in the proof of Proposition~\ref{prop:LocalizedIndex=SupertraceSmoothing}, this trace is the composition of a matrix trace and the localized $(g)$-trace given by Definition~\ref{def:Localized(g)Trace}.

\begin{proposition}\label{tracia:property}
Let $S, T$ be $G$-invariant operators and let $ST$ and $TS$ be of $(g)$-trace class, then 
\begin{equation*}
\tr_s^{(g)}(ST)=\tr_s^{(g)}(TS).
\end{equation*}
\end{proposition}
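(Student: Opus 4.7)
The plan is to reduce the tracial identity to the symmetry property of cut-off integrals established in Lemma~\ref{lem:Propertycut-offFunction}(2). First I would apply the kernel formula~(\ref{eq:tr(g)FormulaInProposition}), expand $K_{ST}(hx,x) = \int K_S(hx,z) K_T(z,x)\, d\vol(z)$, and write
\[
\tr_s^{(g)}(ST) = \sum_{h\in(g)} \int_{\f{X}}^{orb}\!\!\int_{\f{X}}^{orb} c(x)\, \Tr_s\bigl[h^{-1} K_S(hx, z)\, K_T(z, x)\bigr]\, d\vol(z)\, d\vol(x).
\]
Applying the matrix cyclic trace, substituting $z \mapsto hz$ (valid because the $G$-action is isometric, so $d\vol(hz) = d\vol(z)$), and invoking the $G$-equivariance $K_S(hx, hz) = h\, K_S(x,z)\, h^{-1}$ followed by another cyclic rearrangement, I would recast this as
\[
\tr_s^{(g)}(ST) = \int_{\f{X}}^{orb}\!\!\int_{\f{X}}^{orb} c(x)\, H(x,z)\, d\vol(x)\, d\vol(z),
\]
where I set $H(x,z) := \sum_{h\in(g)} \Tr_s\bigl[h^{-1} K_T(hz, x)\, K_S(x, z)\bigr]$.

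Second, the same kernel formula applied directly to $TS$ (expanding $K_{TS}$ and renaming dummy variables) yields
\[
\tr_s^{(g)}(TS) = \int_{\f{X}}^{orb}\!\!\int_{\f{X}}^{orb} c(z)\, H(x,z)\, d\vol(x)\, d\vol(z),
\]
so the proof is reduced to interchanging $c(x)$ and $c(z)$ in the integrand. For this I would verify the diagonal $G$-invariance $H(kx, kz) = H(x, z)$ for every $k \in G$: plugging the $G$-equivariance of $K_T$ and $K_S$ into each summand produces a factor of $k^{-1} h^{-1} k = (k^{-1}hk)^{-1}$ inside the trace, and since conjugation by $k$ permutes the conjugacy class $(g)$, re-indexing $h \mapsto k^{-1}hk$ recovers the original sum. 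With the hypothesis of Lemma~\ref{lem:Propertycut-offFunction}(2) satisfied, the swap $\int c(x) H = \int c(z) H$ delivers the conclusion.

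The main technical obstacle I expect is justifying Fubini, the interchange of the conjugacy-class sum with the double integral, and the legitimacy of the substitution $z \mapsto hz$ inside the summation. This is precisely where the $(g)$-trace class hypothesis on both $ST$ and $TS$ comes in: via polar decomposition, it guarantees absolute integrability of $\sum_{h\in(g)} c(x)\, |K_{ST}(hx,x)|$ (and likewise for $TS$), which in turn controls the iterated integral appearing after the expansion of the kernel. A secondary concern is that $H$ may fail to be integrable globally, but the appearance of the compactly supported factor $c(x)$ (resp.\ $c(z)$) confines the integration to a relatively compact region of $\f{X}$, where proper support of $S$ and $T$ (modulo smoothing tails) provides the requisite decay.
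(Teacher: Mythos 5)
Your proof is correct and takes essentially the same route as the paper's: both expand the kernels of $ST$ and $TS$ as double integrals via Proposition~\ref{prop:FormulaFor(g)Trace}, use a change of variables together with $G$-equivariance of the Schwartz kernels and cyclicity of the fiberwise supertrace to identify a common integrand against $c(x)$ and $c(z)$ respectively, verify the required diagonal $G$-invariance of that integrand by reindexing $h\mapsto k^{-1}hk$, and then invoke Lemma~\ref{lem:Propertycut-offFunction}(2). The only cosmetic difference is that you perform the manipulations on $\tr_s^{(g)}(ST)$ while the paper works on $\tr_s^{(g)}(TS)$, so your auxiliary function $H(x,z)$ has the roles of $S$, $T$ swapped relative to the paper's $m(x,y)$, but the argument is the same.
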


\begin{proof}
By Proposition~\ref{prop:FormulaFor(g)Trace}, we have 
\begin{equation}
\label{eq:1}
\tr_s^{(g)}(ST)=\sum_{h\in(g)}\int_{\f{X}\times\f{X}}^{orb}c(x)\Tr_s[h^{-1}(K_S(hx,y)K_T(y, x))]d\vol_{\f{X}}(x)d\vol_{\f{X}}(y).
\end{equation}
For the similar reason we arrive at the $(g)$-trace for $TS:$ 
\begin{equation}
\label{eq:2}\begin{array}{lll}
\tr_s^{(g)}(TS) &= &\disp{ \sum_{h\in(g)}\int_{\f{X}\times\f{X}}^{orb}} c(y)\Tr_s[h^{-1}(K_T(hy,x)K_S(x, y))]d x d y \\[2mm]
&=& \disp{\sum_{h\in(g)}\int_{\f{X}\times\f{X}}^{orb}} c(y)\Tr_s[h^{-1}(K_T(y,x)K_S(x, h^{-1}y))]d x d y,
\end{array}
\end{equation}
where the latter equality comes from the change of variables $y\mapsto h^{-1} y$.  
Set 
\[
m(x, y) =\sum_{h\in(g)}\Tr_s[h^{-1}(K_S(hx,y)K_T(y, x))].
\]
Then the $G$-invariance property of $S$ and $T$ implies that $m$ satisfies~(\ref{eq:Assumption}):
\[\begin{array}{lll}
m(kx, ky) &=&\disp{\sum_{h\in(g)} } \Tr_s[h^{-1}K_S(hkx, ky)K_T(ky, kx)]\\[2mm]
& =&\disp{\sum_{h\in(g)} } \Tr_s[(k^{-1}hk)^{-1}K_S(k^{-1}hkx, y)K_T(y, x)]\\[2mm]
&=& m(x,y), \qquad \forall k\in G.
\end{array} 
\]
Notice that 
\[\begin{array}{lll}
 \Tr_s[h^{-1}(K_S(hx,y)K_T(y, x))]  
&=&  \Tr_s[K_T(y, x) h^{-1}K_S(hx,y)] \\[2mm]
&=& \Tr_s[h^{-1}K_T(y, x)  K_S( x,h^{-1}y)] .  
\end{array}
\]
 Then we have 
 \[
 \begin{array}{lll}
  \tr_s^{(g)}(ST) &=&\disp{ \int_{\f{X}\times\f{X}}^{orb}}  c(x) m(x, y) dx dy \\[3mm]
  &=&  \disp{ \int_{\f{X}\times\f{X}}^{orb}}  c(y) m(x, y) dx dy = \tr_s^{(g)}(TS) , 
  \end{array}
  \]
 by  Lemma~\ref{lem:Propertycut-offFunction}~(2). \end{proof}

Finally we look at the $G$-invariant heat operator $S_t:=e^{-t(\Dirac^\cE)^2}$ for $t>0$ where $\Dirac^\cE$ is the $G$-invariant Dirac operator on $\f{X}$ given by~(\ref{eq:FormallySelfAdjointD}).  
Denote by $K_t(x, y)$ the Schwartz kernel of $e^{-t\Dirac^2}$, which can be ``approximated" by elements of $\mathbf{S}$, i.e., the norm of $K_t(x, y)$ decays rapidly when $d(x,y)\to\infty$ for a fixed positive number $t$ ({\em cf.} Theorem~\ref{thm:HeatKernelAsympMain}). 

Following from Theorem~\ref{thm:HeatKernelAsympMain}, when $g\in G$ does not have a fixed point on $\f{X}$, we have 
\begin{equation}
\label{eq:VanishingHeatKernel}
\lim_{t\to0^+}g^{-1}K_t(gx, x)=0.
\end{equation}
If $g\in G$ has a fixed point $(h,x)\in\f{X}$ where $h\in G$ and $x\in U_i,$ then 
\[
g(h,x)=(gh, x)=(h(h^{-1}gh), x) 
\]
 implies that 
$
h^{-1}gh\in G_i$ and fixes  $x\in U_i.
$
Then the following lemma implies that we may assume $g\in G_i$ and $g$ fixes the point $(e, x).$

\begin{lemma}
\label{lem:(g)capG_iNonempty}
If $g\in G$ has a fixed point on $\f{X}$, then there exists an $i\in\{1, \ldots, N\}$, so that $(g)\cap G_i$ is not empty.
\end{lemma}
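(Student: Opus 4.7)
The plan is to unpack what it means for $g$ to fix a point in one of the $G$-slices $G\times_{G_i}U_i$ given by Lemma~\ref{lem:quotient}, and then read off a conjugate of $g$ that lies in $G_i$.

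First, since $\{G\times_{G_i}U_i\}_{i=1}^N$ covers $|\f{X}|$, any fixed point $y$ of $g$ lies in some slice $G\times_{G_i}U_i$, so I may write $y=[h,x]$ with $h\in G$ and $x\in U_i$. Here recall that $G\times_{G_i}U_i = (G\times U_i)/G_i$ with the $G_i$-action $(h',x')\cdot k=(h'k, k^{-1}x')$, while the residual left $G$-action is $g\cdot [h',x']=[gh', x']$. Thus the fixed point condition $g\cdot y = y$ becomes the equivalence $(gh, x)\sim_{G_i}(h,x)$ in $G\times U_i$.

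Second, by definition of the $G_i$-action, this equivalence means there exists $k\in G_i$ with $gh = hk$ and $k^{-1}\cdot x = x$. The first equation gives $k = h^{-1}gh$, so this element is a representative of the conjugacy class $(g)$ which actually lies in $G_i$, and moreover the second equation shows that it fixes $x\in U_i$. Hence $h^{-1}gh\in (g)\cap G_i$, proving that $(g)\cap G_i$ is nonempty.

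The argument is essentially bookkeeping with the slice decomposition, so there is no real obstacle; the only care needed is to use the correct convention for the $G_i$-action defining $G\times_{G_i}U_i$ (which is exactly what was implicitly used in the paragraph preceding the lemma statement). Note as a byproduct that $k$ fixes $x$, which matches the author's subsequent remark that "we may assume $g\in G_i$ and $g$ fixes the point $(e,x)$."
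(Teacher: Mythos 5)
Your proof is correct and follows essentially the same route as the paper. The paper in fact proves the lemma in the paragraph immediately preceding the statement, writing $g(h,x)=(gh,x)=(h(h^{-1}gh),x)$ in the slice $G\times_{G_i}U_i$ and reading off $h^{-1}gh\in G_i$ with $h^{-1}gh\cdot x=x$; your proposal is exactly this computation, merely made explicit by spelling out the $G_i$-action $(h',x')\cdot k=(h'k,k^{-1}x')$ that underlies the equivalence $[gh,x]=[h,x]$.
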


\begin{proposition}
\label{prop:HeatOp(g)TraceClass}
Let $S_t$ be the heat operator $e^{-t(\Dirac^\cE)^2}$ and $K_t$ be its Schwartz kernel. 
Then it is of $(g)$-trace class and the limit of $(g)$-trace is given by the finite sum 
\begin{equation}
\label{eq:(g)traceIntegralHeatKernel}
 \tr_s^{(g)} S_{t}=\sum_{h\in(g)\cap(\cup_{i=1}^N G_i)}\lim_{t\to0^+}\int_{\f{X}}^{orb}c(x)\Tr_s [h^{-1}K_t(hx, x)] d\vol_{\f{X}}(x).
\end{equation}
\end{proposition}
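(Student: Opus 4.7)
The plan is to first express $\tr_s^{(g)}S_t$ as an absolutely convergent series of integrals indexed by $h\in(g)$ and then show that, as $t\to 0^+$, only contributions from $h$ in the finite set $(g)\cap\bigcup_i G_i$ survive. For the first step, although $S_t=e^{-t(\Dirac^\cE)^2}$ is not properly supported in the strict sense of Definition~\ref{def:(g)trace}, the uniform bound $\sum_{g\in G}\|K_t(x,gx)g\|_{\cE_x}\le L$ from Corollary~\ref{cor:UniformUpperBdSumK(x,gx)} together with the compact support of the test functions $\phi,\psi\in C_c^\infty(\cG_0)$ immediately makes $\sum_{h\in(g)}h^{-1}\phi S_t\psi$ of trace class. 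The same absolute convergence lets the calculation carrying out the proof of Proposition~\ref{prop:FormulaFor(g)Trace} go through verbatim, giving
\[
\tr_s^{(g)}S_t=\sum_{h\in(g)}\int_{\f{X}}^{orb}c(x)\Tr_s\bigl[h^{-1}K_t(hx,x)\bigr]\,d\vol_{\f{X}}(x).
\]

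Next I would isolate the surviving terms in the limit $t\to 0^+$. If the fixed-point set $\f{X}^h$ does not meet $\supp(c)$, then compactness gives $\delta_h=\inf_{x\in\supp c}d(hx,x)>0$, and the Gaussian factor $e^{-d(hx,x)^2/(4t)}$ from the heat-kernel expansion of Theorem~\ref{thm:HeatKernelAsympMain} forces the corresponding integral to decay like $O\bigl(t^{-n/2}e^{-\delta_h^2/(4t)}\bigr)$. The uniform bound of Corollary~\ref{cor:UniformUpperBdSumK(x,gx)} furnishes a dominating function allowing the limit to pass through the infinite sum, after which only finitely many $h$ with $\f{X}^h\cap\supp(c)\neq\emptyset$ remain. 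By Lemma~\ref{lem:(g)capG_iNonempty} each such $h$ is $G$-conjugate to an element of some $G_i$, and using the explicit form of $c$ from Lemma~\ref{lem:Specialcut-off} together with the $G$-invariance of the summand as in Remark~\ref{rem:IndependentOfcut-offFunction}, the various $G$-conjugate contributions coalesce into a single integral for each representative in $(g)\cap\bigcup_iG_i$, yielding the finite sum in the statement. To remove the apparent $t$-dependence on the left-hand side and match the $t$-free right-hand side, one invokes the tracial property of $\tr_s^{(g)}$ (Proposition~\ref{tracia:property}) and the odd parity of $\Dirac^\cE$ to run the McKean--Singer argument $\tfrac{d}{dt}\tr_s^{(g)}S_t=0$.

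The step I expect to be the main obstacle is the combinatorial re-indexing. One must verify carefully that the $G$-translates of the representatives $h_0\in(g)\cap\bigcup_iG_i$ account without multiplicity for all $h\in(g)$ whose fixed-point set meets $\supp(c)$, and this requires using simultaneously the properness of the $G$-action (for finiteness), the precise weights distributed by the particular cut-off function of Lemma~\ref{lem:Specialcut-off}, and the fact that a single conjugacy class $(g)$ may intersect several local groups $G_i$ non-trivially. The analytic ingredients (Gaussian decay, uniform bounds, dominated convergence) are entirely supplied by Section~\ref{sec:DiracOp&HeatKernelAsym}, so the bulk of the real work is concentrated in this bookkeeping.
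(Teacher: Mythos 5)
Your proposal is correct in its major ingredients --- the $(g)$-trace-class estimate via Corollary~\ref{cor:UniformUpperBdSumK(x,gx)}, the application of Proposition~\ref{prop:FormulaFor(g)Trace}, the Gaussian-decay argument to localize the $t\to 0^+$ limit, the dominated-convergence interchange, and the McKean--Singer step to reconcile the $t$-dependence --- and this is also the structure of the paper's proof (which puts McKean--Singer up front: $a(t)=\tr_s^{(g)}S_t$ has $a'(t)=-\tfrac12\tr_s^{(g)}\bigl[\Dirac^\cE,\Dirac^\cE e^{-t(\Dirac^\cE)^2}\bigr]=0$ by Proposition~\ref{tracia:property}, so the $(g)$-trace is constant in $t$ and equals its $t\to0^+$ limit).

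However, the ``combinatorial re-indexing'' obstacle you single out as the main difficulty is imaginary, and the way you propose to resolve it --- ``coalescing'' contributions of distinct $G$-conjugates into a single term per representative of $(g)\cap\bigcup_i G_i$ --- is not what happens and is not needed. The actual reason the sum restricts to $(g)\cap\bigcup_{i=1}^N G_i$ is simpler and more rigid: the particular cut-off function of Lemma~\ref{lem:Specialcut-off} has support inside $\bigcup_i \{[e,u]:u\in U_i\}\subset G\times_{G_i}U_i$, and if $h\in G$ fixes a point $[e,u]$ with $u\in U_i$, then $[h,u]=[e,u]$ in $G\times_{G_i}U_i$ forces $h\in G_i$ outright (not merely up to $G$-conjugacy). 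Thus after the Gaussian localization the surviving indices $h\in(g)$ are \emph{literally} elements of $\bigcup_i G_i$, which is a finite set, and the remaining terms are the individual limits of integrals exactly as written in the statement; there is no translation or multiplicity bookkeeping to do. (Lemma~\ref{lem:(g)capG_iNonempty} is only the weaker statement that $(g)$ meets some $G_i$; it is not the lemma doing the work here.) Your appeal to Remark~\ref{rem:IndependentOfcut-offFunction} and to ``the precise weights distributed by the cut-off function'' is therefore a red herring for this proposition --- one is not comparing or merging integrals over different conjugates, one is just throwing away the terms that go to zero. If you remove this imagined step, your argument becomes the paper's proof.
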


\begin{proof}
Following from Definition~\ref{def:(g)trace}, let us estimate the sum $\sum_{h\in(g)}|\Tr_s[h^{-1}\phi S_t\psi]|$ where $\phi, \psi\in C^{\infty}_c(\f{X})$ as follows:
\begin{equation}
\label{eq:AbsoluteConvergent}
\begin{split}
\sum_{h\in(g)}|\Tr_s[h^{-1}\phi S_t\psi ]|\le &\sum_{h\in(g)}\int_{\f{X}}^{orb}\phi(hx)\psi(x)\dim(\cE)\|h^{-1}K_t(hx,x)\|_{\cE_x}d\vol_{\f{X}}(x)\\
\le &\dim\cE\|\phi\|_{L^\infty}\int_{\f{X}}^{orb} \psi(x)\sum_{h\in G}\|h^{-1}K_t(hx, x)\|_{\cE_x}d\vol_{\f{X}}(x) \\
\le & L\dim\cE\|\phi\|_{L^\infty} \|\psi\|_{L^1}  <\infty.
\end{split}
\end{equation}
Here, in the last inequality, we have used the uniform upper estimate in Corollary~\ref{cor:UniformUpperBdSumK(x,gx)}.
Therefore, the heat operator $S_t=e^{-t(\Dirac^\cE)^2}$  on $\f{X}$ is of $(g)$-trace class for all $g\in G$ and $t>0$.

Let $a(t) =  \tr_s^{(g)} S_{t}$, then 
\[\begin{array}{lll}
\dfrac{d a(t)}{dt} &  = & -   \tr_s^{(g)}  ((\Dirac^\cE)^2 e^{-t(\Dirac^\cE)^2})\\[2mm]
&=&  - \frac 12  \tr_s^{(g)}  ([ \Dirac^\cE ,  \Dirac^\cE e^{-t(\Dirac^\cE)^2}])
\end{array}
\]
which vanishes by  Proposition \ref{tracia:property}. So the function $a(t) =  \tr_s^{(g)} S_{t}$ is constant in $t$. 
Then, by Proposition~\ref{prop:FormulaFor(g)Trace}, we 
have 
\begin{equation}
\label{eq:InProof}
 \tr_s^{(g)} S_{t}=\lim_{t\to0^+}\sum_{h\in(g)}\int_{\f{X}}^{orb}c(x)\Tr_s [h^{-1}K_t(hx, x)] d\vol_{\f{X}}(x).
\end{equation}
Using the same argument as~(\ref{eq:AbsoluteConvergent}), the sum in~(\ref{eq:InProof}) is absolute convergent.
Hence, when we take the limit of the sum, the limit commute with the infinite sum as well as the integral $\int_{\f{X}}^{orb}$.
Further, the limit commutes with the integral.
As $t\to0^+$ then from~(\ref{eq:VanishingHeatKernel}) the summand in~(\ref{eq:(g)traceIntegralHeatKernel}) tends to $0$ if $h$ does not fix any point in $\f{X}.$
The nonzero ones are the ones where $hx=x$ for some $x\in\f{X}$.
By Lemma~\ref{lem:(g)capG_iNonempty} we see it happens only when $h\in(g)\cap G_i$ for some $i$. 
Thus, there are only finitely many summand and
\begin{equation*}
\tr_s^{(g)} S_{t}
=\sum_{h\in(g)\cap (\cup_{i=1}^N G_i)}\lim_{t\to0^+}\int_{\f{X}}^{orb}c(x)\Tr_s [h^{-1}K_{S_{t}}(hx, x)] d\vol_{\f{X}}(x).
\end{equation*}
The proposition is then proved.
\end{proof}

\begin{corollary}
Let $S_{0,t}:=\sum_{g\in G}g\cdot(c^{\frac12}S_tc^{\frac12})$ where $S_t=e^{-t(\Dirac^\cE)^2}$. 
Then 
\begin{equation*}
  \tr_s^{(g)}(S_t) = \lim_{t\to0^+}   \tr_s^{(g)}(S_{0,t}) .
\end{equation*}
\end{corollary}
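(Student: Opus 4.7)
The plan is to compare the Schwartz kernels of $S_{0,t}$ and $S_t$ along the ``twisted diagonal'' $\{(hx,x): x\in\f{X}\}$ for each $h\in(g)$, and to exploit the concentration of the heat kernel near fixed points of $h$ as $t\to 0^+$. Since $S_{0,t}\in\mathbf{S}$ is properly supported and $G$-invariant, Proposition~\ref{prop:FormulaFor(g)Trace} and the kernel formula~(\ref{eq:KernelS_0andS}) from Lemma~\ref{lem:SmoothingToProperlySupportedSmoothing} give
\begin{equation*}
\tr_s^{(g)}(S_{0,t}) = \sum_{h\in(g)}\int_{\f{X}}^{orb} c(x)\sum_{g'\in G}c^{1/2}(g'^{-1}hx)\,c^{1/2}(g'^{-1}x)\,\Tr_s\bigl[h^{-1}K_t(hx,x)\bigr]\,d\vol_{\f{X}}(x).
\end{equation*}
On the other hand, by Proposition~\ref{prop:HeatOp(g)TraceClass}, $\tr_s^{(g)}(S_t)$ is independent of $t$ and equals
\begin{equation*}
\lim_{t\to0^+}\sum_{h\in(g)}\int_{\f{X}}^{orb}c(x)\,\Tr_s\bigl[h^{-1}K_t(hx,x)\bigr]\,d\vol_{\f{X}}(x).
\end{equation*}

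The key observation is that the two expressions differ by
\begin{equation*}
R(t):=\sum_{h\in(g)}\int_{\f{X}}^{orb}c(x)\sum_{g'\in G}\bigl[c^{1/2}(g'^{-1}hx)c^{1/2}(g'^{-1}x)-c(g'^{-1}x)\bigr]\Tr_s\bigl[h^{-1}K_t(hx,x)\bigr]\,d\vol_{\f{X}}(x),
\end{equation*}
since $\sum_{g'\in G}c(g'^{-1}x)=1$ by Definition~\ref{def:cut-offFunction}. I would show that $R(t)\to 0$ as $t\to 0^+$, which combined with the two displays above yields the conclusion. The bracketed factor vanishes precisely when $hx=x$, and by uniform continuity of $c^{1/2}$ together with the fact that $c$ has compact support, this factor can be made uniformly small on any neighbourhood of the fixed-point set $\f{X}^h$; away from $\f{X}^h$ the Gaussian off-diagonal bound in Theorem~\ref{thm:HeatKernelAsympMain} forces $K_t(hx,x)=O(e^{-\delta/t})$ on the support of $c$.

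Concretely, I would split the integration region into a $\varepsilon$-tubular neighbourhood $N_\varepsilon$ of $\f{X}^h$ inside $\supp c$ and its complement. On $N_\varepsilon$, the bracket is bounded by a constant depending on $\varepsilon$ (going to $0$ with $\varepsilon$), while $\int c(x)\|K_t(hx,x)\|\,d\vol_{\f{X}}$ stays uniformly bounded in $t\in(0,1]$ thanks to Corollary~\ref{cor:UniformUpperBdSumK(x,gx)}. On the complement, $K_t(hx,x)$ decays exponentially in $1/t$ uniformly, so that part of $R(t)$ tends to $0$. Moreover, only finitely many $h\in(g)$ contribute nontrivially in the limit (by Lemma~\ref{lem:(g)capG_iNonempty} and equation~(\ref{eq:VanishingHeatKernel})), so the sum over $h$ causes no trouble.

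The main obstacle is ensuring that the manipulations with the double sum over $h\in(g)$ and $g'\in G$ are legitimate; this is handled by the absolute convergence estimate~(\ref{eq:AbsoluteConvergent}) from the proof of Proposition~\ref{prop:HeatOp(g)TraceClass}, which together with the uniform bound in Corollary~\ref{cor:UniformUpperBdSumK(x,gx)} permits interchanging limit, sum, and integral.
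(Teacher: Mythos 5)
Your overall strategy is genuinely different from the paper's: you compute the difference $R(t)=\tr_s^{(g)}(S_{0,t})-\tr_s^{(g)}(S_t)$ directly and try to force it to zero, whereas the paper simply pushes the $t\to0^+$ limit inside the (finite, because $S_{0,t}$ is properly supported) sum over $h\in(g)$, uses Lemma~\ref{lem:SmoothingToProperlySupportedSmoothing} to see that $K_{S_{0,t}}$ and $K_t$ agree on the diagonal (which is the only place the limit survives, namely where $hx=x$), and then quotes the $t$-independence of $\tr_s^{(g)}S_t$ from Proposition~\ref{prop:HeatOp(g)TraceClass}. Your set-up of $R(t)$ and the factorization of the kernel of $S_{0,t}$ are correct, but the crucial estimate on the tubular neighbourhood $N_\varepsilon$ has a genuine gap.

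Specifically, you assert that $\int c(x)\|K_t(hx,x)\|\,d\vol_{\f{X}}$ is uniformly bounded for $t\in(0,1]$ ``thanks to Corollary~\ref{cor:UniformUpperBdSumK(x,gx)}.'' This is false: the constant $L$ in that corollary comes from the estimate in the proof of Theorem~\ref{thm:HeatKernelAsympMain}, which is of the form $L\sim t^{-n/2}\,(\text{bounded})$ on a compact interval $[t_1,t_2]$ and blows up as $t\to0^+$. More concretely, in normal coordinates near a point of $\f{X}^h$ with codimension $k$, the operator norm $\|K_t(hx,x)\|\sim t^{-n/2}e^{-|(1-dh)u_1|^2/4t}$ integrates to something of order $t^{-k/2}$, which diverges. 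The quantity that is actually uniformly integrable is the \emph{supertrace} $|\Tr_s[h^{-1}K_t(hx,x)]|$, and that boundedness is not a consequence of the Gaussian kernel bound at all; it comes from the Getzler-type cancellation in the supertrace (the local index theorem). Your $R(t)$ argument can be repaired by replacing $\|K_t(hx,x)\|$ with $|\Tr_s[h^{-1}K_t(hx,x)]|$, invoking the rescaled asymptotics so that this supertrace concentrates like a Gaussian of fixed total mass on $\f{X}^h$, and then noting the bracket factor contributes an extra $O(|u_1|)$ which gives $O(t^{1/2})$ after integration in the normal direction; but as written, neither the distinction between kernel norm and supertrace nor the required local-index cancellation is invoked, and the cited corollary does not supply the bound you need. (A minor secondary point: the Lipschitz control of $c^{1/2}$ you implicitly use can fail where $c$ vanishes; this is easy to arrange but should at least be remarked.)
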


\begin{proof}
 From Proposition~\ref{prop:FormulaFor(g)Trace} and Lemma~\ref{lem:SmoothingToProperlySupportedSmoothing}, we see that $S_{0,t}$ is of $(g)$-trace class for all $g\in G$. 
As the sum $\sum_{h\in(g)}K_{S_{0,t}}(hx, x)$ has finitely many non-vanishing terms,  
we have
\begin{equation}
\label{eq:(g)traceIntegralHeatKer}
 \lim_{t\to0^+}  \tr_s^{(g)} S_{0,t}=  \sum_{h\in(g)}\int_{\f{X}}^{orb}c(x)\Tr_s [\lim_{t\to0^+}h^{-1}K_{S_{0,t}}(hx, x)] d\vol_{\f{X}}(x).
\end{equation}
Observe that $S_{0,t}$ and $S_t$ have the same diagonal value as $S_t=e^{-t(\Dirac^\cE)^2}$ by Lemma~\ref{lem:SmoothingToProperlySupportedSmoothing}.
In view of Proposition~\ref{prop:HeatOp(g)TraceClass}, we have the same formula for $\tr_s^{(g)} S_t$ as~(\ref{eq:(g)traceIntegralHeatKer}).
Hence, the corollary is then proved. 
\end{proof}

\subsection{Calculation of $(g)$-trace for the heat operator}
\label{(g)-traceHeatOp}
We shall derive the formula for the $(g)$-trace of the heat operator $e^{-t(\Dirac^\cE)^2}$ in terms of the local data on the quotient orbifold $G\backslash\f{X}$.

To calculate the  $(g)$-trace  of the heat operator $S_t=e^{-t(\Dirac^\cE)^2}$, we need to describe the twisted sectors for the quotient
orbfiold $G\backslash \f{X}$.  Let $\cT_{\f{X}}$ be the  set for the twisted sectors of $\f{X}$ consisting of
equivalence classes of conjugacy classes in the local isotropy groups of $\f{X}$.  Then 
the  set for the twisted sectors of $G\backslash \f{X}$ consists of pairs
\[
((g), (h))
\]
where $(g)$ is the conjugacy class of $g$ in $G$ such that $g$ has non-empty fixed points, and
$  (h)  \in  \cT_{\f{X}}$.   We now  describe the twisted sector $(G\backslash \f{X})_{((g), (h))}$. 
  Let  
  \[
  \{( G\times_{G_i} \tilde U_i, H_i, G\times_{G_i} \tilde U_i)\}
\]
 be
   orbifold charts as in Lemma \ref{lem:quotient} such that 
   $ G\times_{G_i} U_i$ consists of a disjoint union of $G$-translate of $U_i$. Suppose  that 
   $g\in G$ has non-empty fixed points in $U_i$, then 
   \[
(g) \cap G_i = (g_i)_{G_i}
\]
for a conjugacy class of $g_i$ in $G_i$. Let $(h) \in \cT_{\f{X}}$ 
have  a representative $(h_i)$,  a conjugacy class in $H_i$.   
  Then 
\ba\label{orbi:chart/G}
\{ (\tilde U_{i}^{g_i, h_i}, Z_{G_i}(g_i) \times Z_{H_i}(h_i))  \}
\na
are  orbifold charts for the twisted sector  $(G\backslash \f{X})_{((g), (h))}$.  

\begin{definition} The {\bf $(g)$-twisted sector} of $G\backslash \f{X}$ is defined to be 
\ba\label{(g)-sector}
(G\backslash \f{X})_{(g)} = \bigsqcup_{(h) \in  \cT_{\f{X}}} (G\backslash \f{X})_{((g), (h))} 
\na
as a sub-orbifold of the inertia orbifold  $I(G\backslash\f{X})$.  Let
\begin{equation}
\label{eq:RestrictionDelocChar}
\hat{A}_{(g)} (\f{X})  \ \text{and} \  \ch^{\cS}_{(g)} (G\backslash \cE) 
\end{equation}
be the restrictions of $\hat{A}_{deloc}(G\backslash \f{X}) $ and $\ch^{\cS}_{deloc} (G\backslash\cE)$ to the 
$(g)$-twisted sector $(G\backslash \f{X})_{(g)} $. 
\end{definition}

\begin{theorem}
\label{thm:HeatKernelAsymptotics}
For all $g\in G,$ the heat operator $S_t=e^{-t(\Dirac^\cE)^2}$ is a $(g)$-trace class operator, and the $(g)$-trace is  given  by
\begin{equation}
\label{eq:LocalizedIndexStrata}
\begin{split}
\tr_s^{(g)} S_{t}=&\lim_{t\to0^+}\sum_{h\in (g)}\disp{\int}_{\f{X}}^{orb}c(x)\Tr_s[h^{-1}K_t(hx, x)]d\vol_{\f{X}}(x) \\
=&\int^{orb}_{(G\backslash \f{X})_{(g)}}\hat{A}_{(g)} (\f{X})   \ch^{\cS}_{(g)} (G\backslash\cE).
\end{split}
\end{equation}
\end{theorem}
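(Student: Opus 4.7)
The first equality in the statement has already been established in Proposition~\ref{prop:HeatOp(g)TraceClass}, so the plan is to compute the right-hand side explicitly. My strategy is a local-index/Lefschetz fixed-point argument carried out chart by chart, followed by a combinatorial reassembly on the quotient orbifold.

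First I would unpack the sum from Proposition~\ref{prop:HeatOp(g)TraceClass} using the orbifold atlas from Lemma~\ref{lem:quotient}. Each $h\in(g)\cap(\cup_i G_i)$ is conjugate to some $g_i\in G_i$ acting from the left on $\tilde U_i$. Using the $G$-invariance property~(\ref{eq:GinvKernel}) to replace $h$ by its representative $g_i$, using the partition of unity attached to the cut-off function~(\ref{eq:Particularcut-off}), and using Lemma~\ref{lem:Propertycut-offFunction}(1) to pass the integral on $\f{X}$ to an orbifold integral on $\tilde U_i$ (divided by $|G_i||H_i|$), I would rewrite
\begin{equation*}
\tr_s^{(g)}S_t=\sum_i\sum_{(g_i)\subset(g)\cap G_i}\frac{1}{|Z_{G_i}(g_i)|}\lim_{t\to 0^+}\frac{1}{|H_i|}\int_{\tilde U_i}\tilde\phi_i(x)\,\Tr_s\bigl[g_i^{-1}\tilde K_t(g_i x,x)\bigr]dx,
\end{equation*}
where $\tilde K_t$ is the lift of $K_t$ to the orbifold chart and $\tilde\phi_i$ is the lift of the partition of unity. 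The factor $1/|Z_{G_i}(g_i)|$ accounts for the $|G_i|/|Z_{G_i}(g_i)|$ distinct elements conjugate to $g_i$ inside $G_i$.

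Second, on each chart $\tilde U_i$ I would apply the standard Lefschetz-type local-index calculation. Expanding $\tilde K_t(g_i x,x)$ using~(\ref{local:kernel}) produces a further sum over $h\in H_i$ of terms $h\tilde{k}_t(g_i x\cdot h^{-1},x)$. Because the operator is a generalized Laplacian and the Lichnerowicz formula~(\ref{D^2}) holds, Theorem~\ref{thm:HeatKernelAsympMain} supplies the off-diagonal Gaussian decay. Thus as $t\to 0^+$ each term localizes to the fixed-point submanifold $\tilde U_i^{g_i,h}$ of the commuting pair $(g_i,h)$. Applying Getzler rescaling (or equivalently the Berline–Vergne/Bismut local index formula, as in~\cite[Chap.~6]{BGV}) in normal coordinates along this fixed set gives the classical limit
\begin{equation*}
\lim_{t\to 0^+}\Tr_s\bigl[(g_i,h)^{-1}\tilde k_t((g_i,h)x,x)\bigr]dx=\hat A_{(g_i,h)}(\tilde U_i)\,\ch^{\cS}_{(g_i,h)}(\tilde{\cE})\Bigm|_{\tilde U_i^{g_i,h}},
\end{equation*}
integrated along $\tilde U_i^{g_i,h}$ with the appropriate equivariant determinant of the normal bundle in the denominator. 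The finite-volume Gaussian, combined with uniform estimates of Corollary~\ref{cor:UniformUpperBdSumK(x,gx)}, justifies exchanging the limit with the integral.

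Third, I would reassemble the contributions using the description~(\ref{orbi:chart/G}) of the twisted sector. Precisely, the charts $\{(\tilde U_i^{g_i,h_i},Z_{G_i}(g_i)\times Z_{H_i}(h_i))\}$ range over pairs where $(g_i)_{G_i}\subset(g)$ and $(h_i)_{H_i}\in\cT_{\f X}$, so summing over $i$, over conjugacy classes $(g_i)\subset G_i$ contained in $(g)$, and over conjugacy classes $(h_i)\subset H_i$—with the two factors $1/|Z_{G_i}(g_i)|$ and $1/|Z_{H_i}(h_i)|$—exactly reproduces the orbifold integration~(\ref{orbi:integral}) on $(G\backslash\f X)_{(g)}=\bigsqcup_{(h)\in\cT_{\f X}}(G\backslash\f X)_{((g),(h))}$. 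By definition~(\ref{eq:RestrictionDelocChar}), the integrand matches $\hat A_{(g)}(\f X)\ch^{\cS}_{(g)}(G\backslash\cE)$.

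The main obstacle will be the middle step: producing the equivariant local index density for a commuting pair $(g_i,h_i)$ of finite-order isometries acting on a Euclidean chart. This is not merely the classical equivariant McKean–Singer computation for one isometry—one must keep careful track of the two commuting actions, show that their simultaneous fixed set carries both the curvatures and the two equivariant normal-bundle determinants, and then verify that the product of these determinants is exactly the denominator defining $\hat A_{\rm deloc}$ restricted to the $(g)$-twisted sector. Once this identification is made, the combinatorial bookkeeping in the third step is routine via Lemma~\ref{lem:Propertycut-offFunction} and Remark~\ref{rem:IndependentOfcut-offFunction}.
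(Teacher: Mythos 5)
Your proposal follows essentially the same route as the paper's proof: reduce via Proposition~\ref{prop:HeatOp(g)TraceClass} to the finitely many fixed-point contributions, unpack the cut-off function~(\ref{eq:Particularcut-off}) and the slice atlas of Lemma~\ref{lem:quotient} to rewrite the $(g)$-trace as chart-wise integrals weighted by $1/|Z_{G_i}(g_i)|$ and $1/|Z_{H_i}(h_i)|$, invoke the standard equivariant heat-kernel local-index expansion (as in \cite{BGV, Bis, LYZ, Ma, PW:NCGCGI.PartII}) on each $\tilde U_i^{g_i,h_i}$, and reassemble via the twisted-sector description~(\ref{orbi:chart/G}). One small refinement in your version is that you allow for $(g)\cap G_i$ to split into several $G_i$-conjugacy classes and sum over them, whereas the paper's display tacitly treats $(g)\cap G_i$ as a single class; otherwise the decomposition, the role of the commuting pair $(g_i,h_i)$, and the bookkeeping are the same.
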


\begin{proof}
If $g\in G$ does not fix any point in $\f{X}$, then by~(\ref{eq:VanishingHeatKernel}), we have $\tr_s^{(g)} S_{t} =0$. On the other hand, $(G\backslash \f{X})_{(g)}$ is an empty set. 
So the integral on the right hand side also vanishes.

If $g\in G$ has a fixed point in $\f{X}$, then by Lemma~\ref{lem:(g)capG_iNonempty} we may assume $g\in G_i$ for some $i.$ 
Let $c$ be the cut-off function of $\f{X}$ given by~(\ref{eq:Particularcut-off}) (See also~Lemma~\ref{lem:Specialcut-off}). 
Let $K_t$ (resp. $\bar K_t$) be the heat kernel of $\Dirac^\cE$ (resp. $\Dirac^{G\backslash \cE}$) and let $\tilde K_t$ be the lift of $\bar K_t$ to $\tilde U_i\times \tilde U_i.$
Then we have 
\begin{equation}
\label{eq:pfSupertrace}
\begin{split}
\tr_s^{(g)} S_{t}=&\sum_{k\in(g)}\disp{\int}_{\f{X}}^{orb}c(x)\Tr_s[k^{-1}K_t(kx, x)]d\vol_{\f{X}}(x)\\
=&\sum_{k\in (g)}\sum_{i=1}^N\int^{orb}_{G_i\times_{G_i}U_i} 
\frac{1}{|G_i|}\varphi_i(x)\Tr_s[k^{-1}K_t(kx, x)]d\vol_{\f{X}}(x).
\end{split}
\end{equation}
By Corollary~\ref{cor:UniformUpperBdSumK(x,gx)}, the sum is absolute convergent.
Hence, when we take the limit of the sum, the limit commutes with the infinite sum.
Then, notice that by~(\ref{eq:VanishingHeatKernel}),  as $t\to0^+$, $\int_{G_i\times_{G_i}U_i}\varphi(x)\Tr_s[k^{-1}K_t(kx, x)]\to 0$ unless there exists a point $x=[m,u]\in G_i\times_{G_i}U_i$ fixed by $k\in G$, that is, 
\[
[m,u]=k[m,u]=[m(m^{-1}km),u],
\]
 which means that $k':=m^{-1}km\in G_i.$ 
In this situation, we have 
\begin{equation*}
k^{-1}K_t(kx, x)=K_t([m,u], [m, m^{-1}kmu])k=\bar K_t([u], k'[u])k'.
\end{equation*}
To simplify the notation, let us still use $k$ for $k'\in(g)_{G_i}.$
Hence, as we take the limit of each summand of~(\ref{eq:pfSupertrace}), it vanishes excepted for the terms $k\in(g)_{G_i}$.
Thus, $\lim_{t\to0^+}\tr_s^{(g)} S_{t}$ is equal to
\begin{equation*}
\sum_{i=1}^N\frac{1}{|G_i|}\frac{1}{|H_i|}\lim_{t\to0^+}\int_{\tilde U_i} \tilde\phi_i(u)\sum_{k\in (g)_{G_i}}\sum_{(h)\in\cT_{\f{X}}\cap H_i}\sum_{l\in(h)}\Tr_s[\tilde K_t(u, kul)kl]du.
\end{equation*}
As $\tilde K_t$ is $G_i$ and $H_i$ invariant, $\tilde K_t$ remains constant on the conjugacy classes in $G_i$ and in $H_i$.  Also, by Lemma~\ref{lem:K^GZ_G=G} we have $\frac{|(g)_{G_i}|}{|G_i|}=\frac{1}{|Z_{G_i}(g)|}$ and $\frac{|(h)_{H_i}|}{|H_i|}=\frac{1}{|Z_{H_i}(h)|}$.
Therefore, we conclude that 
\begin{equation*}
\label{eq:4}
\lim_{t\to0^+}\tr_s^{(g)} S_{t}
=\sum_{i=1}^N\frac{1}{|Z_{G_i}(g)|}\lim_{t\to0^+}\int_{\tilde U_i} \tilde\phi_i(u)\sum_{(h)\in\cT_{\f{X}}\cap H_i}\frac{1}{|Z_{H_i}(h)|}\Tr_s[\tilde K_t(u, guh)gh]du.
\end{equation*}

Applying the standard local index techniques as in \cite{BGV, Bis, LYZ, Ma, PW:NCGCGI.PartII}, we get
\begin{equation*} \begin{array}{lll}&& 
 \disp\sum_{(h)\in\cT_{\f{X}}\cap H_i} \dfrac{1}{|Z_{G_i}(g)|} \dfrac{1}{|Z_{H_i}(h)|}\disp\lim_{t\to0^+}\disp{\int_{\tilde U_i}}  \tilde\phi_i(u) \Tr_s[K_t(u, guh)gh]du\\[3mm] 
& =& \disp{\sum_{(h)\in \cT_{\f{X}} \cap H_i} }  \dfrac{1}{|Z_{G_i}(g)|} \dfrac{1}{|Z_{H_i}(h) |}   \disp{\int_{\tilde U_i^{g, h}}} \tilde\phi_i(u) \hat{A}_{deloc} (\f{X})   \ch^{\cS}_{deloc} (G\backslash\cE) \\[3mm] 
& =& \disp{\int^{orb}_ {(G\backslash\f{X} )_{(g)}}}\bar\phi_i   \hat{A}_{(g)} (\f{X})   \ch^{\cS}_{(g)} (G\backslash\cE),
\end{array}
\end{equation*} 
for $g\in G_i$. 
Therefore,   
\begin{equation*}\begin{array}{lll}
\tr_s^{(g)} S_{t} &=& \disp\lim_{t\to0^+}\tr_s^{(g)} S_{t} \\[2mm]
&= & \disp{\sum_{i=1}^N  \int^{orb}_{(G\backslash \f{X})_{(g)}}} 
\bar\phi_i   \hat{A}_{(g)} (\f{X})   \ch^{\cS}_{(g)} (G\backslash\cE)\\[3mm]
&=& \disp{   \int^{orb}_{(G\backslash \f{X})_{(g)}}} 
    \hat{A}_{(g)} (\f{X})   \ch^{\cS}_{(g)} (G\backslash\cE).\end{array}
\end{equation*}
This completes the proof of the 
 theorem. 
\end{proof}

\begin{remark}
Combining  with the Kawasaki' s orbifold  index theorem,  Theorem~\ref{thm:HeatKernelAsymptotics} implies that
\ba\label{combined:index}
\begin{array}{lll}
\quad\disp{  \sum_{(g) }} \tr_s^{(g)}S_t  &= &    \disp{\sum_{(g) }     \int^{orb}_{(G\backslash \f{X})_{(g)}} }
    \hat{A}_{(g)} (\f{X})   \ch^{\cS}_{(g)} (G\backslash\cE)  \\[2mm]
&=& \disp{\int^{orb}_{I(G\backslash \f{X})} }\hat{A}_{deloc} (G\backslash \f{X})   \ch^{\cS}_{deloc} (G\backslash\cE) 
 = \ind (\Dirac^{G\backslash\cE}).
 \end{array}
 \na
 In Sections \ref{sec:K-theoretic index} and  \ref{sec:Localized indices}, we shall give a $K$-theoretic interpretation of $ \tr_s^{(g)}e^{-t(\Dirac^\cE)^2}$, which is called the localized index of $\Dirac^\cE$ at the conjugacy class $(g)$ of $g$ in $G$. 
\end{remark}

\section{Higher Index Theory for Orbifolds}
\label{sec:K-theoretic index}

In this section,  we propose a   higher  index of a $G$-invaraint Dirac operator 
\[
\Dirac^\cE: L^2(\f{X}, \cE)\rightarrow L^2(\f{X}, \cE)
\]
on  
a complete Riemannian  even dimensional orbifold with a proper, co-compact  and isometric action of a discrete  group $G$. This higher index generalizes the notion of higher index for smooth noncompact  manifolds with
a proper and free co-compact action of a discrete group $G$ as in
\cite{Kasparov:1983} and  \cite{Kasparov:1988dw}.  This  higher index  for smooth noncompact  manifolds 
plays an important role in the  study of the Novikov conjecture and the Baum-Connes conjecture. See \cite{BC}, \cite{BCH} and \cite{Yu}.  

When $X $ is a proper co-compact Riemannian  $G$-manifold with a $G$-equivaraint Clifford module $\cE$,  the Dirac operator
 $\Dirac^\cE_X$ 
gives rise to a $K$-homology class 
\ba\label{K-homology:class}
[(L^2(X, \cE), F=\Dirac_X^\cE\left[(\Dirac_X^\cE)^2+1\right]^{-\frac12}]\in K^0_G(C_0(X)).
\na
The higher index  of $\Dirac^\cE_X$ is defined to be the image of  the homology class (\ref{K-homology:class}) 
under the  higher index map (See \cite{Kasparov:1983}) 
\begin{equation}
\label{eq:AssemblyMfd}
\mu: K^0_G(C_0(X))\longrightarrow K_0(C^*(G)).
\end{equation}
We refer to~\cite{Kasparov:1988dw, Ba:KTOA} for the definition and basic properties of $KK$-group and its relation to $K$-theory and $K$-homology. 
In Section~\ref{sec:AnalyticKIndex}, we shall construct the higher index  map 
  for a complete Riemannian orbifold $\f{X}$  with a proper, co-compact  and isometric action of a discrete  group $G$. Then we calculate  the  higher index of $\Dirac^\cE$ as a $KK$-cycle in Section~\ref{sec:K-IndexD}. After that we shall show that the  higher index  of $\Dirac^\cE$ is related to the orbifold index of $\Dirac^{G\backslash\cE}$ via the trivial representation of $G$ ({\em cf.} Theorem \ref{Thm:TrivialRepK-theoreticIndex=OrbifoldIndex}).  

\subsection{Higher analytic index map}
\label{sec:AnalyticKIndex}
We formulate the higher index in the context of proper co-compact $G$-orbifold following the philosophy of \cite{Kasparov:1983}. 
In view of Lemma~\ref{lem:quotient} on the local structure of the $G$-orbifold $\f{X}$, choose the corresponding proper \'etale grouppoid $\cG=(\cG_1\rightrightarrows \cG_0)$, i.e. $|\f{X}|\cong \cG_0/\cG_1$, which locally looks like ({\em cf.} Remark~\ref{rem:OrbifoldGroupoidStructure})
\begin{equation*}
(G\times_{G_i}\tilde{U_i})\rtimes H_i\rightrightarrows G\times_{G_i}\tilde{U_i}
\end{equation*}
and naturally admits a $G$-action on the left. 
As $G$ acts properly and co-compactly on $\f{X}$, $G$ acts properly and co-compactly on $\cG.$
Let $C_{red}^*(\f{X})$ be the reduced $C^*$-algebra given by Definition \ref{GroupCstarAlgebra}. 
As the reduced $C^*$-norm is preserved under the left action of $G$ on $C_c^{\infty}(\cG_1)$
\begin{equation*}
(h\cdot f)(g)=f(h^{-1}\cdot g)\qquad\forall h\in G, \forall f\in C_c^{\infty}(\cG_1),
\end{equation*}
the $G$-action extends to $C_{red}^*(\f{X})$.
The convolution algebra $C_c(G, C_{red}^*(\f{X}))$ is represented as a set of bounded operators on $L^2(G, C_{red}^*(\f{X}))$ given by the integration of the left regular representation of $G$ on $L^2(G, C_{red}^*(\f{X}))$
\[
C_c(G, C_{red}^*(\f{X}))\longrightarrow\cB(L^2(G, C_{red}^*(\f{X}))).
\]
Denote by $C_{red}^*(\f{X})\rtimes_r G$ the closure of $C_c(G, C_{red}^*(\f{X}))$  
  under the operator norm of $\cB(L^2(G, C_{red}^*(\f{X})))$. 
Denote by $C_{red}^*(\f{X})\rtimes G$ the closure of the maximal operator norm of all the covariant representations of the convolution algebra $C_c(G, C_{red}^*(\f{X})).$

A first ingredient is a projection in $C_{red}^*(\f{X})\rtimes G$ constructed from a cut-off function $c$.
Let $\cP$ be the closure of $C_c^{\infty}(\cG_1)$ under the norm of $C_{red}^*(\f{X})\rtimes G$. 
Then $\cP$ is a Hilbert $C_{red}^*(\f{X})\rtimes G$-module.  
Let $c\in C_c^{\infty}(\cG)$ be a cut-off function of $\f{X}$ associated to the $G$ action ({\em cf.} Definition~\ref{def:cut-offFunction}) and $s: \cG_1\rightarrow\cG_0$ be the source map. 
Then the pullback function 
\begin{equation*}
s^*c(\gamma):=c(s(\gamma)), \quad \forall \gamma\in \cG_1
\end{equation*}
gives rise to a ``cut-off" function in $C^{\infty}_c(\cG_1),$ which satisfies 
\begin{equation*}
\sum_{g\in G}s^*c(g^{-1}\gamma)=\sum_{g\in G}c(s(g^{-1}\gamma))=\sum_{g\in G}c(g\cdot s(\gamma))=1.
\end{equation*}
Define a projection $p\in C_c(G, C_c^{\infty}(\cG_1))\subset C_{red}^*(\f{X})\rtimes G$ by 
\begin{equation}\label{eq:Projection}
p(g)(x)=[s^*c(g^{-1}x)s^*c(x)]^{\frac12}, \quad \forall g\in G, \forall x\in\cG_1.
\end{equation}
The Hilbert module $\cP$ is related to $p$. In fact, the image of the convoluting operation of $p$ on $C_{red}^*(\f{X})\rtimes G$ is $\cP$, i.e., 
\begin{equation}
\label{eq:p&P}
p\cdot [C_{red}^*(\f{X})\rtimes G]=\cP.
\end{equation}
They represent the same element in the following identifications.
\begin{equation}
\label{eq:ProjCuttoffKTheory}
\begin{split}
KK(\C, C_{red}^*(\f{X})\rtimes G)&  \cong K_0(C_{red}^*(\f{X})\rtimes G)\cong K_0(C_{red}^*(G\backslash \f{X}))\\
[(\cP, 1_{\C}, 0)]&  \mapsto[p]\mapsto[1].
\end{split}
\end{equation}

The second ingredient is the analytic $K$-homology of $C^*_{red}(\f{X}).$
Let $H$ be a $\Z/2\Z$-graded Hilbert space equipped with a unitary representation $\pi$ of the group $G$ and with a $*$-homomorphism $\phi: C^{*}_{red}(\f{X})\rightarrow\cB(H)$, where the two representations respect the $\Z/2\Z$-grading and are compatible with the action of $G$ on $C^*_{red}(\f{X})$:
\begin{equation*}
\phi(g\cdot a)=\pi(g)\phi(a)\pi(g)^{-1}\qquad \forall g\in G, \forall a\in C^*_{red}(\f{X}).
\end{equation*}
Let $F=\begin{pmatrix}0 & F_- \\ F_+ & 0 \end{pmatrix}$, where $F_+^*=F_-$  be a bounded operator on $H$ such that
\begin{equation}
\label{eq:KHomologyConditions}
\phi(a)(F^2-1), [\phi(a), F], [\pi(g), F]\in\cK(H)\qquad \forall a\in C^*_{red}(\f{X}), \forall g\in G.
\end{equation}
Here $\cK(H)$ is the set of compact operators on $H$ and $[\cdot, \cdot]$ is the graded commutator.
Then the triple $(H, \pi, F)$ gives rise to a $K$-homology element in $K^0_G(C^*_{red}(\f{X}))$. 
This $K$-homology cycle is the abstract model for $G$-invariant elliptic pseudo-differential  operators of order $0$ on $\f{X}$. 
Denote by $[F]$ the equivalence class $[(H, \pi, F)]\in K^0_G(C^*_{red}(\f{X})).$

\begin{definition}[\bf Analytic index \cite{Kasparov:2008}]
The {\bf analytic $K$-theoretic index map} is the homomorphism $\mu$
\begin{equation}\label{eq:K-theoreticIndex}
\mu: K_G^0(C_{red}^*(\f{X}))\longrightarrow K_0(C^*(G))
\end{equation}
from a $K$-homology element $[F]\in K_G^0(C_{red}^*(\f{X}))$ to the $KK$-product of the following elements 
 \begin{equation*}
 [\cP]\in KK(\C, C^*_{red}(\f{X})\rtimes G) \quad \text{and} \quad  j^G([F])\in KK_0(C_{red}^*(\f{X})\rtimes G, C^*(G)).
 \end{equation*}
 Here, $j^G$ given by
\begin{equation}
j^G: KK^G(C_{red}^*(\f{X}), \C)\longrightarrow KK_0(C_{red}^*(\f{X})\rtimes G, C^*(G))
\end{equation}
the descent homomorphism ({\em cf.} \cite{Kasparov:1988dw} 3.11).
\end{definition}

In order to accommodate the localized indices to be introduced in the next section, we introduce some variations of the analytic index map taking values in $K$-theory of some completions of $\C G$ in some other norms.

First of all, recall that the left regular representation of $L^1(G)$ on $L^2(G)$ extends to a natural surjective $*$-homomorphism $r: C^*(G)\rightarrow C^*_r(G)$, which gives rise to a $K$-theory homomorphism 
\begin{equation*}
r_*: K_0(C^*(G))\longrightarrow K_0(C^*_r(G)).
\end{equation*}
Composing $r_*$ with~(\ref{eq:K-theoreticIndex}) gives rise to the reduced version of the analytic index map.

\begin{definition}[\bf Analytic index (reduced version)]
The {\bf reduced} analytic $K$-theoretic index map is the homomorphism $\mu_{red}:=r_*\circ\mu$
\begin{equation}\label{eq:K-theoreticIndexred}
\mu_{red}: K_G^0(C_{red}^*(\f{X}))\longrightarrow K_0(C^*_r(G))
\end{equation}
from a $K$-homology element $[F]\in K_G^0(C_{red}^*(\f{X}))$ to the $KK$-product of $r_*[\cP]\in KK(\C, C^*_{red}(\f{X})\rtimes_r G)$ with
 \begin{equation*}
    j_r^G([F]):=r_*\circ j^G[F]\in KK_0(C_{red}^*(\f{X})\rtimes_r G, C^*_r(G)).
 \end{equation*}
\end{definition}

\begin{remark}
Observe that the $K$-theory element $r_*[\cP]=[r\circ\cP]$ is represented by $\cP_{red}:= r(\cP)$, which is the $C_{red}^*(\f{X})\rtimes_r G$-module given by $r(\cP)=p[C^*_{red}(\f{X})\rtimes_r G]$ in view of~(\ref{eq:p&P}).
\end{remark}

Moreover, assume that $\cS(G)$ is a Banach algebra containing $\C G$ as a dense subalgebra. Then there is a condition for $\cS(G)$ due to Lafforgue~\cite{LafforgueThesis}, which gives rise to the Banach algebra version of the analytic index map.

\begin{definition}[{\bf Unconditional completion} \cite{LafforgueThesis}] Let $\cS(G)$ be a Banach algebra equipped with a norm $\|\cdot\|_{\cS(G)}$ and containing $\C G$ as a dense subalgebra. Then $\cS(G)$ is called an {\bf unconditional completion} of $\C G$ if for any $f_1, f_2\in\C G$ satisfying $|f_1(g)|\le|f_2(g)|, \forall g\in G$, we have $\|f_1\|_{\cS(G)}\le\|f_2\|_{\cS(G)}.$ 
\end{definition}

Let $B$ is a $G$-Banach-algebra with norm $\|\cdot\|_B$. Denote by $\cS(G, B)$ the completion of $C_c(G, B)$ with respect to the norm 
\begin{equation*}
\|\sum_{g\in G}a_gg\|:=\|\sum_{g\in G}\|a_g\|_{B}g\|_{\cS(G)}, \quad  \sum_{g\in G}a_g g\in C_c(G, B).
\end{equation*}
If $\cS(G)$ is an unconditional completion, then by~\cite{LafforgueThesis} the descent map formulated using Banach $KK$-theory is well-defined 
\begin{equation}
\label{eq:descentBanKK}
j^{G}_{\cS(G)}: KK(C^*_{red}(\f{X}), \C)\rightarrow KK^{ban}(\cS(G, C^*_{red}(\f{X})), \cS(G)).
\end{equation}
In~(\ref{eq:descentBanKK}) we used the $KK$-group $KK^{ban}_G(A, B)$ associated to two $G$-Banach algebras $A$ and $B$. The group is defined by generalized Kasparov cycles of the form $(E, \phi, F)$ modulo suitable equivalence relations~\cite{LafforgueThesis}. 
Here, $E$ is a $\Z/2\Z$-graded Banach $B$-module and $F\in\cB(E)$ is an odd selfadjoint operator. In addition, $G$ represents in $\cB(E)$ as a grading preserving unitary representation and $\phi: A\rightarrow\cB(E)$ is a grading preserving homomorphism.

 Denote by $\cK(E)$ the compact operator over $E$, then the triple $(E, \phi, F)$ is a {\bf generalized Kasparov cycle} if 
\begin{equation*}
[\phi(a), F], \phi(a)(F^2-1), [\pi(g), F]\in\cK(E) \qquad \forall a\in A, \forall g\in G.
\end{equation*} 
Analogously, the Banach  algebra version of analytic index map can be defined as follows.

\begin{definition}[{\bf Analytic index (Banach algebra version)} \cite{LafforgueThesis}]
\label{def:BanachVHigherIndex}
Let $\cS(G)$ be a unconditional completion of $\C G$. 
The {\bf Banach algebra  version} of the analytic $K$-theoretic index map is the homomorphism $\mu_{\cS(G)}$
\begin{equation}\label{eq:K-theoreticIndexBan}
\mu_{\cS(G)}: K_G^0(C_{red}^*(\f{X}))\longrightarrow K_0(\cS(G))
\end{equation}
from a $K$-homology element $[F]\in K_G^0(C_{red}^*(\f{X}))$ to the $KK$-product of the element 
\begin{equation*}
[\cP_{\cS(G)}]\in KK^{ban}(\C, \cS(G, C^*_{red}(\f{X}))),
\end{equation*} 
represented by the Banach $\cS(G, C^*_{red}(\f{X}))$-module $\cP_{\cS(G)}:=p[\cS(G, C^*_{red}(\f{X}))]$,
with the element 
\begin{equation*}
j^G_{\cS(G)}([F])\in KK^{ban}(\cS(G, C^*_{red}(\f{X})), \cS(G))
\end{equation*}
given by~(\ref{eq:descentBanKK}).
\end{definition}

\begin{remark}
The Banach algebra $L^1(G)\subset C^*_r(G)$ is an unconditional completion of $\C G$. 
In general, if we have an inclusion $i^{\cS, C_r^*}: \cS(G)\rightarrow C^*_{r}(G)$ for $\cS(G)$, then the (reduced) higher index and the Banach algebra version higher index are related as follows 
\begin{equation*}
r_*\mu[F]=\mu_{red}[F]=i^{\cS, C_r^*}_{*}\mu_{\cS(G)}[F], \qquad \forall [F]\in K_G^0(C^*_{red}(\f{X})).
\end{equation*}  
\end{remark}

\subsection{Higher  index for a discrete group action on a non-compact orbifold}
\label{sec:K-IndexD}
Given the abstract setting of the higher index maps in Section~\ref{sec:AnalyticKIndex}, we shall describe the $K$-homological cycles in $K^0_G(C_{red}^*(\f{X}))$ of the Dirac operator $\Dirac^{\cE}$ and its analytic index in $K_0(C^*(G))$ and in $K_0(\cS(G))$ in terms of (generalized) Kasparov cycles.

Let $G$ be a discrete group acting properly, co-compactly and isometrically on a complete Riemannian orbifold $\f{X}$. 
Let $\cE$ be a Hermitian orbifold vector bundle over $\f{X}$. 
The Hilbert space $L^2(\f{X}, \cE)$ is the completion of the compactly supported smooth sections $\Gamma_c(\f{X}, \cE)$ with respect to the following inner product
\begin{equation}
\label{eq:GammaXEHilbertSpIP}
\begin{split}
&\langle f, g\rangle_{L^2}=\int_{\f{X}}^{orb}\langle f(x), g(x) \rangle_{\cE_x} d\vol_{\f{X}}(x) \qquad \forall f, g\in\Gamma_c(\f{X}, \cE).
\end{split}
\end{equation}
It is a $G$-algebra with the action given by
\begin{equation*}
[g\cdot f](x):=gf(g^{-1}x), \qquad \forall g\in G, \forall f\in L^2(\f{X}, \cE). 
\end{equation*}
A natural representation $\phi: C_{red}^*(\f{X})\rightarrow\cB(L^2(\f{X}, \cE))$ is given by the following natural action: $\forall m\in C_c^{\infty}(\cG_1), \forall x\in\cG_0$ and $\forall f\in L^2(\f{X}, \cE),$
\begin{equation}\label{eq:RepresentationOrbifoldCStarAlgebra}
[m\cdot f](x)=\sum_{g\in s^{-1}(x)}m(g)(g\cdot f)(x)=\sum_{g\in s^{-1}(x)}m(g)[g(f(g^{-1}x))]\in\tilde\cE_x.
\end{equation}

As we have discussed in Section~\ref{sec:EllipticPDO}, a properly supported $G$-invariant pseudo-differential operator $D_{\f{X}}$ on $\Gamma_c(\f{X}, \cE)\rightarrow \Gamma_c(\f{X}, \cE)$ of order $m$ extends to a bounded linear operator between Sobolev spaces
\begin{equation*}
D_{\f{X}}: L^2_k(\f{X}, \cE)\rightarrow L^2_{k-m}(\f{X}, \cE)
\end{equation*} 
for all $k\ge m.$
By the compact embedding theorem, for any $f\in C_c(\cG_1)$ and for any pseudodifferntial operator $D_{\f{X}}$ of negative order, the operator $\phi(f)D_{\f{X}}$ is compact.

\begin{lemma}\label{le:KHomologyElement}
Let $\Dirac^{\cE}$ be a Dirac operator on $\f{X}$.
Then the Hilbert space $L^2(\f{X}, \cE)$, the representation ~(\ref{eq:RepresentationOrbifoldCStarAlgebra}) and the operator $F=\frac{\Dirac^{\cE}}{\sqrt{1+(\Dirac^{\cE})^2}}$ form a cycle in the $K$-homology group 
\begin{equation*}
[\Dirac^{\cE}]:=[(L^2(\f{X}, \cE), \phi, F)]\in K^0_G(C_{red}^*(\f{X})).
\end{equation*}
\end{lemma}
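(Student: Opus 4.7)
The plan is to verify the three conditions recalled just before~\eqref{eq:KHomologyConditions}: for every $a\in C^*_{red}(\f{X})$ and every $g\in G$, the operators $\phi(a)(F^2-1)$, $[\phi(a),F]$ and $[\pi(g),F]$ are compact on $L^2(\f{X},\cE)$, and the representation $\phi$ is $G$-covariant. Covariance $\phi(g\cdot a)=\pi(g)\phi(a)\pi(g)^{-1}$ follows by direct inspection of~\eqref{eq:RepresentationOrbifoldCStarAlgebra}. Since $\Dirac^{\cE}$ is essentially self-adjoint (Section~\ref{sec:EllipticPDO}) and odd with respect to the grading $\cE=\cE_+\oplus\cE_-$, Borel functional calculus makes $F$ a well-defined, self-adjoint, odd bounded operator, with $F^2-1=-(1+(\Dirac^{\cE})^2)^{-1}$.

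The condition $[\pi(g),F]=0$ is immediate: $\Dirac^{\cE}$ is $G$-invariant, hence commutes with $\pi(g)$, and so does any bounded Borel function of it. For $\phi(a)(F^2-1)$, I would reduce by density and boundedness of $F^2-1$ to the case $a\in C^{\infty}_c(\cG_1)$. From~\eqref{eq:RepresentationOrbifoldCStarAlgebra} one sees that $\phi(a)$ is a locally finite combination of pointwise multiplication by smooth compactly supported functions and of the (isometric) action of finitely many groupoid arrows. Consequently $\phi(a)(1+(\Dirac^{\cE})^2)^{-1}$ takes $L^2(\f{X},\cE)$ into $L^2_1$-sections supported in a fixed relatively compact subset of $\f{X}$; by Rellich--Kondrachov applied inside orbifold charts, the resulting operator is compact.

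The main obstacle is compactness of $[\phi(a),F]$, for which I plan to employ the Baaj--Julg type integral representation
\begin{equation*}
F \;=\; \frac{1}{\pi}\int_0^{\infty} \Dirac^{\cE}\bigl(\lambda+1+(\Dirac^{\cE})^2\bigr)^{-1}\,\frac{d\lambda}{\sqrt{\lambda}}
\end{equation*}
together with the resolvent identity to rewrite $[\phi(a),F]$ as a norm-convergent integral whose integrand is built from $[\phi(a),\Dirac^{\cE}]$ sandwiched between resolvents. The task reduces to showing that $[\phi(a),\Dirac^{\cE}]$ extends to a bounded operator of relatively compact $\f{X}$-support. Locally in an orbifold chart, $\Dirac^{\cE}$ is a first-order differential operator and, since the groupoid arrows act by isometries of the Dirac bundle, the commutator picks up only the derivative of the smooth compactly supported kernel of $a$ along $\cG_1$; this gives a zeroth-order operator of compact support as required. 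Combined with the compactness of the resolvents already established, each integrand is compact and the integral converges in operator norm, yielding $[\phi(a),F]\in\cK(L^2(\f{X},\cE))$.

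The chief technical burden is the local analysis of $\phi(a)$ for $a\in C^{\infty}_c(\cG_1)$: one must write $\phi(a)$ explicitly on orbifold charts as a locally finite sum over arrows of $\cG_1$ and carefully track how this sum interacts with $\Dirac^{\cE}$ through the Clifford connection. Everything else---the resolvent compactness, functional calculus with $\pi(g)$, and the Sobolev embedding on orbifold charts---is standard given the material of Section~\ref{sec:EllipticPDO}.
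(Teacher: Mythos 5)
Your proof is correct, but it takes a genuinely different and more self-contained route than the paper's. The paper's own argument is brief: it asserts that $F=\Dirac^{\cE}(1+(\Dirac^{\cE})^2)^{-1/2}$ is itself a pseudo-differential operator of order $0$ and that $1-F^2$ has order $-2$, records the vanishing of $[\pi(g),F]$, and then declares that $[F,\phi(f)]\in\cK$ is ``straightforward to check,'' deferring the remaining verifications (for $f$ in the dense subalgebra $C_c^{\infty}(\cG_1)$) to the manifold case treated in Blackadar. This implicitly invokes the nontrivial fact that the bounded functional calculus of an elliptic operator stays inside the pseudo-differential calculus. Your proposal instead avoids that appeal entirely by using the Baaj--Julg integral representation of $F$ and the resolvent identity, which reduces everything to the first-order commutator $[\phi(a),\Dirac^{\cE}]$; you then observe that this commutator is, on charts, a zeroth-order Clifford-multiplication-type operator of proper support, and handle compactness by a Rellich--Kondrachov argument localized in the charts. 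The payoff of your version is that the delicate step (compactness of $[\phi(a),F]$) becomes an explicit norm-convergent integral of manifestly compact operators, with all $\Psi$DO functional calculus replaced by elementary operator estimates of the form $\|\Dirac^{\cE}R_\lambda\|=O(\lambda^{-1/2})$, $\|R_\lambda\|=O(\lambda^{-1})$; the price is length. One small imprecision: you say ``combined with the compactness of the resolvents already established''---the resolvents themselves are not compact on the non-compact orbifold; what you established (and what you actually need) is the compactness of $\phi(a)(1+(\Dirac^{\cE})^2)^{-1}$ and, more generally, of the products appearing in the expanded integrand in which a compactly supported factor is always present. With that phrasing corrected, the argument is complete.
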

\begin{proof}
Note that $\Dirac^{\cE}=\begin{pmatrix}0 & \Dirac^{\cE}_- \\ \Dirac^{\cE}_+ & 0 \end{pmatrix}$ is   a $G$-invariant   Dirac operator on $L^2(\f{X}, \cE).$ Then $F=\frac{\Dirac^{\cE}}{\sqrt{1+(\Dirac^{\cE})^2}}$ is an order $0$ pseudo-differential operator on $L^2(\f{X}, \cE).$
 $F$  can be extended  to a bounded operator on $L^2(\f{X}, \cE)$. 
Observe that $1-F^2=(1+(\Dirac^{\cE})^2)^{-1}$ is a pseudo-differential operator of order $-2$.
Thus, 
\begin{equation*}
\phi(f)(1-F^2)\in\cK(L^2(\f{X}, \cE)) \qquad \forall f\in C_0(\f{X}) =  C_0(\cG).
\end{equation*}
Note that by the $G$-invariance of $F$, we have $[F, \pi(g)]=0, \forall g\in G$. 
 It is straightforward to check $[F, \phi(f)]\in\cK(L^2(\f{X}, \cE)), \forall f\in C_c(\cG_1)$. 
For  the conditions in~(\ref{eq:KHomologyConditions}), we only need to check  for $f\in C_c(\cG_1)$ the dense subalgebra of $C^*_{red}(\f{X})$, which can be done as in  \cite{Ba:KTOA} for  the manifold cases.
\end{proof}

\begin{definition} 
The {\bf higher index} of $\Dirac^{\cE}$,denoted by $\Ind \Dirac^{\cE}$,  is defined to be
\[
 \Ind \Dirac^{\cE} = \mu ([\Dirac^{\cE}]) \in K_0(C^*(G)), 
 \]
 the image in $K_0(C^*(G))$ of $[\Dirac^{\cE}]$ under  the analytic index map (\ref{eq:K-theoreticIndex}).
 \end{definition}

We now  calculate the  higher  index of $\Dirac^{\cE}$, as a  $KK$-cycle in $K_0(C^*(G))\cong KK(\C, C^*(G))$,  following from the work of Kasparov~\cite{Kasparov:2008}.

Recall that in~(\ref{eq:GammaXEHilbertSpIP}) the  algebra $\Gamma_c(\f{X}, \cE)$ is equipped with a pre-Hilbert space inner product. However, this inner product is not sufficient for us to derive an analogous Fredholm property for $\Dirac^{\cE}$. 
Instead, we equip $\Gamma_c(\f{X}, \cE)$ a $\C G$-valued inner product given by 
\begin{equation}
\label{eq:C_cGInnerProduct}
\begin{split}
\langle f_1, f_2\rangle_{\C G}(g):=&\int_{\f{X}}^{orb}(f_1(x), g(f_2(g^{-1}x)))_{\cE_x} d\vol_{\f{X}}(x) \\
(f\cdot b)(x):=&\sum_{g\in G} g(f(g^{-1}x))b(g^{-1})  
\end{split}
\end{equation}
for all $f, f_1, f_2\in \Gamma_c(\f{X}, \cE), g\in G,$ and $b\in \C G, x\in\cG_0.$
It is routine to check that~(\ref{eq:C_cGInnerProduct}) gives rise to a pre-Hilbert $\C G$-module. For example, by~(\ref{eq:C_cGInnerProduct}) we have 
\begin{equation*}
\langle f_1, f_2 b\rangle_{\C G}=\langle f_1, f_2\rangle_{\C G}b,\qquad  \forall f_1, f_2\in\Gamma_c(\f{X}, \cE), b\in\C G.
\end{equation*}

Denote by $\cA$ the completion of $\Gamma_c(\f{X}, \cE)$ in the Hilbert $C^*(G)$-norm given by the inner product~(\ref{eq:C_cGInnerProduct}).
Let $c\in C^{\infty}_c(\cG)$ be a cut-off function of $\f{X}$ with respect to the $G$-action, where the pullback $s^*c^{\frac12}\in C_c^{\infty}(\cG_1)$ acts on $g\cdot e\in L^2(\f{X}, \cE)$ in the sense of (\ref{eq:RepresentationOrbifoldCStarAlgebra}).
Then there is an inclusion $\iota: \Gamma_c(\f{X}, \cE)\hookrightarrow C_c(G, L^2(\f{X}, \cE))$ given by
\begin{equation}
\label{eq:injection}
\iota(e)(g)=c^{\frac12}\cdot [g\cdot e] \qquad \forall e\in \Gamma_c(\f{X}, \cE), \forall g\in G.
\end{equation}
Denote by $L^2(\f{X}, \cE)\rtimes G$ the maximal crossed product as the completion of $C_c(G, L^2(\f{X}, \cE))$ under the norm $\|\sum_{g\in G}\|a_g\|_{L^2}g\|_{max}$ for $f=\sum_{g\in G}a_g g\in C_c(G, L^2(\f{X}, \cE))$, where $\|\cdot\|_{max}$ is the norm of $C^*(G)$.
It is a Hilbert $C^*(G)$-module.

\begin{proposition}[\cite{Kasparov:2008}]
The inclusion map $\iota$ given by (\ref{eq:injection}) extends to a injective homomorphism between Hilbert $C^*(G)$-modules 
\begin{equation*}
\iota: \cA\hookrightarrow L^2(\f{X}, \cE)\rtimes G.
\end{equation*}
Moreover, $\cA$ is a direct summand of $L^2(\f{X}, \cE)\rtimes G$ as a Hilbert $C^*(G)$-submodule.
\end{proposition}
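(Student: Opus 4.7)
The plan is to establish the two assertions in sequence: first the isometry property of $\iota$, and then the construction of an adjoint yielding a projection onto $\iota(\cA)$. The decisive tool in both steps is the defining property $\sum_{g \in G} c(g^{-1}x) = 1$ of the cut-off function, which converts the $G$-redundancy in $\Gamma_c(\f{X}, \cE)$ into the exact $C^*(G)$-module structure on $L^2(\f{X}, \cE)\rtimes G$.

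\textbf{Step 1 (isometric embedding).} The Hilbert $C^*(G)$-module $L^2(\f{X}, \cE) \rtimes G$ carries the canonical inner product $\langle \xi, \eta\rangle(g) = \sum_{h \in G}\langle \xi(h), \eta(hg)\rangle_{L^2}$ on $C_c(G, L^2(\f{X}, \cE))$. I would compute $\langle \iota(e_1), \iota(e_2)\rangle(g)$ directly by plugging in $\iota(e_i)(h) = c^{1/2}(h \cdot e_i)$. Interchanging sum and integral, using unitarity of the $G$-action on $\cE$ pointwise, and making the change of variable $y = h^{-1}x$ (valid since $G$ acts isometrically and hence $d\vol_{\f{X}}$ is $G$-invariant), the expression collapses via $\sum_{h \in G} c(hy) = 1$ to $\int_{\f{X}}^{orb} \langle e_1(y), g \cdot e_2(g^{-1}y)\rangle_{\cE_y} d\vol_{\f{X}}(y)$, which is exactly $\langle e_1, e_2\rangle_{\C G}(g)$. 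Hence $\iota$ is isometric on the dense subspace $\Gamma_c(\f{X}, \cE)$ and extends to an isometric Hilbert $C^*(G)$-module map $\cA \hookrightarrow L^2(\f{X}, \cE)\rtimes G$, which is automatically injective with closed range.

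\textbf{Step 2 (complement via adjoint).} I would exhibit the adjoint explicitly by setting $\iota^*(\xi) := \sum_{g \in G} g^{-1}(c^{1/2}\, \xi(g))$ for $\xi \in C_c(G, L^2(\f{X}, \cE))$. Two checks are required. First, $\iota^* \iota = \mathrm{id}_{\cA}$: a direct unwinding reduces to $\sum_g c(gx) e(x) = e(x)$ on using the cut-off identity. Second, the adjunction $\langle \iota(e), \xi\rangle_{C^*(G)} = \langle e, \iota^*(\xi)\rangle_{\C G}$: expanding both sides and performing the change of variable $x \mapsto g h^{-1} x$ on the right, together with the unitarity of the $\cE$-valued fiberwise inner product under the diagonal $G$-action, makes the two sums coincide term by term. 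Once adjointability is verified, $P := \iota \circ \iota^*$ is a self-adjoint idempotent on $L^2(\f{X}, \cE) \rtimes G$ with range $\iota(\cA)$, which produces the orthogonal decomposition $L^2(\f{X}, \cE) \rtimes G = \iota(\cA) \oplus \ker P$ and exhibits $\cA$ as a complemented Hilbert $C^*(G)$-submodule.

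\textbf{Main difficulty.} The argument is not deep but bookkeeping-heavy: one must simultaneously track the cut-off multipliers $c^{1/2}$, the left $G$-action on $\Gamma_c(\f{X}, \cE)$ versus the group convolution on $C_c(G, L^2(\f{X}, \cE))$, and the $G$-invariance of both $d\vol_{\f{X}}$ and the fiberwise inner product on $\cE$. The conceptual content is that $\cA$ realizes the same Hilbert module as $p \cdot [L^2(\f{X}, \cE) \rtimes G]$ for the cut-off projection $p$ of~(\ref{eq:Projection}); Step 2 is essentially a verification of this identification at the level of explicit formulas.
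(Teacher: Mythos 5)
Your proposal is correct and follows essentially the same route as the paper: you verify that $\iota$ preserves the $\C G$-valued inner product via the cut-off identity $\sum_{g}c(g^{-1}x)=1$, then exhibit the adjoint explicitly (your $\iota^{*}$ coincides with the paper's $q$ in~(\ref{eq:AdjointOfInjection}) after the reindexing $g\mapsto g^{-1}$) and observe that $\iota^{*}\iota=\mathrm{id}$ makes $\iota\iota^{*}$ the desired self-adjoint idempotent with range $\iota(\cA)$. The only cosmetic difference is that you spell out the adjunction identity, which the paper leaves implicit in the phrase ``admits an adjoint.''
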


\begin{proof}
Notice that $C_c^{\infty}(G, L^2(\f{X}, \cE))$ carries a natural $\C G$-module structure given by convolution.  
It is easy to check that the inclusion $\iota$ is compatible with the pre-Hilbert $\C G$-module structures.
By taking the completion, $\cA$ is regarded as a Hilbert $C^*(G)$-submodule for $L^2(\f{X}, \cE)\rtimes G$.
To show the second claim, note that $\iota$ admits an adjoint $q:=\iota^*$ with respect to the $\C G$-valued inner product on $C_c^{\infty}(G, L^2(\f{X}, \cE))$ given by
\begin{equation}
\label{eq:AdjointOfInjection}
q: C_c^{\infty}(G, L^2(\f{X}, \cE))\rightarrow \Gamma_c(\f{X}, \cE) 
\qquad 
 f\mapsto \sum_{g\in G} c(g^{-1}\cdot)^{\frac12}\cdot \{g\cdot [f(g^{-1})]\}.
\end{equation} 
As a consequence $q\circ \iota$ is identity on $\Gamma_c(\f{X}, \cE)$ and $\iota \circ q$ is the projection from $C_c^{\infty}(G, L^2(\f{X}, \cE))$ to $\Gamma_c(\f{X}, \cE)$.
The projection $\iota\circ q$ extends to $L^2(\f{X}, \cE)\rtimes G$ and has $\cA$ as its image.
The proposition is then proved.
\end{proof}

\begin{remark}
\label{rem:AandP}
The projection map $\iota\circ q$ applied to $L^2(\f{X}, \cE)\rtimes G$ agrees with the left convolution of the projection $p$ (defined in (\ref{eq:Projection})) with $L^2(\f{X}, \cE)\rtimes G.$ 
In other words, we have 
\begin{equation}
\label{eq:AinRem}
p[L^2(\f{X}, \cE)\rtimes G]=\cA.
\end{equation}
\end{remark}
 
\begin{proposition}[\cite{Kasparov:2008}]
\label{prop:HigherIndexKK}
Let $\Dirac^{\cE}$ be a Dirac operator on $\f{X}$ and let $[\Dirac^{\cE}]$ be the $K$-homology element in Lemma~\ref{le:KHomologyElement}. Then the $KK$-cycle of the higher index $\Ind \Dirac^{\cE}\in K_0(C^*(G))$ is given by
\begin{equation}
\label{eq:HigherIndexKK}
[(\cA, 1_{\C}, p\tilde F p)]=[(q(L^2(\f{X}, \cE)\rtimes G), 1_{\C}, q\circ\tilde F\circ i)]\in KK(\C, C^*(G)).
\end{equation}
Here, $\tilde F$ is the lift of the operator $F=\Dirac^{\cE}(1+(\Dirac^{\cE})^2)^{-\frac12}$ to $L^2(\f{X}, \cE)\rtimes G$ given by
\begin{equation}
\label{eq:LiftedOperatorF}
[\tilde F(h)]g=F(h(g)), \qquad \forall h\in C_c(G, L^2(\f{X}, \cE)), \forall g\in G.
\end{equation}
\end{proposition}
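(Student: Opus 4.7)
The plan is to unwind the definition of the higher index as a Kasparov product and to identify each factor concretely. By definition of the analytic index map~(\ref{eq:K-theoreticIndex}),
\[
\Ind \Dirac^{\cE} = \mu([\Dirac^{\cE}]) = [\cP] \otimes_{C^*_{red}(\f{X})\rtimes G} j^G([\Dirac^{\cE}]) \in KK(\C, C^*(G)),
\]
so the proposition reduces to computing each factor explicitly and then their Kasparov product.

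First I would describe $j^G([\Dirac^{\cE}])$ via Kasparov's descent construction applied to the $G$-equivariant cycle $(L^2(\f{X}, \cE), \phi, F)$ from Lemma~\ref{le:KHomologyElement}. The outcome is the Kasparov cycle
\[
j^G([\Dirac^{\cE}]) = [(L^2(\f{X}, \cE)\rtimes G,\ \tilde\phi,\ \tilde F)] \in KK(C^*_{red}(\f{X})\rtimes G,\ C^*(G)),
\]
where $L^2(\f{X}, \cE)\rtimes G$ is regarded as a Hilbert $C^*(G)$-module, $\tilde\phi$ is the induced representation of the crossed product, and $\tilde F$ is the lift of $F$ defined in~(\ref{eq:LiftedOperatorF}); that $(L^2(\f{X}, \cE)\rtimes G, \tilde\phi, \tilde F)$ is again a Kasparov cycle is standard in the descent of~\cite{Kasparov:1988dw}.

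Next I would compute the Kasparov product. Since the cycle $(\cP, 1_\C, 0)$ representing $[\cP]$ carries the zero operator, the product reduces to the internal-tensor-product cycle
\[
[\cP] \otimes_{C^*_{red}(\f{X})\rtimes G} j^G([\Dirac^{\cE}]) = \bigl[\bigl(\cP \otimes_{C^*_{red}(\f{X})\rtimes G} (L^2(\f{X}, \cE)\rtimes G),\ 1_\C,\ 1 \otimes \tilde F\bigr)\bigr].
\]
Because $\cP = p\cdot(C^*_{red}(\f{X})\rtimes G)$, the internal tensor product is canonically identified, as a Hilbert $C^*(G)$-module, with $\tilde\phi(p)\cdot(L^2(\f{X}, \cE)\rtimes G) = p\cdot(L^2(\f{X}, \cE)\rtimes G)$, and by Remark~\ref{rem:AandP} this submodule is exactly $\cA$. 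Under the identification the operator $1\otimes \tilde F$ is transported to the compression $p\tilde F p$ on $\cA$, giving the first representative in~(\ref{eq:HigherIndexKK}).

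Finally, the second form follows from the adjoint pair $(\iota, q)$ in~(\ref{eq:injection})--(\ref{eq:AdjointOfInjection}): one has $q\circ \iota = 1_\cA$ while $\iota\circ q$ agrees with the action of $p$ on $L^2(\f{X}, \cE)\rtimes G$, so the compression $p\tilde F p$ on $\cA$ equals $q\circ \tilde F\circ \iota$ on $q(L^2(\f{X}, \cE)\rtimes G) = \cA$. The main obstacle will be the routine but technical verification that the compressed cycle $(\cA, 1_\C, p\tilde F p)$ satisfies the Kasparov axioms, in particular that $p\tilde F p$ is essentially self-adjoint and $p(1-\tilde F^2)p$ is $C^*(G)$-compact on $\cA$. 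This follows from the corresponding properties of $(L^2(\f{X}, \cE), \phi, F)$ established in Lemma~\ref{le:KHomologyElement}: applying descent to the compactness conditions in~(\ref{eq:KHomologyConditions}) and using that $p$ lies in $C_c(G, C_c^\infty(\cG_1))$ so that $\tilde\phi(p)$ multiplies the compactness conditions into the compact operators on the Hilbert $C^*(G)$-module $L^2(\f{X}, \cE)\rtimes G$, and then restricting to the direct summand $\cA$.
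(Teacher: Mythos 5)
Your proposal is correct and follows essentially the same approach as the paper's proof: unwind $\mu[\Dirac^{\cE}]$ as the Kasparov product $[\cP]\otimes_{C^*_{red}(\f{X})\rtimes G}j^G([\Dirac^{\cE}])$, identify the descent cycle, use Remark~\ref{rem:AandP} to identify $\cP\otimes_{C^*_{red}(\f{X})\rtimes G}(L^2(\f{X},\cE)\rtimes G)$ with $\cA = p\cdot(L^2(\f{X},\cE)\rtimes G)$, and then rewrite the compression $p\tilde F p$ as $q\circ\tilde F\circ\iota$ via the adjoint pair $(\iota,q)$. The only difference is that you spell out the routine verification of the Kasparov axioms for the compressed cycle, which the paper leaves implicit.
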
 
 
\begin{proof} 
By definition of the higher index, we have 
\begin{equation*}
\Ind  \Dirac^{\cE} =\mu[F]=[\cP]\otimes_{C_{red}^*(\f{X})\rtimes G}j^G([L^2(\f{X}, \cE), \phi, F]),
\end{equation*}
where $F=\frac{\Dirac^{\cE}}{\sqrt{1+(\Dirac^{\cE})^2}}$ is the bounded operator on $\Gamma_c(\f{X}, \cE)$.
Denote by $\tilde F$ the lift of the operator $F$ to $L^2(\f{X}, \cE)\rtimes G$ given by~(\ref{eq:LiftedOperatorF}).
Then the image of $[F]$ under the descent map $j^G$ is given by
\begin{equation*}
j^G([L^2(\f{X}, \cE), \phi, F])=[L^2(\f{X}, \cE)\rtimes G, \tilde\phi, \tilde F].
\end{equation*}
Thus, we have
\begin{align*}
\Ind  \Dirac^{\cE}=&[\cP, 1_{\C}, 0]\otimes_{C_{red}^*(\f{X})\rtimes G} [L^2(\f{X}, \cE)\rtimes G, \tilde\phi, \tilde F]\\
=&[\cP\otimes_{C_{red}^*(\f{X})\rtimes G}[L^2(\f{X}, \cE)\rtimes G], 1_{\C}, \cP\tilde F\cP].
\end{align*}

The statement is proved by noting that $\cA$ is the product of the Hilbert $C_{red}^*(\f{X})\rtimes G$-module $\cP$ and the Hilbert $C^*(G)$-module $L^2(\f{X}, \cE)\rtimes G$. 
In fact, by Remark~\ref{rem:AandP}, we have
\begin{equation*}
\label{eq:ProductAlgebra}
\begin{split}
\cP\otimes_{C_{red}^*(\f{X})\rtimes G}[L^2(\f{X}, \cE)\rtimes G]
=&p\cdot[C_{red}^*(\f{X})\rtimes G]\otimes_{C_{red}^*(\f{X})\rtimes G} [L^2(\f{X}, \cE)\rtimes G]\\
=&p\cdot[L^2(\f{X}, \cE)\rtimes G]=\cA.
\end{split}
\end{equation*}

In addition, the compression $\cP\tilde F\cP$ of $\tilde F$ and $\cP$ on $p\cdot[L^2(\f{X}, \cE)\rtimes G]=\cA$ is alternatively written as $q\circ \tilde F\circ \iota$ on $q(L^2(\f{X}, \cE)\rtimes G)\cong\cA.$ This follows from 
$\iota\circ q=p\cdot$ on $L^2(\f{X}, \cE)\rtimes G.$ 
Therefore, the  higher  index of $\Dirac^{\cE}$ is represented by the following $KK$-cycle.
\begin{equation}
\label{eq:HigherIndexKKBan}
[(\cA, 1_{\C}, p\tilde F p)]=[(q(L^2(\f{X}, \cE)\rtimes G), 1_{\C}, q\circ\tilde F\circ i)]\in KK(\C, C^*(G)).
\end{equation}
The proposition is then proved.
\end{proof}

Let $\cS(G)$ be an unconditional completion of $\C G$, under a Banach norm $\|\cdot\|_{\cS(G)}$. Similarly we have the following norm-preserving inclusion between Banach $\cS(G)$-modules extending the map~(\ref{eq:injection})
\begin{equation*}
\iota_{\cS(G)}: \cA_{\cS(G)}\longrightarrow \cS(G, L^2(\f{X}, \cE)).
\end{equation*}
Here, $\cA_{\cS(G)}$ is the Banach $\cS(G)$-module in the same fashion as~(\ref{eq:AinRem}) given by
 \begin{equation}
 \label{eq:A_S(G)}
 \cA_{\cS(G)}:= p[\cS(G, L^2(\f{X}, \cE))].
 \end{equation} 
It is a direct summand of $\cS(G, L^2(\f{X}, \cE)).$
Denote by $\tilde F$ the lift of $F$ from $\Gamma_c(\f{X}, \cE)$ to $\cS(G, L^2(\f{X}, \cE))$ given by~(\ref{eq:LiftedOperatorF}).
Then the Banach algebra version of the higher index is then stated as follows. 

\begin{proposition}
\label{prop:HigherIndexKKBan}
Let $\Dirac^{\cE}$ be a Dirac operator on $\f{X}$ and let $[\Dirac^{\cE}]$ be the $K$-homology element in Lemma~\ref{le:KHomologyElement}. Then the $KK$-cycle of the higher index $\mu_{\cS(G)}[\Dirac^{\cE}]\in K_0(\cS(G))$ is given by the following $KK$-cycle:
\begin{equation}
\label{eq:HigherIndexKKban}
[(\cA_{\cS(G)}, 1_{\C}, p\tilde F p)]=[(q( \cS(G, L^2(\f{X}, \cE))), 1_{\C}, q\circ\tilde F\circ i)].
\end{equation}
Here, $\tilde F$ is the lift of the operator $F=\Dirac^{\cE}[1+(\Dirac^{\cE})^2]^{-\frac12}$ to $\cS(G, L^2(\f{X}, \cE)$ given by~(\ref{eq:LiftedOperatorF}).
\end{proposition}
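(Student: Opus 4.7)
The plan is to mimic the proof of Proposition~\ref{prop:HigherIndexKK} step by step, substituting the Banach analogues of each ingredient and invoking Lafforgue's formalism~\cite{LafforgueThesis} wherever the $C^*$-algebraic Kasparov product was used.

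First I would unwind the definition of $\mu_{\cS(G)}$ from Definition~\ref{def:BanachVHigherIndex}: by construction,
\begin{equation*}
\mu_{\cS(G)}[\Dirac^{\cE}] \;=\; [\cP_{\cS(G)}]\otimes_{\cS(G, C^*_{red}(\f{X}))}\, j^{G}_{\cS(G)}([\Dirac^{\cE}]).
\end{equation*}
Applying the Banach descent map~(\ref{eq:descentBanKK}) to the $K$-homology cycle furnished by Lemma~\ref{le:KHomologyElement} produces the generalized Kasparov cycle
\begin{equation*}
j^{G}_{\cS(G)}([\Dirac^{\cE}]) \;=\; [\,\cS(G, L^2(\f{X}, \cE)),\; \tilde\phi,\; \tilde F\,],
\end{equation*}
where $\tilde F$ is the lift in~(\ref{eq:LiftedOperatorF}) and $\tilde\phi$ is the obvious extension of the representation $\phi$ from~(\ref{eq:RepresentationOrbifoldCStarAlgebra}) to a representation of $\cS(G, C^*_{red}(\f{X}))$ on the Banach module $\cS(G, L^2(\f{X}, \cE))$.

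Next, I would compute the Kasparov product. Since $[\cP_{\cS(G)}]=[\,p\cdot\cS(G,C^*_{red}(\f{X})),\,1_\C,\,0\,]$ and the second factor has trivial left action of $\C$, the product is represented by the internal tensor product together with the compression of $\tilde F$. As in the proof of Proposition~\ref{prop:HigherIndexKK}, the interior tensor product simplifies using Remark~\ref{rem:AandP}:
\begin{equation*}
\bigl(p\cdot \cS(G, C^*_{red}(\f{X}))\bigr)\otimes_{\cS(G,C^*_{red}(\f{X}))}\cS(G, L^2(\f{X},\cE)) \;=\; p\cdot\cS(G, L^2(\f{X},\cE)) \;=\; \cA_{\cS(G)},
\end{equation*}
which is precisely the Banach $\cS(G)$-module in~(\ref{eq:A_S(G)}). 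The compressed operator is then $p\tilde F p$ acting on $\cA_{\cS(G)}$, giving the first expression in~(\ref{eq:HigherIndexKKban}).

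Finally, I would verify the alternative description. The inclusion $\iota_{\cS(G)}: \cA_{\cS(G)}\hookrightarrow \cS(G, L^2(\f{X},\cE))$ and its adjoint $q$ (extending~(\ref{eq:AdjointOfInjection}) from the $C^*$-setting to the Banach setting, using that $\cS(G)$ is unconditional so that formula~(\ref{eq:AdjointOfInjection}) defines a bounded $\cS(G)$-linear map) satisfy $\iota_{\cS(G)}\circ q = p\,\cdot$ on $\cS(G, L^2(\f{X},\cE))$ and $q\circ \iota_{\cS(G)} = \mathrm{id}$ on $\cA_{\cS(G)}$. Under the isomorphism $\cA_{\cS(G)}\cong q(\cS(G, L^2(\f{X},\cE)))$ induced by $q$, the compression $p\tilde F p$ transforms into $q\circ \tilde F\circ \iota_{\cS(G)}$, yielding the second expression in~(\ref{eq:HigherIndexKKban}). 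The main subtlety — and the point at which this proof is not a formal copy of Proposition~\ref{prop:HigherIndexKK} — is ensuring that the generalized Kasparov cycle conditions (compact commutators and compact defect of $\tilde F^2-1$ on $\cA_{\cS(G)}$) continue to hold in the Banach sense; but these follow from the compactness conditions on $(L^2(\f{X},\cE),\phi,F)$ already verified in Lemma~\ref{le:KHomologyElement} together with the formal properties of $j^{G}_{\cS(G)}$ established in~\cite{LafforgueThesis}, so the Banach $KK$-cycle in~(\ref{eq:HigherIndexKKban}) is well-defined and represents $\mu_{\cS(G)}[\Dirac^{\cE}]$.
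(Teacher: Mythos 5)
The paper states Proposition~\ref{prop:HigherIndexKKBan} without proof, leaving it as the evident Banach analogue of Proposition~\ref{prop:HigherIndexKK}. Your proof correctly supplies the omitted argument, and it is exactly the expected one: unwind Definition~\ref{def:BanachVHigherIndex}, apply Lafforgue's descent map, simplify the interior tensor product via $p\cdot\cS(G,L^2(\f{X},\cE))=\cA_{\cS(G)}$, and rewrite the compression $p\tilde F p$ as $q\circ\tilde F\circ\iota$ using $\iota\circ q=p\cdot$ and $q\circ\iota=\mathrm{id}$, mirroring the paper's proof of Proposition~\ref{prop:HigherIndexKK} line by line with the Banach-module substitutions.
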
 

\subsection{Orbifold index and  the  higher  index}
We relate the higher index of $\Dirac^{\cE}$ to the Kawasaki index of $\Dirac^{G\backslash \cE}$ on $G\backslash \f{X}.$ This is essentially a result of Theorem~\ref{thm:HeatKernelAsympMain} and Theorem~\ref{eq:LocalIndexFormula}. Here we present an alternative proof which uses only $KK$-theory.

\begin{theorem}\label{Thm:TrivialRepK-theoreticIndex=OrbifoldIndex}
The Kawasaki's index for closed orbifold $G\backslash \f{X}$ is equal to the trivial representation of $G$ induced on the higher index of $[\Dirac^{\cE}]$: 
\begin{equation}\label{eq:HigherIndexToOrbifoldIndex}
\ind \Dirac^{G\backslash \cE}=\rho_*(\mu[\Dirac^{\cE}])
\end{equation}
where $\rho_*: K_0(C^*(G))\rightarrow\Z$ is induced by 
\begin{equation}\label{eq:Homomorphism and Trace}
\rho: C^{\ast}(G)\rightarrow\C: \sum \alpha_g g\mapsto \sum\alpha_g
\end{equation} 
and $\mu$ is the higher index map (\ref{eq:K-theoreticIndex}).
\end{theorem}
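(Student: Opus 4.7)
My plan is to use the explicit Kasparov $KK$-cycle description of the higher index from Proposition~\ref{prop:HigherIndexKK} and push it forward under the trivial representation at the cycle level. Recall that
\[
\mu[\Dirac^{\cE}] = [(\cA,\, 1_\C,\, p\tilde F p)] \in KK(\C, C^*(G)),
\]
where $\cA = p[L^2(\f{X},\cE)\rtimes G]$ and $\tilde F$ is the lift of $F = \Dirac^\cE(1+(\Dirac^\cE)^2)^{-1/2}$. The functoriality of $KK$ for the $*$-homomorphism $\rho\colon C^*(G)\to\C$ gives
\[
\rho_*\mu[\Dirac^{\cE}] = [(\cA \otimes_\rho \C,\, 1_\C,\, (p\tilde F p)\otimes 1)] \in KK(\C,\C) \cong \Z.
\]
So it suffices to identify $\cA\otimes_\rho\C$ with $L^2(G\backslash\f{X}, G\backslash\cE)$ and to check that the induced operator coincides with the bounded transform $F_{G\backslash}:=\Dirac^{G\backslash\cE}(1+(\Dirac^{G\backslash\cE})^2)^{-1/2}$, whose Fredholm index is $\ind\Dirac^{G\backslash\cE}$.

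For the Hilbert space identification, let $\pi\colon\f{X}\to G\backslash\f{X}$ be the quotient map and consider $\Psi\colon L^2(G\backslash\f{X},G\backslash\cE)\to \cA\otimes_\rho\C$ defined by $\bar F \mapsto (c\cdot\pi^*\bar F)\otimes 1$. Using the $G$-invariance of $\pi^*\bar F$ and the cut-off property $\sum_{g\in G}c(g^{-1}x)=1$, the inner product
\[
\rho(\langle c\pi^*\bar F, c\pi^*\bar F\rangle_{\C G}) = \sum_{g\in G}\int_{\f{X}}^{orb} c(x)c(g^{-1}x)|\pi^*\bar F(x)|^2_{\cE_x} d\vol_{\f{X}}(x) = \int_{\f{X}}^{orb} c\,|\pi^*\bar F|^2 d\vol_{\f{X}}
\]
equals $\|\bar F\|^2_{L^2(G\backslash\f{X})}$ by Lemma~\ref{lem:Propertycut-offFunction}, so $\Psi$ is isometric. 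For surjectivity, given $f\in\Gamma_c(\f{X},\cE)$, introduce the locally finite averaging $\Phi(f)(x):=\sum_{g\in G}g\cdot f(g^{-1}x)$, which is $G$-invariant and descends to $\bar F\in L^2(G\backslash\f{X},G\backslash\cE)$. A direct check gives $\Phi(c\pi^*\bar F)=\bar F=\Phi(f)$, hence $h:=f-c\pi^*\bar F$ satisfies $\rho(\langle h,h\rangle_{\C G})=\int_{\f{X}}^{orb}(h,\Phi(h))d\vol_{\f{X}}=0$, so $f\otimes 1=(c\pi^*\bar F)\otimes 1$ in $\cA\otimes_\rho\C$.

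For the operator identification, the $G$-equivariance of $\Dirac^\cE$ implies that $F$ descends to the bounded transform of $\Dirac^{G\backslash\cE}$ on the quotient orbifold. Translating through the alternative cycle description of Proposition~\ref{prop:HigherIndexKK}, namely $\cA\cong q(L^2(\f{X},\cE)\rtimes G)$ with the operator $q\circ\tilde F\circ\iota$, the isomorphism $\Psi$ intertwines $(p\tilde F p)\otimes_\rho 1$ with $F_{G\backslash}$; indeed, for $\bar F$ as above, the operator $\tilde F$ acts pointwise on $L^2(\f{X},\cE)$, and the $G$-equivariant formula in Theorem~\ref{thm:HeatKernelAsympMain} for the heat kernel identifies the functional calculus of $\Dirac^\cE$ on $G$-invariant sections with that of $\Dirac^{G\backslash\cE}$ on the quotient.

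The main obstacle will be making the operator identification in the last step rigorous: one must carefully track how the bounded transform interacts with the tensor product $\otimes_\rho\C$ and the compression by $p$. The cleanest route is to observe that both KK-cycles have the same resolvent structure on the identified Hilbert space, so that as elements of $KK(\C,\C)$ they coincide, yielding $\rho_*\mu[\Dirac^\cE]=\ind \Dirac^{G\backslash\cE}$.
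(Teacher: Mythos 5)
Your Hilbert-space identification $\Psi\colon L^2(G\backslash\f{X},G\backslash\cE)\to\cA\otimes_\rho\C$ is essentially the same map $j(f)=c\tilde f\otimes 1$ used in the paper, and the isometry computation via $\sum_{g}c(g^{-1}x)=1$ is correct; the surjectivity argument using the averaging $\Phi(f)=\sum_g g\cdot f(g^{-1}\,\cdot\,)$ is a slightly different packaging of the paper's explicit inverse $j^{-1}\colon h\otimes 1\mapsto\sum_g h(g^{-1}\,\cdot\,)$, but is sound. Up to this point you have recovered the paper's proof.

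However, the operator identification — the claim that the pushed-forward Fredholm operator really is the bounded transform $F_{G\backslash\f{X}}$ — is exactly where your proposal has a genuine gap, and you acknowledge as much. The appeal to ``$F$ descends because $\Dirac^{\cE}$ is $G$-equivariant'' is misleading at the level of KK-cycles, because the cycle for $\mu[\Dirac^{\cE}]$ does not carry $F$ itself but the compression $q\circ\tilde F\circ\iota=\sum_{g\in G}g(c^{1/2}Fc^{1/2})$, and this operator is \emph{not} literally $F$: it has been cut off and re-averaged. What the paper actually does is transport this compressed operator through $j$ to an operator $F_0$ on $L^2(G\backslash\f{X},G\backslash\cE)$ and then show $F_0\equiv F_{G\backslash\f{X}}$ \emph{modulo compact operators}. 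That step is not automatic: it requires the observation that $\sum_g g(c^{1/2}Fc^{1/2})\equiv\sum_g g(cF)=F$ modulo compacts, which in turn hinges on $[F,c^{1/2}]$ being a compact operator (a commutator of an order-$0$ pseudo-differential operator with a compactly supported function). Your concluding sentence --- ``both KK-cycles have the same resolvent structure'' --- is a heuristic, not an argument; the KK-class in $KK(\C,\C)\cong\Z$ is the Fredholm index, and two bounded operators on the same Hilbert space have the same Fredholm index precisely when they differ by a compact perturbation, so that compactness statement must be supplied. Without it the proof does not close. To repair the proposal you should compute $F_0=j^{-1}\circ[(q\tilde F\iota)\otimes 1]\circ j$ explicitly and verify $F_0-F_{G\backslash\f{X}}\in\cK(L^2(G\backslash\f{X},G\backslash\cE))$ along the lines above.
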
 

\begin{proof}
Let $F=\frac{\Dirac^{\cE}}{\sqrt{1+(\Dirac^{\cE})^2}}$ and $F_{G\backslash \f{X}}=\frac{\Dirac^{G\backslash \cE}}{\sqrt{1+(\Dirac^{G\backslash \cE})^2}}.$
Then the action of $F_{G\backslash \f{X}}$ on $\Gamma(G\backslash \f{X}, G\backslash \cE)$ can be identified to that of $F$ on the $G$-equivariant sections $\Gamma(\f{X}, \cE)^G.$
The functorial map $\rho_*$ applied to the higher index $\mu[\Dirac^{\cE}]$ is essentially the $KK$-product of $\mu[F]$ by $[(\C, \rho, 0)]$ over $C^*(G)$ in the following map (See~\cite{Ba:KTOA})
\[
KK(\C, C^*(G))\times KK(C^*(G), \C)\rightarrow KK(\C,\C)\cong\Z.
\]
Then the right hand side of (\ref{eq:HigherIndexToOrbifoldIndex}) is
\begin{equation}
\label{eq:OrbifoldKIndexRHS}
\begin{split}
\rho_*(\mu[F])=&[(\cA, 1_{\C}, q\circ\tilde F\circ \iota)]\otimes_{C^*(G)}[(\C, \rho, 0)]\\
=&[(\cA\otimes_{C^*(G)}\C, 1_{\C}, (q\circ\tilde F\circ \iota )\otimes 1)]
\end{split}
\end{equation}
where $\cA=q(L^2(\f{X},\cE)\rtimes G)$ and by (\ref{eq:injection}), (\ref{eq:AdjointOfInjection}) and (\ref{eq:LiftedOperatorF}) we have
\begin{equation}
\label{eq:OrbioldKIndexCompression}
q\circ\tilde F\circ \iota =\sum_{g\in G}g(c^{\frac12}Fc^{\frac12}).
\end{equation}

In the following we shall identify $L^2(G\backslash \f{X}, G\backslash\cE)$ and $\cA\otimes_{C^*(G)}\C$.
For a fixed cut-off function $c$, we have the map
\begin{equation*}
j: \Gamma(G\backslash \f{X}, G\backslash\cE)\longrightarrow \Gamma_c(\f{X}, \cE)\otimes_{\C G}\C \qquad f\mapsto c\cdot\tilde f\otimes1.
\end{equation*}
Here, $\Gamma_c(\f{X}, \cE)$ is a pre-Hilbert $\C G$-module and $\tilde f$ is the image of $f$ under the natural map 
$\Gamma(G\backslash \f{X}, G\backslash\cE)\cong \Gamma(\f{X}, \cE)^G.$ 
We show that $j(f)$ does not depend on the choice of $c.$ 
In fact, for another cut-off function $d$ and any $h\otimes1\in \Gamma_c(\f{X}, \cE)\otimes_{\C G}\C$, as $\tilde f$ is $G$-invariant, we have 
\begin{equation*}
\langle (d-c)\tilde f\otimes 1, h\otimes 1 \rangle=\sum_{g\in G}\langle [(d-c)\tilde f](gx), h(x)\rangle=0.
\end{equation*}
Hence the nondegeneracy of the inner product implies that $d\tilde f\otimes1=c\tilde f\otimes1.$ 

We claim that $j$ preserves the inner products. 
In fact, the claim follows from
\begin{align*}
\langle f, h \rangle=&\int_{G\backslash \f{X}}^{orb} (f(x), h(x))_{(G\backslash\cE)_x}d\vol_{G\backslash\f{X}}(x)\\
=&\disp{\int}_{\f{X}}^{orb}c(x) (f(x), h(x))_{\tilde{\cE}_x}d\vol_{\f{X}}(x)\quad \text{and}
\end{align*}
\begin{align*}
\langle j(f), j(h) \rangle=&\sum_{g\in G}\disp{\int}_{\f{X}}^{orb}(c(x)\tilde f(x), c(g^{-1}x)\tilde h(gx))_{\tilde{\cE}_x}d\vol_{\f{X}}(x)\\
=&\disp{\int}_{\f{X}}^{orb}(c(x)\tilde f(x), \sum_{g\in G}c(g^{-1}x)\tilde h(x))_{\tilde{\cE}_x}d\vol_{\f{X}}(x)
\end{align*}
for all $ f, h\in C(G\backslash \f{X}, G\backslash\cE).$
Hence, the map $j$ extends to an isomorphism of two Hilbert spaces
\begin{equation*}
j: L^2(G\backslash \f{X}, G\backslash\cE)\rightarrow\cA\otimes_{\C G}\C.
\end{equation*}
It is straight forward to check that the inverse of $j$ is given by 
\begin{equation*}
j^{-1}: \cA\otimes_{C^{*}(G)}\C\rightarrow L^2(G\backslash \f{X}, G\backslash\cE) \quad h\otimes1\mapsto \sum_{g\in G} h(g^{-1}\cdot).
\end{equation*}
Then together with (\ref{eq:OrbifoldKIndexRHS}) and (\ref{eq:OrbioldKIndexCompression}) we have
\begin{equation*}
\rho_*(\mu[F])=[(\cA\otimes_{C^*(G)}\C, 1_{\C}, \sum_{g\in G}g(c^{\frac12}Fc^{\frac12})\otimes 1)]
=[(L^2(G\backslash \f{X}, G\backslash\cE), F_0)],
\end{equation*}
where 
\begin{equation*}
F_0=j^{-1}\circ[\sum_{g\in G}g(c^{\frac12}Fc^{\frac12})\otimes 1]\circ j=\sum_{l,g\in G}c(g^{-1}l^{-1}x)^{\frac12}Fc(g^{-1}l^{-1}x)^{\frac12}c(l^{-1}x).
\end{equation*}

Finally, observe that the left hand side of (\ref{eq:HigherIndexToOrbifoldIndex}) is 
\[
\ind \Dirac^{G\backslash \cE}=[(L^2(G\backslash \f{X}, G\backslash\cE), 1_{\C}, F_{G\backslash \f{X}})].
\]
One need only to show that $F_0$ and $F$ coincide up to compact operators on $\Gamma(\f{X}, \cE)^G$ (denoted $F_0\equiv F$), i.e., they have the same Fredholm index. 
As we have
 $\sum_{g\in G}g(c^{\frac12}Fc^{\frac12})\equiv\sum_{g\in G}g(cF)=F.$
Then 
\begin{equation*}
F_0=\sum_{l\in G}l[\sum_{g\in G}g(c^{\frac12}Fc^{\frac12})c]\equiv\sum_{l\in G}l(Fc)=F=F_{G\backslash \f{X}}.
\end{equation*}
The theorem is proved.
 \end{proof}

\begin{remark}
It is important to emphasis $G$ being discrete to ensure 
\[
\Gamma(\f{X},\cE)^G\cong \Gamma(G\backslash \f{X}, G\backslash\cE)
\]
 and so that $\Dirac^{G\backslash \cE}$ is a restriction of $\Dirac^{\cE}$ to the invariant sections. We used this identification in the proof of Theorem \ref{Thm:TrivialRepK-theoreticIndex=OrbifoldIndex}.  
If $G$ is a locally compact group acting on $\f{X}$ properly, co-compactly and isometrically, 
then an elliptic operator $\Dirac^{G\backslash \cE}$ on $G\backslash \f{X}$ lifts to a transversally elliptic operator on $\f{X}$, which is elliptic when $G$ is discrete.
When a group $G$ is continuous, the restriction $\Dirac^{G\backslash \cE}$ loses informations on the longitudinal part of a $G$-invariant operator $\Dirac^{\cE}$ on $\f{X}.$ However, a result similar to Theorem~\ref{Thm:TrivialRepK-theoreticIndex=OrbifoldIndex}, when a locally compact group acts on a manifold properly and co-compactly, can be found in~\cite{Mathai-Zhang}.
\end{remark}

\section{Localized Indices}
\label{sec:Localized indices}

In this section, we introduce the \emph{localized trace} associated to each conjugacy class $(g)$ of $g\in G$ and define the corresponding localized index. We show  that localized indices is a well-defined topological invariant for the $G$-invariant Dirac operator $\Dirac^{\cE}$. 
In particular, the localized index  at  the group identity is just  the $L^2$-index, obtained from taking the canonical von Neumann trace on  the group von Neumann algebra $\cN G$ of the higher index. 
We also show  that the localized index can be computed from the heat kernel of the Dirac operator.

\subsection{Localized traces}
\label{sec:LocalizedTrace}

Denote by $(g)$ the conjugacy class of $g$ in $G$.  
Define a map  $\tau^{(g)}: \C G\rightarrow\C$  given by 
\begin{equation}\label{Def:(g)-trace}
  \sum_{h\in G} \alpha_h h=  \sum_{h\in(g)} \alpha_h.
\end{equation}

\begin{lemma}\label{TraceLemma}
The linear map $\tau^{(g)}$ in (\ref{Def:(g)-trace}) is a trace for $\C G$, that is, $\tau^{(g)} (ab) = \tau^{(g)} (ba)$ for any $a, b\in \C G$.
\end{lemma}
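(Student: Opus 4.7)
The plan is to reduce to the basis elements of $\C G$ by bilinearity and then exploit the elementary fact that for any $h,k\in G$ the products $hk$ and $kh$ are conjugate to each other in $G$.

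First, I would note that $\tau^{(g)}$ is $\C$-linear by construction, and that both sides of the identity $\tau^{(g)}(ab)=\tau^{(g)}(ba)$ are bilinear in $(a,b)\in \C G\times\C G$. Hence it suffices to verify the identity on pairs of group elements $(h,k)\in G\times G$. In other words, I reduce the claim to showing
\begin{equation*}
\tau^{(g)}(hk)=\tau^{(g)}(kh)\qquad \forall\, h,k\in G.
\end{equation*}

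The key algebraic observation is that $kh=h^{-1}(hk)h$, so $hk$ and $kh$ lie in the same conjugacy class of $G$. By the definition~(\ref{Def:(g)-trace}), $\tau^{(g)}$ on a single group element evaluates to $1$ if the element belongs to the conjugacy class $(g)$ and $0$ otherwise; this value depends only on the conjugacy class. Therefore $\tau^{(g)}(hk)=\tau^{(g)}(kh)$, completing the reduced identity.

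Finally, writing $a=\sum_h\alpha_h h$ and $b=\sum_k\beta_k k$ as finite sums and expanding,
\begin{equation*}
\tau^{(g)}(ab)=\sum_{h,k}\alpha_h\beta_k\,\tau^{(g)}(hk)=\sum_{h,k}\alpha_h\beta_k\,\tau^{(g)}(kh)=\tau^{(g)}(ba),
\end{equation*}
which establishes the tracial property. There is no real obstacle here; the only thing to be slightly careful about is that $\tau^{(g)}$ is well-defined as a linear functional, which is immediate since the elements of $G$ form a basis of $\C G$ and the sum $\sum_{h\in(g)}\alpha_h$ is finite for $a\in\C G$ (only finitely many $\alpha_h$ are nonzero).
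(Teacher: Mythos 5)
Your proof is correct and takes a genuinely cleaner route than the paper's. The paper's argument is a direct coefficient computation: it writes out $c_k=\sum_h a_{kh^{-1}}b_h$ and $d_k=\sum_h b_{kh^{-1}}a_h$, introduces a set $K\subset G$ of representatives parametrizing $(g)$ via $k\mapsto kgk^{-1}$ (the set satisfying property~(\ref{ProofTrace})), and then manipulates a double sum over $K\times G$, using the observation that $hK$ again satisfies~(\ref{ProofTrace}) to re-index. Your approach instead reduces by bilinearity to basis elements $h,k\in G$ and invokes the elementary fact that $hk$ and $kh$ are conjugate, so $\tau^{(g)}$ --- being a class function on $G$ by definition --- assigns them the same value. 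This buys a shorter, more conceptual proof that avoids the index-shuffling entirely and makes it transparent that the only structural input is conjugacy invariance; the paper's more computational version has the minor advantage of exercising the set $K$ and Lemma~\ref{lem:K^GZ_G=G}, which the authors reuse later (e.g.\ in Proposition~\ref{prop:FormulaFor(g)Trace} and Theorem~\ref{thm:SelbergTraceFormula}).
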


\begin{proof}
Let $a=\sum_{g\in G} a_g g$ and $b=\sum_{g\in G} b_g g$, where all but finite coefficients are $0$, i.e. $a, b\in \C G$. 
Let $c_g, d_g$ be the coefficients of the products:
\[
ab=\sum_{g\in G} c_g g\quad \text{ and }\quad ba=\sum_{g\in G} d_g g.
\]
Thus 
$c_k=\sum_{h\in G} a_{kh^{-1}}b_h$ and $d_k=\sum_{h\in G} b_{kh^{-1}}a_h.$
Let $K$ be defined in~(\ref{ProofTrace}). Then
 \[
\tau^{(g)}\left(\sum_{h\in G} \alpha_h h\right)=  \sum_{k\in K} \alpha_{kgk^{-1}}.
\] 
By Definition~\ref{Def:(g)-trace} and Lemma~\ref{lem:K^GZ_G=G}, we have
\begin{align*}
\tau^{(g)}(ba)&=\sum_{k\in K, h\in G} a_h b_{kgk^{-1}h^{-1}}\quad \text{and}\\
\tau^{(g)}(ab)&=\sum_{k\in K, h\in G}a_{kgk^{-1}h^{-1}}b_h=\sum_{k\in K}\left(\sum_{h\in G}a_{kgk^{-1}h}b_{h^{-1}}\right)\\
&=\sum_{k\in K}\left(\sum_{h\in G}a_hb_{h^{-1}kgk^{-1}}\right)=\sum_{h\in G}\left(\sum_{k\in K}a_hb_{h^{-1}kgk^{-1}}\right)\\
&=\sum_{h\in G}\left(\sum_{k\in hK}a_hb_{kgk^{-1}h^{-1}}\right).
\end{align*}
It is easy to verify that $hK$ also satisfies~(\ref{ProofTrace}) for each $h\in G$. 
Then $\tau^{(g)}(ab)=\tau^{(g)}(ba).$ 
The lemma is proved.
\end{proof}

\begin{definition}[\bf Localized $(g)$-trace]
\label{def:Localized(g)Trace}
Let $\cS(G)$ be a Banach algebra being a unconditional completion of $\C G$ satisfying
\begin{equation}
\label{eq:InclusionAlgebras}
L^1(G)\subset \cS(G)\subset C^*_{r}(G).
\end{equation}
A {\bf localized $(g)$-trace} on $\cS(G)$ is a continuous trace map 
\begin{equation}
\label{eq:Localized(g)trace}
\tau^{(g)}: \cS(G)\longrightarrow\C,
\end{equation}
which extends
  the  map (\ref{Def:(g)-trace}).
\end{definition}

\begin{remark}
The localized $(g)$-trace map always exists. We can choose $\cS(G)$ to be $L^1(G).$  Note that  $L^1(G)$ is an unconditional completion of $\C G$. The continuity of $\tau^{(g)}: L^1(G)\rightarrow \C$  can be  proved as follows. For any $\epsilon>0$, choose $\delta=\varepsilon,$ for all $\|a\|_{L^1}<\delta$ where $a=\sum_{g\in G} a_g g$, then we have
\begin{equation*}
|\tau^{(g)}(a)|:=\left|\sum_{h\in(g)}a_h\right|\le \sum_{h\in G} |a_h|=\|a\|_{L^1}<\varepsilon.
\end{equation*}
\end{remark}

\begin{remark}
\label{rem:Finiteness(g)Trace}
Let $e$ be the group identity of $G$.
The localized $(e)$-trace is ``global" in the sense that it is given by the canonical continuous trace on $C^*_{r}(G)$:
\begin{equation}
\label{eq:tau^(e)}
\tau^{(e)}: C^*_{r}(G)\longrightarrow \C \qquad  \sum_{h\in G} \alpha_h h\mapsto\alpha_e.
\end{equation}
In fact, $\tau^{(e)}$ can be further extended  to a continuous normalized trace on the group von Neumann algebra $\cN G,$ the weak closure of $C_r^*(G).$
Note that we have the $*$-homomorphisms
 \[
C^*(G)\twoheadrightarrow C^*_{r}(G)\hookrightarrow\cN G,
\]
which induce the following homomorphisms on the level of $K$-theory 
\begin{equation}
\label{eq:ThreeInclusions}
K_*(C^*(G))\longrightarrow K_*(C^*_{r}(G))\longrightarrow K_*(\cN G).
\end{equation}
Recall that a trace $\tau$ on a $C^*$-algebra $A$ is {\bf normalized} if it is a state, i.e., 
\begin{equation*}
\tau(a^*a)\ge 0, \quad \forall a\in A\qquad \text{ and }\quad \tau(e)=1.
\end{equation*}
The trace $\tau^{(e)}$ is normalized.
For a general conjugacy class $(g)$  consisting of  infinite elements, then $\tau^{(g)}$ is \emph{not} normalized, hence it may not be a continuous trace on $\cN G$.
\end{remark}

\begin{remark}
\label{rem:localizedS}
If $G$ is abelian or if $(g)$ is a finite set, then $\tau^{(g)}$, for $g\neq e$,  can be  extended  to a  continuous trace $C^*_{r}(G)\rightarrow\C$. In general, however, a trace map $C^*_{r}(G)\rightarrow\C$ can fail to be continuous. Thus, $\cS(G)$ is not necessarily equal to $C^*_{r}(G).$
\end{remark} 

The localized $(g)$-trace $\tau^{(g)}$ in Definition~\ref{def:Localized(g)Trace} and the $(g)$-trace $\tr^{(g)}$ introduced in Section~\ref{sec:EllipticPDO} are closely related.
Let $S: \Gamma_c(\f{X}, \cE)\rightarrow \Gamma_c(\f{X}, \cE)$ be a $G$-invariant operator that extends to a bounded operator on $L^2(\f{X}, \cE)$. 
Denote by $\tilde S: C_c(G, L^2(\f{X}, \cE))\rightarrow C_c(G, L^2(\f{X}, \cE))$ the lift of $S$ given by 
\begin{equation*}
(\tilde S u)(g)=S(u(g)),\qquad  \forall u\in C_c(G, L^2(\f{X}, \cE)), \forall g\in G.
\end{equation*}
Let $\cS(G)$ be an unconditional completion of $\C G$ such that  the localized $(g)$-trace on $\cS(G)$ is continuous. 
Then the following properly supported operator
\begin{equation}
\label{eq:AveragingSmoothingOpS}
S_{\cA}:=q\tilde S \iota=\sum_{g\in G}g\cdot(c^{\frac12}Sc^{\frac12})
\end{equation}
can be extended to a bounded operator on the $\cS(G)$-module $\cA_{\cS(G)}$ ({\em cf.}~(\ref{eq:A_S(G)})).

\begin{lemma}
\label{lem:VeryImportantLemma}
Let $S: L^2(\f{X}, \cE)\rightarrow L^2(\f{X}, \cE)$ be a bounded selfadjoint $G$-invariant smoothing operator.
Let $S_{\cA}: \cA_{\cS(G)}\rightarrow\cA_{\cS(G)}$ be the operator given by~(\ref{eq:AveragingSmoothingOpS}). 
Then, 
\begin{enumerate}
\item $S_{\cA}$ on $L^2(\f{X}, \cE)$ is of $(g)$-trace class in the sense of Definition~\ref{def:(g)trace} for all $g\in G$. 
\item $\Tr_s S_{\cA}\in \cS(G)$ ({\em cf.} Definition~\ref{def:Localized(g)Trace}) and its localized $(g)$-trace coincide with the $(g)$-trace of $\cS_{\cA}$, i.e.,
\begin{equation}
\label{eq:EQUALITY}
\tr_s^{(g)}(S_{\cA})=\tau^{(g)}\left(\Tr_s S_{\cA}\right)\in\R.
\end{equation}
\end{enumerate}
\end{lemma}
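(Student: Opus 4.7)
The plan is to show that $S_\cA$ belongs to the class $\mathbf{S}$ of Definition~\ref{def:SchwartzClass}, and then to identify $\Tr_s S_\cA$ with an explicit finite sum in $\C G \subset \cS(G)$ whose coefficients are exactly the integrals appearing in the $(g)$-trace formula of Proposition~\ref{prop:FormulaFor(g)Trace}. The identity~(\ref{eq:EQUALITY}) will then follow by matching coefficients under the localized trace $\tau^{(g)}$.

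For part (1), I first check that $S_\cA \in \mathbf{S}$. The $G$-invariance of $S_\cA = \sum_{g \in G} g \cdot (c^{1/2} S c^{1/2})$ follows by a reindexing argument under conjugation by any further element of $G$; the kernel $K_{S_\cA}$ is smooth because $K_S$ and $c$ are smooth and the sum is locally finite by properness of the $G$-action together with the compact support of $c$; and the same local finiteness gives the proper support of $K_{S_\cA}$. With $S_\cA \in \mathbf{S}$, the last assertion of Proposition~\ref{prop:FormulaFor(g)Trace} yields part (1), together with the explicit formula
\begin{equation*}
\tr_s^{(g)}(S_\cA) = \sum_{h \in (g)} \int_{\f{X}}^{orb} c(x)\, \Tr_s\bigl[h^{-1} K_{S_\cA}(hx, x)\bigr]\, d\vol_{\f{X}}(x).
\end{equation*}

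For part (2), I interpret $\Tr_s S_\cA$ as the canonical $\cS(G)$-valued trace on the projective Banach $\cS(G)$-module $\cA_{\cS(G)} = p \cdot \cS(G, L^2(\f{X}, \cE))$ of~(\ref{eq:A_S(G)}). Using the explicit formulas~(\ref{eq:injection}) and~(\ref{eq:AdjointOfInjection}) for $\iota$ and $q$, together with the $\C G$-valued inner product~(\ref{eq:C_cGInnerProduct}), this module trace can be computed as
\begin{equation*}
\Tr_s S_\cA = \sum_{g \in G} \beta_g\, g, \qquad \beta_g := \int_{\f{X}}^{orb} c(x)\, \Tr_s\bigl[g^{-1} K_{S_\cA}(gx, x)\bigr]\, d\vol_{\f{X}}(x).
\end{equation*}
Because $S_\cA$ is properly supported and $G$ acts properly on $\f{X}$, the coefficient $\beta_g$ vanishes for all but finitely many $g \in G$, so $\Tr_s S_\cA$ actually lies in $\C G \subset \cS(G)$. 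Applying $\tau^{(g)}$ in the sense of~(\ref{Def:(g)-trace}) extracts $\sum_{h \in (g)} \beta_h$, which is precisely the right-hand side of the $(g)$-trace formula above, giving~(\ref{eq:EQUALITY}).

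The main technical obstacle will be rigorously identifying the abstract Hilbert $\cS(G)$-module trace on $\cA_{\cS(G)}$ with the concrete integral representation for $\beta_g$ displayed above. Module traces are \emph{a priori} defined only up to commutators, so pinning down a canonical group-algebra representative requires unpacking the projection $\iota \circ q = p \cdot$ of Remark~\ref{rem:AandP} and invoking Lemma~\ref{lem:Propertycut-offFunction} to interchange the sums and integrals arising from the averaging $q \tilde S \iota$. Once this identification is in place, the finiteness of the support of the coefficients $\beta_g$ removes all convergence concerns and the matching with $\tr_s^{(g)}(S_\cA)$ is a direct verification.
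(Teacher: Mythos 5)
The outline of part (1) is fine, and your remarks that $\Tr_s S_\cA$ has finite support (hence lies in $\C G\subset\cS(G)$) and that $\tau^{(g)}$ extracts the conjugacy-class sum of coefficients are both consistent with the paper's proof. The genuine gap is in your claimed closed form
\begin{equation*}
\beta_k \;=\; \int_{\f{X}}^{orb} c(x)\,\Tr_s\bigl[k^{-1}K_{S_\cA}(kx,x)\bigr]\,d\vol_{\f{X}}(x)
\end{equation*}
for the \emph{individual} coefficient of $k$ in $\Tr_s S_\cA$. This is not what the module trace produces, and the identity~(\ref{eq:EQUALITY}) does not hold coefficient-by-coefficient; it only holds after summing over the conjugacy class $(g)$.

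To see the obstruction concretely: choose $\{u_i\}\subset\Gamma_c(\f{X},\cE)$ so that $\{\iota(u_i)\}$ is an orthonormal basis of $\cA_{\cS(G)}$. The module trace gives $(\Tr_s S_\cA)(k)=\sum_i\langle S_\cA u_i, u_i\rangle_{\C G}(k)$. Writing the $G$-invariant operator as $S_\cA=\sum_{h\in G} h\,(c\,S_\cA)\,h^{-1}$ and using unitarity of $h$ one gets
\begin{equation*}
(\Tr_s S_\cA)(k)\;=\;\sum_{i}\sum_{h\in G}\bigl\langle (c\,S_\cA)(h^{-1}u_i),\;(h^{-1}kh)\,(h^{-1}u_i)\bigr\rangle_{L^2},
\end{equation*}
where now $\{h^{-1}u_i\}_{h\in G,\,i}$ is an orthonormal basis of $L^2(\f{X},\cE)$. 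The conjugate $h^{-1}kh$ varies with $h$, so this double sum is \emph{not} $\Tr_s(k^{-1}cS_\cA)$ unless $k$ is central. The paper therefore never claims the individual-coefficient formula; it sums over $k\in(g)$, reindexes the double sum $\sum_{h\in G}\sum_{k\in(g)}$ (all conjugates stay inside $(g)$, and each element of $(g)$ is hit equally often), and only then obtains $\sum_{k\in(g)}\Tr_s(k^{-1}cS_\cA)=\tr_s^{(g)}(S_\cA)$. In fact the coefficients $\beta_k$ themselves depend on the choice of the module basis $\{u_i\}$; only the conjugacy-class averages, i.e.\ $\tau^{(g)}(\Tr_s S_\cA)$, are canonical. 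What you flag as merely ``the main technical obstacle'' is thus the substance of the proof, and the step you sketch to resolve it would, if carried out literally, fail. You need the passage from a module orthonormal basis of $\cA_{\cS(G)}$ to the $L^2$-orthonormal basis $\{h\cdot u_i\}$ and the conjugacy-class reindexing, rather than a per-element identification of $\beta_k$ with an integral of the kernel.
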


\begin{proof} (1)  As the cut-off function $c$ is compactly supported, the operator $c^{\frac12}Sc^{\frac12}$ is compactly supported. Then $S_{\cA}$ given by $\sum_{g\in G}g\cdot(c^{\frac12}Sc^{\frac12})$ as in~(\ref{eq:AveragingSmoothingOpS}) is properly supported and is of $(g)$-trace class ({\em cf.} Lemma~\ref{lem:SmoothingToProperlySupportedSmoothing} and Proposition~\ref{prop:FormulaFor(g)Trace}).

(2)   Recall from Section~\ref{sec:K-IndexD}, $\Gamma_c(\f{X}, \cE)$ has both the pre-Hilbert space structure~(\ref{eq:GammaXEHilbertSpIP}) and the $\C G$-module structure~(\ref{eq:C_cGInnerProduct}). 
There is an injection $\iota$ from $\Gamma_c(\f{X}, \cE)$ to $C_c(G, L^2(\f{X}, \cE))$ given by~(\ref{eq:injection}), and it preserve the $\C G$-inner product. Now, $\cA_{\cS(G)}$ is the closure of $\iota(\Gamma_c(\f{X}, \cE))$ under the $\C G$-valued inner product~(\ref{eq:C_cGInnerProduct}).

Let $\{u_i\}_{i\in\N}\in\Gamma_c(\f{X}, \cE)$ so that $\{\iota(u_i)\}_{\iota\in\N}$ forms an orthonormal basis for the $\cS(G)$-module $\cA_{\cS(G)}$.
Without loss of generality, let us ignore the $\Z/2\Z$-grading on $\cE$ and work on trace instead of supertrace.
Then 
\begin{equation*}
(\Tr S_{\cA})(k))=\sum_{i}\langle S_{\cA}u_i, u_i\rangle_{\C G}(k)\qquad \forall k\in G.
\end{equation*}
As $S_{\cA}$ is properly supported, $u_i$ is compactly supported and the action of $G$ on $\f{X}$ is proper, $\Tr S_{\cA}(k)$ vanishes for all but finite $k\in G.$
Therefore, $\Tr S_{\cA}\in\C G\subset \cS(G).$ 

To see~(\ref{eq:EQUALITY}), we calculate the localized $(g)$-trace of $\Tr S_{\cA}$ as follows.
\begin{equation*}
\tau^{(g)}((\Tr S_{\cA})(\cdot))=\sum_{k\in(g)}\sum_{i}\langle S_{\cA}u_i, u_i\rangle_{\C G}(k).
\end{equation*}
By definition of the $C^*(G)$-inner product~(\ref{eq:C_cGInnerProduct}), we have 
\begin{equation*}
\begin{split}
\langle u_i, u_j\rangle_{L^2}=&\langle u_i, u_j\rangle_{\C G}(e)=\delta_{ij} \quad \forall i, j\in\N;\\
\langle h\cdot u_i, g\cdot u_j\rangle_{L^2}=&\langle u_i, u_j\rangle_{\C G}(h^{-1}g)=0 \quad  \forall g\neq h, \forall i, j\in\N.
\end{split}
\end{equation*}
Then 
\begin{equation}
\label{eq:BasisForL^2}
\{g\cdot u_i\}_{g\in G, i\in\N}\in \Gamma_c(\f{X}, \cE)
\end{equation} 
forms an orthonormal subset of $L^2(\f{X}, \cE).$

We claim that (\ref{eq:BasisForL^2})  forms  a basis for $L^2(\f{X}, \cE)$. If not, let $v\in L^2(\f{X}, \cE)$ is a vector perpendicular to all elements in (\ref{eq:BasisForL^2}). Then 
\begin{equation*}
\langle v, u_i\rangle_{\C G}(g)=\langle v, g\cdot u_i\rangle_{L^2}=0 \quad \forall g\in G, \forall i\in\N.
\end{equation*}
This implies that $\iota(v)$ is perpendicular to the basis $\{\iota(u_j)\}_{j\in\N}$ in $\cA_{\cS(G)}$. So $\iota(v)=0$. As $\iota$ is injective we conclude that $v=0.$
Thus, elements of the set~(\ref{eq:BasisForL^2}) form an orthonormal basis for $L^2(\f{X}, \cE).$

To compute the trace of $S_{\cA}$, we consider 
\begin{equation}
\label{eq:TraceS_A}
\langle S_{\cA}u_i, u_i\rangle_{\C G}(k)=\int_{\f{X}}^{orb}\langle S_{\cA}u_i(x), k\cdot u_i(x)\rangle_{\cE_x}d\vol_{\f{X}}(x)
\end{equation}
obtained by~(\ref{eq:C_cGInnerProduct}). 
Note that $S_{\cA}$ is $G$-invariant by~(\ref{eq:AveragingSmoothingOpS}).
In particular, we may replace $S_{\cA}$ on the right hand side of~(\ref{eq:TraceS_A}) by
\begin{equation*}
S_{\cA}=\sum_{h\in G}h\cdot(cS_{\cA})=\sum_{h\in G}h(cS_{\cA})h^{-1}.
\end{equation*} 
Therefore for all $k\in G,$ we have
\begin{align*}
& \langle S_{\cA}u_i, u_i\rangle_{\C G}(k)\\
=&\sum_{h\in G}\int_{\f{X}}^{orb}\langle h(cS_{\cA})h^{-1}u_i(x), ku_i(x)\rangle_{\cE_x}d\vol_{\f{X}}(x)\\
=&\sum_{h\in G}\int_{\f{X}}^{orb}\langle (cS_{\cA})h^{-1}u_i(x), (h^{-1}kh)h^{-1}u_i(x)\rangle_{\cE_x}d\vol_{\f{X}}(x).
\end{align*}
As we sum all $k\in(g)$ we obtain
\begin{align*}
&\sum_{k\in(g)}\langle S_{\cA}u_i, u_i\rangle_{\C G}(k)\\
=&\sum_{h\in G}\sum_{k\in(g)}\int_{\f{X}}^{orb}\langle (cS_{\cA})h^{-1}u_i(x), kh^{-1}u_i(x)\rangle_{\cE_x}d\vol_{\f{X}}(x)\\
=&\sum_{k\in(g)}\sum_{h\in G}\int_{\f{X}}^{orb}\langle k^{-1}(cS_{\cA})h^{-1}u_i(x), h^{-1}u_i(x)\rangle_{\cE_x}d\vol_{\f{X}}(x).
\end{align*}
Observe that $\{\iota (u_i)\}_{i\in\N}$ forms an orthonormal basis for $\cA_{\cS(G)}$ and the set
$\{g\cdot u_i\}_{g\in G, i\in\N}$ forms an orthonormal basis for $L^2(\f{X}, \cE).$ 
Then by summing the equality for all $i\in\N,$ we conclude that 
\begin{equation*}
\sum_{k\in(g)}(\Tr S_{\cA})(k)=\sum_{k\in(g)}\Tr(k^{-1}cS_{\cA}).
\end{equation*}
By Definition~\ref{def:(g)trace} and Definition~\ref{def:Localized(g)Trace}
 this is equivalent to say that 
\begin{equation*}
\tau^{(g)}(\Tr S_{\cA}(\cdot))=\tr^{(g)}(S_{\cA}).
\end{equation*}
The lemma is proved. 
\end{proof}

\subsection{Localized indices}
The localized trace in the sense of Definition \ref{def:Localized(g)Trace} induces a group homomorphism on the level of $K$-theory: 
\begin{equation}
\label{eq:Tau(g)onKTheory}
\tau^{(g)}_{\ast}: K_0(\cS(G))\longrightarrow\R, 
\end{equation}
as follows.  
Let $P$ be a projection in $M_n( \cS(G))$, the algebra of $n\times n$ matrices with entries in $\cS(G)$. Define
\[\tau^{(g)}_{\ast}(P)=\Tr (\tau^{(g)} (P)).
\]
Any element of $K_0(\cS(G))$ is represented by $[P_1]-[P_2]$ where $P_1$ and $P_2$ are
projections in the matrix  algebra  with entries in $\cS(G)$. 
Then 
the map~(\ref{eq:Tau(g)onKTheory}) is well-defined  because $\tau^{(g)}$ is a continuous trace map.
The map~(\ref{eq:Tau(g)onKTheory}) is real-valued 
 as any projection  can be written as the difference of two positive operators on which  $\tau^{(g)}_*$ takes real values.

Let $\mu_{\cS(G)}[\Dirac^{\cE}]\in K_0(\cS(G))$ be the Banach algebra version higher index of $\Dirac^{\cE}$ 
given by~(\ref{eq:K-theoreticIndexBan}).
Recall that in Theorem~\ref{Thm:TrivialRepK-theoreticIndex=OrbifoldIndex} 
we applied the homomorphism $\rho$ in~(\ref{eq:Homomorphism and Trace}) to get the orbifold index on $G\backslash \f{X}.$
Also, combining~(\ref{eq:tau^(e)}) with~(\ref{eq:ThreeInclusions}) we have the {\bf $L^2$-index} $\tau^{(e)}_*\mu[\Dirac^{\cE}]$ of the Dirac operator $\Dirac^{\cE}.$

Note that $\rho$ is a trace and $\rho=\sum_{(g)}\tau^{(g)}$ when adding up $\tau^{(g)}$ over all conjugacy classes of $G$.
This means that the higher index of $\Dirac^{\cE}$ can be localized to each conjugacy class. 

\begin{definition}[\bf Localized index]\label{DefEquiInd}
The number 
\begin{equation*}
\ind_{(g)}\Dirac^{\cE}:=\tau^{(g)}_*(\mu_{\cS(G)}[\Dirac^{\cE}])
\end{equation*} 
is called the {\bf localized $(g)$-index} of $\Dirac^{\cE}$. 
In general we call them {\bf localized indices}.
\end{definition}

\begin{remark}
When the conjugacy class $(g)$ of $g\in G$ has finite elements, $\tau^{(g)}$ extends to a continuous trace on $C^*(G)$. Thus, $\ind_{(g)}\Dirac^{\f{X}}=\tau^{(g)}_*\mu[\Dirac^{\f{X}}]$. In particular, $L^2$-index factor through the higher index in the $K$-theory for the reduced group $C^*$-algebra. 
Another case worth mentioning is that when $G$ has the RD property, we have an isomorphism of $K$-theory $K_0(\cS(G))\cong K_0(C_r^*(G))$. Then all localized $(g)$-indices are images of an element in $K_0(C^*_r(G)).$
In general, not all localized $(g)$-indices follows from the higher index in $K$-theory of the group $C^*$-algebra. Fortunately, this does not affect the results of our paper.
\end{remark}

Let $Q$ be a parametrix of $\Dirac^{\cE}_+$ in the sense of Proposition \ref{prop:EllipticParametrix}, then there exist $G$-invariant smoothing operators $S_0$ and $S_1$ where
\begin{equation}
\label{eq:Elliptic1-QD=smooth}
1-Q\Dirac^{\cE}_+=S_0\text{ and }1-\Dirac^{\cE}_+Q=S_1.
\end{equation}
The following proposition provides an explicit calculation of the localized $(g)$-index in terms of $\tr^{(g)}$ ({\em cf.} Definition~\ref{def:(g)trace}). 
Note that by Lemma~\ref{lem:SmoothingToProperlySupportedSmoothing}, $\sum_{g\in G}g(c^{\frac12}S_j^2c^{\frac12})$ is a $G$-invariant properly supported smoothing operator for $j=0,1$, then it of $(g)$-trace class, i.e., $\tr^{(g)}\left(\sum_{g\in G}g(c^{\frac12}S_j^2c^{\frac12})\right)<\infty$.

\begin{proposition} 
\label{prop:LocalizedIndex=SupertraceSmoothing} 
The localized $(g)$-index of $\Dirac^{\cE}$ is calculated by
\begin{equation*}
\ind_{(g)} \Dirac^{\cE}=\tau^{(g)}_*\left(\nu[\Dirac^{\cE}]\right)=\tr^{(g)}\left(\sum_{g\in G}g(c^{\frac12}S_0^2c^{\frac12})\right)-\tr^{(g)}\left(\sum_{g\in G}g(c^{\frac12}S_1^2c^{\frac12})\right).
\end{equation*}
Here, $\tau^{(g)}$ is the localized $(g)$-trace given by Definition~\ref{def:Localized(g)Trace}, which induces a $K$-theory homomorphism $\tau^{(g)}_*: K_0(\cS(G))\rightarrow\R$, and $\tr^{(g)}$ is the $(g)$-trace given by Definition~\ref{def:(g)trace}. 
\end{proposition}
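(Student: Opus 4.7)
The plan is to realize the Banach-algebra higher index $\mu_{\cS(G)}[\Dirac^{\cE}]$ as an explicit difference of idempotents built from the parametrix data, then apply $\tau^{(g)}_*$ entrywise and invoke Lemma~\ref{lem:VeryImportantLemma} to match each surviving term with a $(g)$-trace of an averaged smoothing operator.

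First, using the relations $S_0 = 1 - Q\Dirac^{\cE}_+$ and $S_1 = 1 - \Dirac^{\cE}_+ Q$, I would write down the standard Connes--Moscovici graph idempotent
\begin{equation*}
e(\Dirac^{\cE}_+, Q) \;=\; \begin{pmatrix} S_0^{2} & S_0(1+S_0)\,Q \\ S_1\,\Dirac^{\cE}_+ & 1-S_1^{2} \end{pmatrix}, \qquad e_0 \;=\; \begin{pmatrix} 0 & 0 \\ 0 & 1 \end{pmatrix}.
\end{equation*}
A direct computation shows $e^2 = e$ and that $e - e_0$ has entries in the ideal of $G$-invariant smoothing operators (since each entry is obtained from $S_0,S_1$ by left/right multiplication by properly supported operators). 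Hence $[e]-[e_0]$ defines a K-theory class in the unitization of that smoothing ideal.

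Second, I would transport this idempotent class to $K_0(\cS(G))$ through the compression $q(\,\cdot\,)\iota$ of Section~\ref{sec:K-IndexD}. Any $G$-invariant properly supported smoothing operator $T$ on $L^2(\f{X},\cE)$ becomes, after averaging $T \mapsto \sum_{g\in G} g(c^{1/2}Tc^{1/2}) = q\widetilde{T}\iota$, a compact operator on the Banach $\cS(G)$-module $\cA_{\cS(G)} = p[\cS(G, L^2(\f{X},\cE))]$; the standard Morita-type isomorphism gives $K_0(\cK(\cA_{\cS(G)})) \cong K_0(\cS(G))$. I would check that, under this identification, the resulting class agrees with $\mu_{\cS(G)}[\Dirac^{\cE}]$ as represented by the $KK$-cycle in Proposition~\ref{prop:HigherIndexKKBan}: the two bounded transforms $F$ and the parametrix-based idempotent yield $KK$-equivalent cycles because both depend only on the symbol of $\Dirac^{\cE}$ modulo smoothing perturbations, and are connected by the usual straight-line homotopy through Fredholm modules.

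Third, I would apply the trace $\tau^{(g)}_*$ to $[e] - [e_0]$ entrywise. The off-diagonal entries contribute zero, and the $1$ in the $(2,2)$-slot of $e$ cancels against $e_0$, leaving
\begin{equation*}
\tau^{(g)}_{*}(\mu_{\cS(G)}[\Dirac^{\cE}]) \;=\; \tau^{(g)}\!\left(\Tr_s\bigl(q\widetilde{S_0^{2}}\iota\bigr)\right) \;-\; \tau^{(g)}\!\left(\Tr_s\bigl(q\widetilde{S_1^{2}}\iota\bigr)\right).
\end{equation*}
Each term fits exactly the hypothesis of Lemma~\ref{lem:VeryImportantLemma} with $S = S_j^{2}$, so it equals $\tr^{(g)}_s\!\left(\sum_{g\in G} g\bigl(c^{1/2}S_j^{2}c^{1/2}\bigr)\right)$, producing the stated formula.

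The main technical obstacle is the identification in the second step: the Banach higher index in Definition~\ref{def:BanachVHigherIndex} is formulated via $j^{G}_{\cS(G)}$ applied to the bounded-transform cycle $(L^2(\f{X},\cE),\phi,F)$, whereas the parametrix construction produces an idempotent from the unbounded operator $\Dirac^{\cE}_+$ and $Q$. Showing that these two descriptions give the same class in $K_0(\cS(G))$ requires either constructing an explicit operator homotopy (through bounded transforms parametrized by $s \mapsto \Dirac^{\cE}(s + (\Dirac^{\cE})^2)^{-1/2}$) or invoking the general equivalence of the index pairing defined by Fredholm modules and the pairing defined by parametrix idempotents in the Banach $KK$-theoretic setting of Lafforgue. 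The remaining items — compactness of averaged smoothing operators on $\cA_{\cS(G)}$, vanishing of off-diagonal traces, and the final reduction via Lemma~\ref{lem:VeryImportantLemma} — are routine.
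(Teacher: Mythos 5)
Your proposal is correct and follows essentially the same route as the paper: build the parametrix graph idempotent from $S_0,S_1,\Dirac^{\cE}_+,Q$, compress via $q(\cdot)\iota$ to the module $\cA_{\cS(G)}$, identify the resulting class with $\mu_{\cS(G)}[\Dirac^{\cE}]$, and then reduce the entrywise $\tau^{(g)}$ computation to $\tr^{(g)}$ via Lemma~\ref{lem:VeryImportantLemma}. The only presentational difference is that the paper first compresses $\Dirac^{\cE}_+$, $Q$, $S_j$ to operators $P$, $Q$, $T_j$ on $\cA_{\cS(G)}$ and then reads off the idempotent from the boundary map $K_1(\cB(\cA_{\cS(G)})/\cS(\cA_{\cS(G)}))\to K_0(\cS(\cA_{\cS(G)}))$, whereas you build the idempotent upstairs on $L^2(\f{X},\cE)$ first; the ``technical obstacle'' you flag (matching the bounded-transform $KK$-cycle with the parametrix idempotent) is exactly what the paper's appeal to the Fredholm picture and the boundary map takes care of.
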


\begin{proof}
Let $\tilde F$ be the lift of $F=\Dirac^{\cE}(1+(\Dirac^{\cE})^2)^{-\frac12}$ from $\Gamma_c(\f{X}, \cE)$ to $\cS(G, L^2(\f{X}, \cE))$ defined by~(\ref{eq:LiftedOperatorF}).
Recall from~(\ref{eq:HigherIndexKKban}) that the Banach algebra version higher index for $\Dirac^{\cE}$ is given by
\begin{equation*}
\mu_{\cS(G)}[F]=[(\cA_{\cS(G)}, 1_{\C}, q\circ\tilde F\circ \iota)]\in KK^{ban}(\C, \cS(G)).
\end{equation*}
where $\cA_{\cS(G)}=q(\cS(G, L^2(\f{X}, \cE)))$ is the Banach $\cS(G)$-module as in~(\ref{eq:A_S(G)}).

Denote $\cB(\cA_{\cS(G)})$ by the set of bounded operators on $\cA_{\cS(G)}$.
Let $\cK(\cA_{\cS(G)})$ be the closed ideal of compact operators over $\cA_{\cS(G)}$.
The algebra $\cK(\cA_{\cS(G)})$ is the closure in the norm for $\cB(\cA_{\cS(G)})$ of the set of integral operators with $G$-invariant continuous kernel and with proper support.
Let $\cS(\cA_{\cS(G)})\subset \cK(\cA_{\cS(G)})$ be an ideal of local Banach algebra in $\cB(\cA_{\cS(G)})$ where
\begin{enumerate}
\item $\cS(\cA_{\cS(G)})$ is closed under holomorphic  functional calculus.
\item $\cS(\cA_{\cS(G)})$ contains the algebra of $G$-invariant properly supported operators with smooth kernel, on which the localized trace, in the sense of Lemma~\ref{lem:VeryImportantLemma}, is well-defined.
\end{enumerate}
In view of~\cite[Section 2]{Connes:Kyoto83}, the algebra $\cS(\cA_{\cS(G)})$ exists and the densely defined  localized traces, viewed as degree $0$ cyclic cocycles, can be extended to this algebra.

By the Fredholm picture of $KK^{ban}(\C, \cS(G))$, the operator $q\tilde F \iota$ is invertible in $\cB(\cA_{\cS(G)})$ up to a compact operator. 
In fact, as $\iota \circ q=\mathrm{id}$ and $q\circ \iota=p\ast$ is the identity map on $\cA_{\cS(G)}$, from~(\ref{eq:Elliptic1-QD=smooth}) we obtain
\begin{equation*}
1-(q\widetilde{Q_{\f{X}}}\iota)(q\widetilde{\Dirac^{\cE}}\iota)=q\widetilde{S_0}\iota
\text{   and   }
1-(q\widetilde{\Dirac^{\cE}}\iota)(q\widetilde{Q_{\f{X}}}\iota)=q\widetilde{S_1}\iota \text{   on } \cA_{\cS(G)}.
\end{equation*}
Observe that $q\widetilde{S_j}\iota\in\cS(\cA_{\cS(G)})$. 
Thus, $P:=q\widetilde{\Dirac^{\cE}}\iota$ and $Q:=q\widetilde{Q_{\f{X}}}\iota$ represent elements of $K_1(\cB(\cA_{\cS(G)})/\cS(\cA_{\cS(G)})).$
In the meanwhile denote by 
\begin{equation*}
T_j=(q\widetilde{S_j}\iota)^2=q\widetilde{S_j}^2\iota.
\end{equation*}
Moreover, the set of compact operators $\cK(\cA_{\cS(G)})$ over the Banach $\cS(G)$-module $\cA_{\cS(G)}$ is Morita equivalent to $\cS(G)$. Thus, we have 
\begin{equation*}
K_0(\cS(\cA_{\cS(G)}))\rightarrow K_0(\cK(\cA_{\cS(G)}))\cong K_0(\cS(G)).
\end{equation*}
Therefore, the standard identification between $KK^{ban}(\C, \cS(G))$ and $K_0(\cS(G))$ are given by the boundary map
\begin{align*}
KK^{ban}(\C, \cS(G))&\rightarrow K_1(\cB(\cA_{\cS(G)})/\cS(\cA_{\cS(G)}))\rightarrow K_0(\cS(\cA_{\cS(G)}))\rightarrow  K_0(\cS(G))\\
[(\cA, 1_{\C}, P)]&\mapsto[P]\mapsto \left[\begin{pmatrix}T_0^2&T_0(1+T_0)Q\\PT_1&1-T_1^2\end{pmatrix}\right]-\left[\begin{pmatrix}0&0\\0&1\end{pmatrix}\right]\in M_2(\cS(\cA_{\cS(G)})^+).
\end{align*} 
Here, $\cS(\cA_{\cS(G)})^{+}$ is the unitalization of $\cS(\cA_{\cS(G)})$. 
By convention, the trace of this identity element in $\cS(\cA_{\cS(G)})^{+}\backslash \cS(\cA_{\cS(G)})$ is always assumed to be $0$ (See~\cite{Connes:Kyoto83}).
Thus, we have
\begin{equation*}
\tau^{(g)}_*(\mu[\Dirac^{\cE}])=\tau^{(g)}_*([T_0^2])-\tau^{(g)}_*([T_1^2]).
\end{equation*}
To find the trace on the right hand side, we shall regard $q\widetilde{S_j}\iota$ as a matrix with coefficient in $\cS(G)$.
For $j=1,2$, similar to the calculation of~(\ref{eq:OrbioldKIndexCompression}), we have 
\begin{equation*}
T_j^2=q\widetilde{S_j}^2 \iota=\sum_{g\in G}g\left(c^{\frac12}S_j^2c^{\frac12}\right).
\end{equation*}
Therefore, by~(\ref{eq:Tau(g)onKTheory}), which defines $\tau^{(g)}_*$, and Lemma~\ref{lem:VeryImportantLemma}, we obtain  
\begin{equation*}
\tau^{(g)}_*([T_j^2])=\tr^{(g)}(T_j^2)\in\R.
\end{equation*}
The proposition is then proved.
\end{proof}

An immediate corollary to Proposition~\ref{prop:LocalizedIndex=SupertraceSmoothing} is that the localized $(g)$-trace may be computed by the supertrace of the heat kernel of $\Dirac^{\cE}$.
In fact, for each $t>0$, choose a parametrix $Q_{\f{X}}$ so that $S_0=e^{-t\Dirac^{\cE}_+\Dirac^{\cE}_-/2}$ and $S_1=e^{-t\Dirac^{\cE}_-\Dirac^{\cE}_+/2}$ in~(\ref{eq:Elliptic1-QD=smooth}).

\begin{corollary}
\label{cor:MckeanSinger}
$\ind_{(g)}(\Dirac^{\cE})=\tr_s^{(g)}\left(e^{-t(\Dirac^{\cE})^2}\right).
$\end{corollary}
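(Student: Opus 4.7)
The strategy is to apply Proposition~\ref{prop:LocalizedIndex=SupertraceSmoothing} to a parametrix constructed from the heat semigroup, and then invoke the heat-kernel analysis of Section~\ref{sec:EllipticPDO}. For fixed $t>0$, I would take $Q := \Dirac^{\cE}_- \, \psi_t((\Dirac^{\cE})^2)$, where $\psi_t(\lambda) = (1-e^{-t\lambda/2})/\lambda$. The functional calculus yields
\begin{equation*}
1 - Q\Dirac^{\cE}_+ \;=\; S_0 := e^{-t\Dirac^{\cE}_-\Dirac^{\cE}_+/2},
\qquad 1 - \Dirac^{\cE}_+Q \;=\; S_1 := e^{-t\Dirac^{\cE}_+\Dirac^{\cE}_-/2}.
\end{equation*}
By Theorem~\ref{thm:HeatKernelAsympMain} both $S_0$ and $S_1$ are $G$-invariant smoothing operators on $\cE_+$ and $\cE_-$ respectively. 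Their squares $S_0^2 = e^{-t\Dirac^{\cE}_-\Dirac^{\cE}_+}$ and $S_1^2 = e^{-t\Dirac^{\cE}_+\Dirac^{\cE}_-}$ assemble along the $\Z/2\Z$-grading $\cE=\cE_+\oplus\cE_-$ into the full heat operator $e^{-t(\Dirac^{\cE})^2}$.

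With this choice of parametrix, Proposition~\ref{prop:LocalizedIndex=SupertraceSmoothing} reads
\begin{equation*}
\ind_{(g)}(\Dirac^{\cE}) \;=\; \tr_s^{(g)}(S_{0,t}),
\qquad S_{0,t} \;:=\; \sum_{g\in G} g\cdot\bigl(c^{1/2}\, e^{-t(\Dirac^{\cE})^2}\, c^{1/2}\bigr),
\end{equation*}
the difference of the two $(g)$-traces appearing in the proposition being precisely the $(g)$-supertrace of $S_{0,t}$ under the grading of $\cE$. Note in particular that the right-hand side is independent of $t$, since the left-hand side is.

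The remaining task — and the point where care is needed — is to replace $S_{0,t}$ by the heat operator $e^{-t(\Dirac^{\cE})^2}$ itself inside the $(g)$-trace. A direct comparison is delicate: Lemma~\ref{lem:SmoothingToProperlySupportedSmoothing} guarantees that the Schwartz kernels of $S_{0,t}$ and $e^{-t(\Dirac^{\cE})^2}$ agree on the diagonal, whereas the formula of Proposition~\ref{prop:FormulaFor(g)Trace} evaluates the kernel at off-diagonal points $(hx,x)$, where the two kernels differ. My plan is to circumvent this by a limiting argument: the Corollary to Proposition~\ref{prop:HeatOp(g)TraceClass} states
\begin{equation*}
\tr_s^{(g)}\bigl(e^{-t(\Dirac^{\cE})^2}\bigr) \;=\; \lim_{t\to 0^+} \tr_s^{(g)}(S_{0,t}).
\end{equation*}
Since we have just shown that $\tr_s^{(g)}(S_{0,t}) = \ind_{(g)}(\Dirac^{\cE})$ is constant in $t$, taking the limit collapses to $\tr_s^{(g)}\bigl(e^{-t(\Dirac^{\cE})^2}\bigr) = \ind_{(g)}(\Dirac^{\cE})$, valid for every $t>0$ (and consistent with the $t$-independence of the left side shown in Proposition~\ref{prop:HeatOp(g)TraceClass}). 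This yields the desired McKean--Singer type identity.
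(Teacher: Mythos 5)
Your proof is correct and takes essentially the same route as the paper: choose the parametrix so that $S_0$ and $S_1$ are half-time heat semigroups and invoke Proposition~\ref{prop:LocalizedIndex=SupertraceSmoothing}. You make explicit a step the paper elides as ``immediate'': Proposition~\ref{prop:LocalizedIndex=SupertraceSmoothing} yields $\ind_{(g)}(\Dirac^{\cE})=\tr_s^{(g)}(S_{0,t})$ for the properly supported surrogate $S_{0,t}=\sum_{g\in G}g\bigl(c^{1/2}e^{-t(\Dirac^{\cE})^2}c^{1/2}\bigr)$, not directly for $e^{-t(\Dirac^{\cE})^2}$, and since the $(g)$-trace integrates the kernel at off-diagonal points $(hx,x)$ where these two operators differ, your appeal to the corollary of Proposition~\ref{prop:HeatOp(g)TraceClass} together with the $t$-independence of $\tr_s^{(g)}(S_{0,t})$ is precisely the needed bridge. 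Note also that your ordering $S_0=e^{-t\Dirac^{\cE}_-\Dirac^{\cE}_+/2}$, $S_1=e^{-t\Dirac^{\cE}_+\Dirac^{\cE}_-/2}$ is the consistent one, since $S_0=1-Q\Dirac^{\cE}_+$ acts on $\cE_+$ and $S_1=1-\Dirac^{\cE}_+Q$ on $\cE_-$; the subscripts in the paper's sentence preceding the corollary are transposed.
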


We are now ready to state the main theorem of this section. We combine the heat kernel asymptotics in Theorem~\ref{thm:HeatKernelAsymptotics} with Corollary~\ref{cor:MckeanSinger} to calculate the localized index in the following theorem.
 
 \begin{theorem}
\label{thm:MainTheorem} 
 Let $\f{X}$ be a complete Riemannian orbifold where a discrete group $G$ acts properly, co-compactly and isometrically. Let $\Dirac^{\cE}$ be  the $G$-invariant Dirac operator on $\f{X}$.
The localized $(g)$-index is calculated by
\begin{equation}\label{eq:LocalIndexFormula}
\ind_{(g)}(\Dirac^{\cE})= \int^{orb}_{(G\backslash \f{X})_{(g)}}\hat{A}_{(g)} (\f{X})   ch^{\cS}_{(g)} (G\backslash\cE),
\end{equation}
where $\hat{A}_{(g)} (\f{X})$ and $ch^{\cS}_{(g)}$ are the delocalized characteristic classes of $G\backslash\f{X}$ restricted to the $(g)$-twisted sector $(G\backslash\f{X})_{(g)}$ given by~(\ref{eq:RestrictionDelocChar}).
\end{theorem}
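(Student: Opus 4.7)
The proof is essentially a direct assembly of two major results established earlier in the paper: the McKean--Singer type identity of Corollary~\ref{cor:MckeanSinger} together with the local heat kernel computation carried out in Theorem~\ref{thm:HeatKernelAsymptotics}. My plan is to chain these together and argue that the left-hand side of~(\ref{eq:LocalIndexFormula}) is computed by the small-time limit of a quantity whose value is exactly the orbifold integral on the right-hand side.

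First, I would invoke Corollary~\ref{cor:MckeanSinger} to rewrite
\begin{equation*}
\ind_{(g)}(\Dirac^{\cE}) = \tr_s^{(g)}\bigl(e^{-t(\Dirac^{\cE})^2}\bigr)
\end{equation*}
for every $t>0$. This step rests on Proposition~\ref{prop:LocalizedIndex=SupertraceSmoothing}, which factors the localized index through the Banach algebra higher index by means of a parametrix, and on the identification of operator $(g)$-traces with localized $(g)$-traces on the group algebra side (Lemma~\ref{lem:VeryImportantLemma}). So the left-hand side of the theorem equals a single number that is independent of the parameter $t$.

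Second, since the identity holds for all $t>0$, I can pass to the limit $t\to 0^+$. Theorem~\ref{thm:HeatKernelAsymptotics} gives precisely
\begin{equation*}
\lim_{t\to 0^+}\tr_s^{(g)}\bigl(e^{-t(\Dirac^{\cE})^2}\bigr)
=\int^{orb}_{(G\backslash \f{X})_{(g)}}\hat{A}_{(g)}(\f{X})\,\ch^{\cS}_{(g)}(G\backslash \cE),
\end{equation*}
which completes the proof. The deep content of this step (the $G$-uniform estimate on $K_t(x, gx)$ from Corollary~\ref{cor:UniformUpperBdSumK(x,gx)}, the reduction of the small-time limit to the fixed-point sub-orbifold via Lemma~\ref{lem:(g)capG_iNonempty}, and the local index calculation on each fixed-point stratum of $(G\backslash \f{X})_{(g)}$) has already been absorbed into the statement of Theorem~\ref{thm:HeatKernelAsymptotics}, so no new analytical work is required at this stage.

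The only mild subtlety I anticipate is bookkeeping: one must verify that the constant value of $\tr_s^{(g)}(e^{-t(\Dirac^{\cE})^2})$ in $t$ is consistent with the limit produced in Theorem~\ref{thm:HeatKernelAsymptotics}. This is handled implicitly by Proposition~\ref{prop:HeatOp(g)TraceClass}, where the $t$-independence of $a(t)=\tr_s^{(g)}(e^{-t(\Dirac^{\cE})^2})$ was derived from the trace property (Proposition~\ref{tracia:property}) via the commutator $[\Dirac^{\cE},\Dirac^{\cE}e^{-t(\Dirac^{\cE})^2}]$. With these ingredients lined up, the theorem follows with essentially no further computation beyond citing the earlier results.
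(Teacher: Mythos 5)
Your proposal is correct and follows exactly the paper's own route: the paper's proof of Theorem~\ref{thm:MainTheorem} is precisely the concatenation of Corollary~\ref{cor:MckeanSinger} (the McKean--Singer identity for $\tr_s^{(g)}$) with Theorem~\ref{thm:HeatKernelAsymptotics} (the small-time evaluation of $\tr_s^{(g)}\bigl(e^{-t(\Dirac^{\cE})^2}\bigr)$ as the orbifold integral over the $(g)$-twisted sector). Your closing remark on consistency is also correctly resolved by the $t$-independence established in Proposition~\ref{prop:HeatOp(g)TraceClass}, so nothing further is needed.
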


This localized indices for $\Dirac^{\cE}$ gives rise to refined topological invariants for the Dirac operator on the orbifold $G\backslash \f{X}.$ In fact, in view of Theorem~\ref{thm:MainTheorem} and Theorem~\ref{thm:HeatKernelAsymptotics}, the following theorem is immediate. We provide in addition a $K$-theory proof.

\begin{theorem}\label{Thm:OrbifoldIndex=SumLocalizedIndex}
The orbifold index on $G\backslash \f{X}$ is the sum of localized $(g)$-indices over all conjugacy classes of $G$, i.e. 
\[
\ind \Dirac^{G\backslash \cE} = \sum_{(g)}\ind_{(g)}(\Dirac^{\cE}).
\]
\end{theorem}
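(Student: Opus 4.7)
The plan is to give the two proofs indicated in the statement: one analytic via the heat kernel, and one purely $K$-theoretic.

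For the analytic proof, I would simply chain together results already established. Corollary~\ref{cor:MckeanSinger} gives $\ind_{(g)}(\Dirac^\cE) = \tr_s^{(g)}(e^{-t(\Dirac^\cE)^2})$ for any $t>0$, and the $t\to 0^+$ analysis in Proposition~\ref{prop:HeatOp(g)TraceClass} combined with Lemma~\ref{lem:(g)capG_iNonempty} shows that only finitely many conjugacy classes, namely those meeting $\bigcup_{i=1}^N G_i$, contribute nonzero terms. Summing over all $(g)$ and invoking identity~(\ref{combined:index}) (which already matches the $(g)$-sum of $\tr_s^{(g)}(e^{-t(\Dirac^\cE)^2})$ with the Kawasaki orbifold index) yields directly
\[
\sum_{(g)} \ind_{(g)}(\Dirac^\cE) \;=\; \sum_{(g)} \tr_s^{(g)}(e^{-t(\Dirac^\cE)^2}) \;=\; \ind \Dirac^{G\backslash\cE}.
\]

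For the $K$-theoretic proof, I would start from the pointwise identity $\rho = \sum_{(g)} \tau^{(g)}$ on the group algebra $\C G$, where on any fixed element only finitely many summands are nonzero. Using Proposition~\ref{prop:LocalizedIndex=SupertraceSmoothing}, the class $\mu_{\cS(G)}[\Dirac^\cE]$ is represented by $T_0^2 - T_1^2$ with $T_j^2 = \sum_{g\in G} g(c^{1/2} S_j^2 c^{1/2})$ for smoothing parametrix remainders $S_j$, and by Lemma~\ref{lem:VeryImportantLemma} the entries of $\Tr_s T_j^2$ lie in $\C G$ with support on only finitely many conjugacy classes (those intersecting some $G_i$, by properness of the $G$-action and compact support of the cut-off $c$). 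Hence $\sum_{(g)}\tau^{(g)}_*\mu_{\cS(G)}[\Dirac^\cE]$ is a finite sum which equals $\rho$ applied to this same group-algebra element, and after tracking through the inclusion $\cS(G) \hookrightarrow C^*_r(G)$ and surjection $C^*(G) \twoheadrightarrow C^*_r(G)$ the result equals $\rho_*\mu[\Dirac^\cE]$. Theorem~\ref{Thm:TrivialRepK-theoreticIndex=OrbifoldIndex} then identifies this with $\ind \Dirac^{G\backslash\cE}$.

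The main obstacle in the $K$-theoretic approach is the compatibility of the different completions of $\C G$: $\rho$ is only guaranteed to extend continuously to the maximal $C^*(G)$, whereas each $\tau^{(g)}$ is defined on $\cS(G) \subset C_r^*(G)$, and for a general element of $K_0(\cS(G))$ the formal sum $\sum_{(g)}\tau^{(g)}$ need not converge nor define a continuous trace. The key point resolving this is that the explicit parametrix representative of $\mu_{\cS(G)}[\Dirac^\cE]$ furnished by Lemma~\ref{lem:VeryImportantLemma} has supertrace already living in $\C G$ with finite support on conjugacy classes, so the identity $\rho = \sum_{(g)}\tau^{(g)}$ applies literally, without any continuity subtlety. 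The analytic proof avoids this issue altogether, which is why the statement is labelled ``immediate'' from the two prior theorems.
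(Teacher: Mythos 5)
Both of your proofs are correct and together they recover exactly what the paper does: the analytic route is precisely the ``immediate'' observation the paper makes before the theorem (chaining Corollary~\ref{cor:MckeanSinger} with the identity~(\ref{combined:index}), which in turn is Theorem~\ref{thm:HeatKernelAsymptotics} plus Kawasaki's theorem), while your $K$-theoretic argument is a close relative of the paper's additional $K$-theoretic proof. The difference in the $K$-theoretic argument is worth noting. The paper works with \emph{arbitrary} $\cS(G)$-valued projection matrices $P_0,P_1$ representing the class, reduces to the scalar statement $\rho_*([P])=\sum_{(g)}\tau^{(g)}_*([P])$, and closes by observing that $\rho(P)$ is a complex projection matrix, so its rank (which gives $\rho_*([P])$) equals its trace (which equals $\sum_{(g)}\Tr\tau^{(g)}(P)$); this is a clean structural observation, but it quietly assumes that $\rho$ extends to the entries of $P$ and that $\rho(P)=\sum_{(g)}\tau^{(g)}(P)$ with the trace commuting through the sum --- fine when $\cS(G)=L^1(G)$, but glossed over in general. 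You instead go back to the explicit parametrix representative $T_0^2-T_1^2$ from Proposition~\ref{prop:LocalizedIndex=SupertraceSmoothing}, whose supertrace lands in $\C G$ with finite conjugacy-class support by Lemma~\ref{lem:VeryImportantLemma}; this turns $\rho=\sum_{(g)}\tau^{(g)}$ into a literal finite identity, which is the cleaner way to resolve the convergence subtlety and is well worth having spelled out. One small imprecision in your writeup: $\rho$ does not factor through $C_r^*(G)$, so ``tracking through'' the maps $\cS(G)\hookrightarrow C_r^*(G)$ and $C^*(G)\twoheadrightarrow C_r^*(G)$ is not quite the right justification for matching against $\rho_*\mu[\Dirac^{\cE}]$; the cleaner statement is that the \emph{same} parametrix computation produces an analogous representative of $\mu[\Dirac^{\cE}]$ in $K_0(C^*(G))$ with supertrace in $\C G$, and $\rho$ applied to that finitely supported element is a straightforward sum of coefficients, yielding the same number.
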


\begin{proof} Note that $\ind \Dirac^{G\backslash \cE} =\rho_*(\mu[\Dirac^{\cE}])$ and 
$\ind_{(g)}(\Dirac^{\cE})=\tau^{(g)}_*(\mu[\Dirac^{\cE}]) $. 
Denote by $P_0$ and $P_1$ two $\cS(G)$-valued projection matrices ($P_i^2=P_i=P_i^*$ where $i=0, 1$) such that $\mu[\Dirac^{\cE}]=[P_0]-[P_1]\in K_0(\cS(G)).$
As the localized indices for Dirac operators are finite, $|\tau^{(g)}_*([P_i])|<\infty.$
We need only to show that 
\begin{equation}
\label{eq:PfInd=sumInd}
\rho_*([P])=\sum_{(g)}\tau^{(g)}_*([P]), \qquad P=P_i,\qquad  i=0\text{ or }1.
\end{equation}
Denote by $\rho(P)$ (resp., $\tau^{(g)}(P)$) the $\C$-valued matrix whose $(i,j)$th-entry is $\rho$ (resp., $\tau^{(g)}$) applied to the $(i,j)$th-entry of $P$. 
Then, the left hand side of~(\ref{eq:PfInd=sumInd}) is equal to the rank of $\rho(P)$, and the right hand side of~(\ref{eq:PfInd=sumInd}) is the sum of the trace of $\tau^{(g)}(P)$ over all conjugacy classes of $G$, which is  also the trace of $\rho(P)$ observing that $\rho(P)=\sum_{(g)}\tau^{(g)}(P).$ Hence, it is sufficient to show that the rank of $\rho(P)$ equals its trace.

As $\rho$ is a homomorphism, the image $\rho(P)$ of the projection $P$ is still a projection, i.e., $\rho(P)^2=\rho(P)=\rho(P)^*.$ The $\C$-valued projection $\rho(P)$ is then unitary equivalent to a diagonal matrix $Q$ whose entries are either $1$ or $0$. Note that the trace and rank are invariant under unitary equivalence.  
Thus, the rank of $\rho(P)$ is the same as its trace.
This completes the proof of the theorem.
\end{proof}

\begin{remark}
Theorem~\ref{thm:MainTheorem} gives rise to a local index formula for Dirac operators. 
Then the localized index can be defined for any $G$-invariant elliptic operator $D$ on $\f{X}$ by adapting the argument of~\cite{W:2012} to the case of orbifold. In fact, $D$ gives rise to an element in $K^0_G(C^*_{red}(\f{X}))$ by Lemma~\ref{le:KHomologyElement}. Then using a similar construction as in~\cite[Section 7]{ABP:Invent73} and in~\cite{W:2012}, we can find a $G$-invariant Dirac type operator $\Dirac$, representing the same $K$-homology class as $D$. Hence $\Dirac$ and $D$ have the same higher index and localized indices. Thus, localized index formulae are well-defined for $[D]\in K^0_G(C^*_{red}(\f{X}))$ and is calculated by local index formula~(\ref{eq:LocalIndexFormula}) for the Dirac operator $\Dirac.$ 
\end{remark}

\section{Applications and Further Remarks}
\label{sec:Applications and Further Remarks}

\subsection{$L^2$-Lefschtez fixed  point formula} To illustrate that Theorem \ref{thm:MainTheorem}  is indeed an $L^2$-Lefschtez fixed  point formula for non-compact orbifolds. We restrict ourselves to  the case of a complete Riemannian manifold where a discrete group $G$ acts properly, co-compactly and isometrically.

Let $\f{X}$ be a good orbifold arising from a  complete Riemannian manifold  $M$ with a proper , co-compact  and isometric action of a   discrete group $G$.     
In this situation, the twisted sector of the orbifold $\f{X}=G\backslash M$ is simply indexed by the conjugacy classes of the group $G.$
Then over $M$ associated to each conjugacy class $(g)\subset G$ we have the localized $(g)$-index   for the $G$-invariant Dirac operator $\Dirac^{\cE}$.  
By Theorem~\ref{thm:MainTheorem} we know that 
\begin{enumerate}
\item When $g$ is the group identity, the localized index of $\Dirac^{\cE}$, also known as the $L^2$-index, gives rise to the top stratum of the Kawasaki index formula for the Dirac operator $\Dirac^{G\backslash\cE}$ on $\f{X}=G\backslash M.$ 
\item When $g\in G$ is not the group identity,  the localized indices of $\Dirac^{\cE}$ characterize the lower strata of the orbifold index formula of $\Dirac^{G\backslash\cE}$. 
\end{enumerate}
Therefore, we have related the higher index of $\Dirac^{\cE}$ to the orbifold index restricted to each twisted sector $\f{X}_{(g)}$ by the localized $(g)$-trace ({\em cf.} Definition~\ref{def:Localized(g)Trace}). 

Denote by $M^g$ the fixed point submanifold of $M$ by $g\in G.$ Then the component for the inertia orbifold $I(G\backslash M)$
has the following structure indexed by the conjugacy class $(g)$ of $G$
\begin{equation}
(G\backslash M)_{(g)}=G\backslash\cup_{h\in(g)}M^h=Z_G(g)\backslash M^g.
\end{equation}

In the following, we shall derive from Theorem~\ref{thm:MainTheorem} a formula of $\ind^{(g)}\Dirac^{\cE}$ as integration over fixed point submanifolds by introducing a suitable cut-off function. 
For example, when $g=e$, the localized $(e)$-index of $\Dirac^{\cE}$ is 
the $L^2$-index of $\Dirac^{\cE}$ and is equal to the top stratum of the formula for $\ind\Dirac^\cE_{G\backslash M}$:
\begin{equation}
\label{eq:L^2} 
L^2\text{-}\ind(\Dirac^{\cE})=\int^{orb}_{G\backslash M} \hat{A}(G\backslash M )\ch^{\cS}(G\backslash \cE)=\int_{M}c(x)\hat A(M )\ch^{\cS}(\cE).
\end{equation} 
where $c$ is a cut-off function on $M$ with respect to the $G$ action.
We shall show in this subsection a localized index formula for all $g\in G$ in the fashion of~(\ref{eq:L^2}).  

Note that $Z_G(g)$ acts on $M^g$ isometrically. Given a cut-off function $c$ on $M$ with respect to the $G$ action.  We construct a function $c^{(g)}$ on $ M^{g}$ as follows:
\begin{equation}
\label{eq: cut-offSubmfd}
c^{(g)}(y)=\sum_{k\in G/Z_G(g)}c(k^{-1}y) \quad y\in M^g
\end{equation}
where $G/Z_G(g)$ is identified as a subset $K$ of $G$ in view of Lemma~\ref{lem:K^GZ_G=G} (See~(\ref{ProofTrace}) for the definition of $K$).
By Lemma~\ref{lem:K^GZ_G=G}, the function given by~(\ref{eq: cut-offSubmfd}) is in fact a cut-off function on $M^g$ with respect to the $Z_G(g)$-action: 
\begin{equation*}
\sum_{l\in Z_G(g)}c^{(g)}(l^{-1}y)=\sum_{l\in Z_G(g)}\sum_{k\in K}c(l^{-1}k^{-1}y)=\sum_{g\in G}c(g^{-1}y)=1.
\end{equation*}
The localized $(g)$-index of $\Dirac^{\cE}$ is  given 
by 
\begin{equation*}\begin{array}{lll}
\ind_{(g)}(\Dirac^{\cE})& = & \disp{\int^{orb}_{  \f{X}_{(g)}}  } \dfrac{ \hat{A}( \f{X}_{(g)})
\ch^\cS_{(g)}(G\backslash \cE)}{\det \big(1-\Phi_{(g)}  e^{R_{\cN_{(g)}}/2\pi i}  \big)^{\frac12}}
\\[3mm]
&=& \disp{\int_{M^g}} c^{(g)}(x)   \dfrac{\hat A(M^g)\ch^\cS_g(\cE)}{\det(1-g e^{R_{\cN^g}/2\pi i})^{\frac12}}.
\end{array}
\end{equation*}

We summarise these results as the following  $L^2$-version of the Lefschtez fixed  point formula for a complete Riemannian manifold $M$ with a proper co-compact action of a discrete group $G$. 

\begin{theorem}
\label{thm:LocalizedIndexKawasakiIndex}
Let $M$ be a complete Riemannian manifold where a discrete group $G$ acts properly, co-compactly and isometrically. Let $\Dirac^{\cE}$ be the $G$-invariant Dirac operator on $M$ and $\Dirac^\cE_{G\backslash M}$ be the corresponding Dirac operator on the quotient orbifold $G\backslash M.$ 
Then   
$
L^2\text{-}\ind \Dirac^{\cE} = \ind_{(e)} (\Dirac^{\cE}), 
$
the  localized  index of $\Dirac^{\cE}$ at the identity conjugacy class $(e)$. For   $ g \neq e$, the localized $(g)$-index of $\Dirac^{\cE}$ is  given 
by 
\[
\ind_{(g)}(\Dirac^{\cE})=    \disp{\int _{M^g}} c^{(g)}(x)   \dfrac{\hat A(M^g)\ch_g^{\cS}(\cE)}{\det(1-ge^{R_{\cN^g}/2\pi i})^{\frac12}}. 
\]
where $c^{(g)}$ is the cut-off function on $M^g$ with respect to the action of $Z_G(g)$ given by~(\ref{eq: cut-offSubmfd}).

\end{theorem}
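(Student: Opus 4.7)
The plan is to derive this statement as a direct specialization of Theorem~\ref{thm:MainTheorem} to the manifold setting, using the cut-off function technique to rewrite orbifold integrals over the quotient $(G\backslash M)_{(g)}$ as ordinary integrals over the fixed-point submanifold $M^g$.

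First I would analyze the structure of the twisted sectors. Since $M$ is a smooth manifold (no orbifold singularities), the set $\cT_{\f{X}}$ of equivalence classes of local isotropy data is trivial, so the inertia orbifold decomposition of $G\backslash M$ reduces to
\begin{equation*}
I(G\backslash M) = \bigsqcup_{(g)} (G\backslash M)_{(g)}, \qquad (G\backslash M)_{(g)} = Z_G(g)\backslash M^g,
\end{equation*}
indexed by conjugacy classes $(g)$ of $G$. In this situation the delocalized $\hat{A}$-class and twisted Chern character, pulled back from $(G\backslash M)_{(g)}$ to $M^g$, coincide with their familiar equivariant representatives
\begin{equation*}
\hat{A}_{(g)}(M) = \frac{\hat{A}(M^g)}{\det\bigl(1-g\,e^{R_{\cN^g}/2\pi i}\bigr)^{1/2}}, \qquad \ch^{\cS}_{(g)}(G\backslash\cE) = \ch^{\cS}_g(\cE),
\end{equation*}
where $\cN^g$ is the normal bundle of $M^g$ in $M$ and $g$ acts fiberwise on $\cN^g$ and on $\cE|_{M^g}$.

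Next I would convert the orbifold integral over $Z_G(g)\backslash M^g$ into an integral over $M^g$. The argument establishing Lemma~\ref{lem:Specialcut-off}, applied now to the proper, co-compact, isometric action of $Z_G(g)$ on $M^g$, shows that $c^{(g)}$ defined by~(\ref{eq: cut-offSubmfd}) is indeed a cut-off function on $M^g$; the computation $\sum_{l\in Z_G(g)} c^{(g)}(l^{-1}y) = \sum_{k\in G/Z_G(g)}\sum_{l\in Z_G(g)} c(l^{-1}k^{-1}y) = \sum_{g\in G} c(g^{-1}y) = 1$ is the key point, and it uses only Lemma~\ref{lem:K^GZ_G=G}. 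Then Lemma~\ref{lem:Propertycut-offFunction}(1), applied to $Z_G(g)\curvearrowright M^g$, gives
\begin{equation*}
\int^{orb}_{Z_G(g)\backslash M^g} \omega = \int_{M^g} c^{(g)}(x)\,\widetilde{\omega}(x),
\end{equation*}
for any compactly supported form $\omega$ on $Z_G(g)\backslash M^g$ with lift $\widetilde{\omega}$ to $M^g$ (which is automatic here because $G\backslash M$ and hence $(G\backslash M)_{(g)}$ is compact).

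Combining these two steps with Theorem~\ref{thm:MainTheorem} yields
\begin{equation*}
\ind_{(g)}(\Dirac^{\cE}) = \int^{orb}_{(G\backslash M)_{(g)}} \hat{A}_{(g)}(M)\,\ch^{\cS}_{(g)}(G\backslash\cE) = \int_{M^g} c^{(g)}(x)\,\frac{\hat{A}(M^g)\,\ch^{\cS}_g(\cE)}{\det\bigl(1-g\,e^{R_{\cN^g}/2\pi i}\bigr)^{1/2}},
\end{equation*}
which is the claimed formula for $g\neq e$. For $g=e$, we have $M^e=M$, $\cN^e$ is the zero bundle (so the denominator is $1$), $\ch^{\cS}_e(\cE)=\ch^{\cS}(\cE)$, and $c^{(e)}=c$ is the original cut-off function on $M$; the formula then specializes to the $L^2$-index formula~(\ref{eq:L^2}), so $\ind_{(e)}(\Dirac^{\cE}) = L^2\text{-}\ind\Dirac^{\cE}$, as was already recorded in the introduction. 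The proof is essentially a bookkeeping exercise once Theorem~\ref{thm:MainTheorem} is in hand; the only slightly delicate point is verifying that the ``averaging'' $c\mapsto c^{(g)}$ correctly converts the $G$-orbit weight on $M$ into the $Z_G(g)$-orbit weight on $M^g$, which is exactly what Lemma~\ref{lem:K^GZ_G=G} provides.
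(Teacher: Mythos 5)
Your proposal is correct and follows essentially the same route the paper takes: the paper's ``proof'' of Theorem~\ref{thm:LocalizedIndexKawasakiIndex} is precisely the preceding discussion in Section~6.1, which identifies $(G\backslash M)_{(g)}$ with $Z_G(g)\backslash M^g$, constructs $c^{(g)}$ and checks via Lemma~\ref{lem:K^GZ_G=G} that it is a cut-off function for the $Z_G(g)$-action on $M^g$, and then rewrites the orbifold integral from Theorem~\ref{thm:MainTheorem} as an integral over $M^g$. Your version just makes the reliance on Lemma~\ref{lem:Propertycut-offFunction}(1) and the identification of $\cT_{\f{X}}$ as trivial explicit, which the paper leaves implicit.
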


\subsection{Selberg trace formula}

We shall present an interesting connection between the localized indices and the orbital integrals in the Selberg trace formula. To begin with, we recall the set up of the Selberg trace formula~\cite{Selberg:56} (See also the survey article \cite{Ar:Clay05}).

Let $G$ be a real unimodular Lie group and $\Gamma$ be a discrete co-compact subgroup of $G$.
Denote by $R$ the right regular unitary representation of $G$ on $L^2(G):$
\begin{equation*}
R(g)f(h)=f(g^{-1}h)\qquad \forall g, h\in G, \forall f\in L^2(G).
\end{equation*}
It extends to a representation of $L^1(G)$ on $L^2(G)$ as follows:
\begin{equation}
\label{eq:RRrepL^1(G)}
R(f)=\int_G f(g)R(g)dg \qquad \forall f\in L^1(G).
\end{equation}
As $\Gamma$ acts on the left of $G$, $R$ is reduced to 
\begin{equation}
\label{ea:RightRegularRep}
R: L^1(G)\longrightarrow \End(L^2(\Gamma\backslash G)).
\end{equation}
Let $f\in C^{\infty}(G)\cap L^1(G)$ be a test function, where $R(f)$ is a trace class operator on $L^2(\Gamma\backslash G).$ The Selberg trace formula is an equality relating two ways in calculating $\Tr R(f)$, where $\Tr$ is the operator trace on $L^2(\Gamma\backslash G).$
On one hand, $\Tr R(f)$ has the spectral decomposition indexed by all irreducible unitary representations of $G$ (denoted by $\Irr(G)$):
\begin{equation}
\label{eq:STFspectral}
\Tr R(f)=\sum_{\pi\in\Irr(G)}m(\pi)\Tr(\pi(f)),
\end{equation}
where $m(\pi)$ is the multiplicity of $\pi$ in $R.$ The equality~(\ref{eq:STFspectral}) is called the {\bf spectral side} of the Selberg trace formula.
On the other hand, the Schwartz kernel $K(x,y)$ of $R(f)$, which has the expression
\begin{equation}
\label{eq:KernelR(f)}
K(x,y)=\sum_{\gamma\in\Gamma}f(x^{-1}\gamma y),
\end{equation}
can be used to calculate $\Tr R(f)$ as follows:
\begin{equation}
\label{eq:OrbitalIntegal}
\Tr R(f)=\int_{\Gamma\backslash G} K(x,x)dx=\sum_{(\gamma)\subset \Gamma}\vol(Z_G(\gamma)/Z_{\Gamma}(\gamma))\int_{Z_G(\gamma)\backslash G}f(x^{-1}\gamma x)dx,
\end{equation}
where the sum is over representatives of all conjugacy classes of $G.$ This equality~(\ref{eq:OrbitalIntegal}) is called the {\bf geometric side} of the Selberg trace formula. We denote the $(\gamma)$-summand in~(\ref{eq:OrbitalIntegal}) by $\cO_{\gamma}$ and call it {\bf orbital integral}:
\begin{equation*}
\cO_{\gamma}:=\vol(Z_G(\gamma)/Z_{\Gamma}(\gamma))\int_{Z_G(\gamma)\backslash G}f(x^{-1}\gamma x)dx.
\end{equation*}
The orbital integral is easier to calculate and is an important tool in finding the multiplicity of a representation.

To relate Selberg trace formula to the localized index, we shall further recall the similar setting appeared in~\cite{CM82, BM83}. 
Let $H$ be a maximal compact subgroup of $G$ with volume $1.$ 
Let $M=G/H$ be the Riemannian symmetric manifold of noncompact type. 
Then the discrete cocompact subgroup $\Gamma$ of $G$ acts properly and co-compactly on $M$.
Let $\cE$ be a $\Z/2\Z$-graded homogeneous vector bundle over $M$, i.e., there exists a $\Z/2\Z$-graded $H$-representation $E$, where $H$ respects the grading of $E$, so that $\cE=G\times_H E$. 
Denote by $\Dirac^{\cE}: L^2(M, \cE)\rightarrow L^2(M, \cE)$ the $G$-invariant Dirac operator. 
We then have the identification 
\begin{equation*}
L^2(M, \cE)\cong (L^2(G)\otimes E)^H.
\end{equation*} 
Therefore, the corresponding Dirac operator $\Dirac^{\Gamma\backslash \cE}$ on the quotient orbifold $\Gamma\backslash M$ is regarded as an operator on $(L^2(\Gamma\backslash G)\otimes E)^H$.
Let us denote by $K_t$ (resp., $\bar K_t$) the heat kernel for $\Dirac^{\cE}$ (resp., $\Dirac^{\Gamma\backslash \cE}$). 
For the same reason of the homogeneity, 
\begin{equation*}
K_t\in (C^{\infty}(G\times G)\otimes\End(E))^{H\times H}\quad \bar K_t\in (C^{\infty}(\Gamma\backslash G\times \Gamma\backslash G)\otimes\End(E))^{H\times H}.
\end{equation*}
As $D^{\cE}$ is $G$-invariant, $K_t$ gives rise to a well-defined function $k_t\in (C^{\infty}(G)\otimes\End(E))^{H\times H}$ given by 
\begin{equation}
\label{eq:HeatKernalTestingFunction}
k_t(x^{-1}y)=K_t(x,y), \quad \forall x, y\in G.  
\end{equation}

We shall still consider the right regular representation of a test function, but we replace the representation space $L^2(\Gamma\backslash G)$ by the $\Z/2\Z$-graded space $(L^2(\Gamma\backslash G)\otimes E)^H$ and the operator trace by the supertrace. 
Note that by~(\ref{eq:KernelR(f)}) and Theorem~\ref{thm:HeatKernelAsympMain}, we observe that the Schwartz kernel of $R(k_t)$ is exactly $\bar K_t(x,y),$ the heat kernel for $\Dirac^{\Gamma\backslash\cE}$
Applying Theorem~\ref{thm:HeatKernelAsympMain} to~(\ref{eq:KernelR(f)}), we see that the supertrace $\Tr_s R(k_t)$ is finite. 
Therefore, even though $k_t$ is not compactly supported on $G$, we can choose $k_t$ as a test function. 
Comparing the localized index formula and the Selberg trace formula in this situation, we obtain the following theorem, which states that the orbital integrals for $\Tr R(k_t)$ have a one-to-one correspondence with the localized indices for $\Dirac^{\cE}.$

\begin{theorem}
\label{thm:SelbergTraceFormula}
Let $k_t$ be the test function~(\ref{eq:HeatKernalTestingFunction}) determined by the heat operator for the Dirac operator $\Dirac^{\cE}$ on the Riemannian symmetric manifold $M=G/H$ of noncompact type. 
Let $R(k_t)$ be the right regular representation of $L^1(G)$ on $(L^2(\Gamma\backslash G)\otimes E)^H$ in the sense of~(\ref{ea:RightRegularRep}).
Then the geometric side of the Selberg trace formula of $\Tr_s(R(k_t))$, which was expressed as a sum of orbital integrals $\cO_{\gamma}$ in~(\ref{eq:OrbitalIntegal}) over all conjugacy classes of $\Gamma$, corresponds exactly to the sum of localized indices for $\Dirac^{\cE}$. 
Moreover, 
\begin{equation*}
\Tr_s(R(k_t))=
\sum_{(\gamma)\subset\Gamma}  \cO_\gamma=  \ind\Dirac^{\Gamma\backslash\cE}, \qquad   \cO_\gamma=  \ind_{(\gamma)}\Dirac^{\cE}.
\end{equation*}
\end{theorem}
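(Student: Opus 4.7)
The plan is to identify each piece of the Selberg trace formula for $\Tr_s R(k_t)$ with an object we already understand from the localized index theory. I would proceed in four steps.

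First I would fix the identification of the heat operators. Using $L^{2}(M,\cE)\cong(L^{2}(G)\otimes E)^{H}$, the $G$-invariance of $\Dirac^{\cE}$ shows that $e^{-t(\Dirac^{\cE})^{2}}$ acts by left convolution with $k_t$ in the sense of \eqref{eq:HeatKernalTestingFunction}, so that $K_t(g_1H,g_2H)$ is represented by $k_t(g_1^{-1}g_2)$ (an endomorphism of $E$, with the appropriate $H$-biequivariance). Passing to $\Gamma$-invariants and using Theorem~\ref{thm:HeatKernelAsympMain}, the kernel of $R(k_t)$ on $(L^{2}(\Gamma\backslash G)\otimes E)^{H}$ is exactly $\bar K_t(x,y)=\sum_{\gamma\in\Gamma}K_t(x,\gamma y)\gamma$, so $R(k_t)$ is the heat operator of $\Dirac^{\Gamma\backslash\cE}$. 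By the classical McKean-Singer identity on the closed orbifold $\Gamma\backslash M$, this gives $\Tr_s R(k_t)=\ind\Dirac^{\Gamma\backslash\cE}$.

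Second, I would write down the geometric side of the Selberg trace formula for this specific $k_t$. Since $\bar K_t$ is smooth and $\Gamma\backslash M$ is compact, the estimates from Theorem~\ref{thm:HeatKernelAsympMain} (and Corollary~\ref{cor:UniformUpperBdSumK(x,gx)}) ensure absolute convergence of $\sum_{\gamma\in\Gamma}\Tr_s k_t(x^{-1}\gamma x)$; the usual unfolding argument then yields
\begin{equation*}
\Tr_s R(k_t)=\sum_{(\gamma)\subset\Gamma}\vol\bigl(Z_G(\gamma)/Z_\Gamma(\gamma)\bigr)\int_{Z_G(\gamma)\backslash G}\Tr_s k_t(x^{-1}\gamma x)\,dx=\sum_{(\gamma)}\cO_\gamma.
\end{equation*}

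Third, and this is the heart of the matter, I would identify $\cO_\gamma$ with $\ind_{(\gamma)}\Dirac^{\cE}$. By Corollary~\ref{cor:MckeanSinger} and Theorem~\ref{thm:LocalizedIndexKawasakiIndex} applied to the manifold $M$,
\begin{equation*}
\ind_{(\gamma)}\Dirac^{\cE}=\int_{M}c^{(\gamma)}(x)\,\Tr_s\bigl[\gamma^{-1}K_t(\gamma x,x)\bigr]\,d\vol_{M}(x),
\end{equation*}
where $c^{(\gamma)}$ is the cut-off for the $Z_\Gamma(\gamma)$-action on $M^{\gamma}=M$. Substituting the homogeneous identity $\Tr_s[\gamma^{-1}K_t(\gamma gH,gH)]=\Tr_{s}^{E}[k_t(g^{-1}\gamma g)]$ (which is $H$-invariant on the right in $g$), the cut-off collapses the integral to a $Z_\Gamma(\gamma)$-fundamental domain in $M=G/H$. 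Because the integrand is additionally invariant under the left action of $Z_G(\gamma)$ via $g\mapsto zg$, a Fubini step reduces the integral over $Z_\Gamma(\gamma)\backslash G/H$ to an integral over $Z_G(\gamma)\backslash G$ times the covolume $\vol(Z_\Gamma(\gamma)\backslash Z_G(\gamma))$, which is precisely $\cO_\gamma$.

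Fourth, combining the three steps gives $\Tr_s R(k_t)=\sum_{(\gamma)}\cO_\gamma=\sum_{(\gamma)}\ind_{(\gamma)}\Dirac^{\cE}=\ind\Dirac^{\Gamma\backslash\cE}$, the last equality being Theorem~\ref{Thm:OrbifoldIndex=SumLocalizedIndex} (so this also serves as a consistency check). The main obstacle will be step three: one has to keep the $H$-biequivariance of $k_t$, the choice of normalization of $d\vol_M$ induced from $dg/dh$, and the covolume factor $\vol(Z_\Gamma(\gamma)\backslash Z_G(\gamma))$ mutually consistent, so that the passage from a $Z_\Gamma(\gamma)$-fundamental domain in $G/H$ to the orbital integral $\int_{Z_G(\gamma)\backslash G}$ introduces exactly the Selberg covolume and no extra factor. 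Once this bookkeeping is set, the identification is automatic.
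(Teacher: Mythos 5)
Your plan follows essentially the same route as the paper's proof. The core of the argument — fixing the homogeneous identification $K_t(x,y)=k_t(x^{-1}y)$, writing $\ind_{(\gamma)}\Dirac^{\cE}$ as a cut-off integral against $c^{(\gamma)}=\sum_{k\in K}c(k\cdot)$, decomposing $G=Z_G(\gamma)\cdot(Z_G(\gamma)\backslash G)$, using $Z_G(\gamma)$-invariance of the integrand under $x\mapsto bx$ to separate variables, and recognizing the inner cut-off integral as the covolume $\vol(Z_G(\gamma)/Z_\Gamma(\gamma))$ — is precisely the computation the paper carries out. Two small points of bookkeeping, however, deserve correction. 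First, the integral formula you write for $\ind_{(\gamma)}\Dirac^{\cE}$ over $M$ with the cut-off $c^{(\gamma)}$ is a consequence of Corollary~\ref{cor:MckeanSinger} together with the alternative expression~(\ref{eq:(g)TraceKernel}) of Proposition~\ref{prop:FormulaFor(g)Trace}, \emph{not} of Theorem~\ref{thm:LocalizedIndexKawasakiIndex}, which is the cohomological fixed-point formula localized to $M^\gamma$; citing the wrong result here obscures that you are still working with the heat-kernel trace before taking $t\to 0^+$. Second, the remark "$M^{\gamma}=M$" is false: for $\gamma\neq e$ in a symmetric space $G/H$, $M^\gamma=\{gH: g^{-1}\gamma g\in H\}$ is a proper (possibly empty) submanifold. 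This claim is not actually used in your computation — your integral runs over all of $M$ and the cut-off $c^{(\gamma)}$ is the one on $M$, not the one on the fixed-point set from~(\ref{eq: cut-offSubmfd}) — but it should be removed to avoid conflating the two meanings of $c^{(\gamma)}$. With these references fixed, the plan matches the paper.
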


\begin{proof}
By the geometric side~(\ref{eq:OrbitalIntegal}) of the Selberg trace formula and the definition of the localized indices ({\em cf.} Theorem~\ref{thm:MainTheorem}), we shall only need to show that the orbital integral  
\begin{equation}
\label{eq:OGamma}
\cO_{\gamma}=\vol(Z_G(\gamma)/Z_{\Gamma}(\gamma))\int_{Z_G(\gamma)\backslash G}\Tr_s [k_t(x^{-1}\gamma x)\gamma]dx
\end{equation}
where $\Tr_s$ is the supertrace of $\End E$, coincides with 
\begin{equation*}
\tr_s^{(\gamma)_{\Gamma}}e^{-t(\Dirac^{\cE})^2}:=\sum_{h\in(\gamma)}\int_Gc(x)\Tr_s[K_t(x, hx)h]d x,
\end{equation*}
where $c$ is a cut-off function on $G$ with respect to the $\Gamma$-action.

Let us rewrite $\tr_s^{(\gamma)_{\Gamma}}e^{-t(\Dirac^{\cE})^2}$ by 
\begin{equation}\
\label{eq:STFtrace}
\begin{split}
\tr_s^{(\gamma)_{\Gamma}}e^{-t(\Dirac^{\cE})^2}=&\sum_{k\in K}\int_Gc(x)\Tr_s[k_t(x^{-1}k\gamma k^{-1} x)k^{-1}\gamma k]dx\\
=&\int_G c^{(\gamma)}(x)\Tr_s[k_t(x^{-1}\gamma x)\gamma]dx,
\end{split}
\end{equation}
where $K$ is the set generating the conjugacy class $(\gamma)_{\Gamma}$ defined in~(\ref{ProofTrace}) and 
\begin{equation*}
c^{(\gamma)}(x):=\sum_{k\in K}c(kx).
\end{equation*}
Identify the space of the right cosets $Z_G(\gamma)\backslash G$ of $Z_G(\gamma)$ in $G$ as a subset of $G$ consisting of representatives of the right cosets. Then any $x\in G$ can be decomposed uniquely into $x=ba$ where $b\in Z_G(\gamma)$ and $a\in Z_G(\gamma)\backslash G.$ 
Notice that $k_t(a^{-1}b^{-1}\gamma ba)\gamma=k_t(a^{-1}\gamma a)\gamma$ for all $b\in Z_G(\gamma)$.
Thus,~(\ref{eq:STFtrace}) is equal to
\begin{equation}
\label{eq:STFProof}
\tr_s^{(\gamma)_{\Gamma}}e^{-t(\Dirac^{\cE})^2}=\int_{Z_G(\gamma)\backslash G}\Tr_s[k_t(a^{-1}\gamma a)\gamma]\left[\int_{Z_G(\gamma)} c^{(\gamma)}(ba)db\right]da.
\end{equation}
Lemma~\ref{lem:K^GZ_G=G} implies that $K \cdot Z_{\Gamma}(\gamma)=\Gamma$. 
Then for any $l\in G$, we have 
\begin{equation*}
\sum_{b\in Z_{\Gamma}(\gamma)}c^{(\gamma)}(bla)=\sum_{k\in K, b\in Z_{\Gamma}(\gamma)}c(kbla)=\sum_{h\in \Gamma}c(hla)=1.
\end{equation*}
The term $\int_{Z_G(\gamma)} c^{(\gamma)}(ba)db$ in~(\ref{eq:STFProof}) can be calculated similarly as the argument we used to derive~(\ref{eq:STFProof}):
\begin{equation*}
\int_{Z_G(\gamma)} c^{(\gamma)}(ba)db=\int_{Z_G(\gamma)/Z_{\Gamma}(\gamma)}\left[ \sum_{l\in Z_{\Gamma}(\gamma)}c^{(\gamma)}(hla)dl\right] dh=\vol(Z_G(\gamma)/Z_{\Gamma}(\gamma)).
\end{equation*}
Together with~(\ref{eq:STFProof}) and~(\ref{eq:OGamma}) we see that $\tr_s^{(\gamma)_{\Gamma}}e^{-t(\Dirac^{\cE})^2}=\cO_{\gamma}$.
The theorem is then proved by comparing~(\ref{eq:OrbitalIntegal}) and Theorem~\ref{Thm:OrbifoldIndex=SumLocalizedIndex}.
\end{proof}

\subsection{An application in positive scalar curvature} 
Localized indices produce finer topological invariants for the $G$-orbifold $\f{X}$ and the quotient $G\backslash\f{X}$. They also reveal some geometric information of the orbifold. We shall for example present a result of positive scalar curvature.
As we know, the higher index of an elliptic operator is the obstruction of the invertibility of the operator. However, the higher index is difficult to compute. Therefore, as a weaker condition but easier to compute, the nonvanishing of the localized indices gives rise to an obstruction of invertibility of the operator.
We then formulate the nonvanishing result as follows. 

Let $\f{Y}$ be a spin compact orbifold obtained from the quotient of a complete orbifold $\f{X}$ by a discrete $G$ group acting properly, co-compactly and isometrically. Let $\cS$ be the spinor bundle over $\f{X}$. Denote by $\Dirac^{\cS}$ (resp. $\Dirac^{G\backslash\cS}$) the Dirac operator on $\f{X}$ (resp. $\f{Y}$).
\begin{theorem}
\label{thm:PositiveScalarCurvature}
If any of the localized indices of the $G$-invariant Dirac operator $\Dirac^{\cS}$ on $\f{X}$ is nonzero, then the quotient orbifold $\f{Y}$ cannot have positive scalar curvature.
\end{theorem}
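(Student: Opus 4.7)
The plan is to argue by contradiction using the Lichnerowicz formula and a spectral-gap argument. Assume $\f{Y}=G\backslash\f{X}$ carries positive scalar curvature; since $\f{Y}$ is compact there exists $\kappa>0$ with $r_{\f{Y}}\ge\kappa$, and $G$-invariance of scalar curvature lifts this to $r_{\f{X}}\ge\kappa$ uniformly on $\f{X}$. Because $\cS$ is the pure spinor bundle, the twisting curvature $F^{\cS/\cS}$ in the Lichnerowicz formula~(\ref{D^2}) vanishes, so the identity reduces to
\[
(\Dirac^{\cS})^2=\Delta^{\cS}+\tfrac14 r_{\f{X}}\ \ge\ \tfrac{\kappa}{4}\,\op{Id}\qquad\text{on }L^2(\f{X},\cS).
\]
In particular $\Dirac^{\cS}$ is invertible on $L^2(\f{X},\cS)$ and the bounded transform $F=\Dirac^{\cS}(1+(\Dirac^{\cS})^2)^{-1/2}$ satisfies $F^2\ge\tfrac{\kappa}{4+\kappa}\op{Id}$, while functional calculus yields $\|e^{-t(\Dirac^{\cS})^2}\|_{\op{op}}\le e^{-t\kappa/4}$ for every $t>0$.

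I would then use this spectral gap to trivialize the equivariant $K$-homology class $[\Dirac^{\cS}]\in K^0_G(C^*_{red}(\f{X}))$ of Lemma~\ref{le:KHomologyElement}. Since $\sigma(F^2)$ is bounded away from $0$, holomorphic functional calculus represents $|F|^{-1}$ as a norm limit of polynomials in $F^2$; combined with the identity $|F|-\op{Id}=(F^2-\op{Id})(|F|+\op{Id})^{-1}$ and the boundedness of $(|F|+\op{Id})^{-1}$, this propagates the Kasparov compactness conditions from $F$ to the selfadjoint symmetry $\op{sign}(F)=F|F|^{-1}$, which additionally satisfies $\op{sign}(F)^2=\op{Id}$. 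The linear homotopy $F_s=sF+(1-s)\op{sign}(F)$ then produces an operator homotopy of Kasparov cycles from $[F]$ to the trivial class, so $[\Dirac^{\cS}]=0$ in $K^0_G(C^*_{red}(\f{X}))$. Pushing this through the Banach-algebra assembly map and pairing with $\tau^{(g)}$ yields
\[
\ind_{(g)}(\Dirac^{\cS})=\tau^{(g)}_*\bigl(\mu_{\cS(G)}[\Dirac^{\cS}]\bigr)=0
\]
for every conjugacy class $(g)\subset G$, contradicting the hypothesis.

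The hardest step will be verifying the Kasparov compactness conditions rigorously along the homotopy in the equivariant (and Banach-) $KK$ setting: one must check that $\phi(a)(F_s^2-\op{Id})$, $[\phi(a),F_s]$ and $[\pi(g),F_s]$ remain compact for all $s\in[0,1]$, all $a\in C^*_{red}(\f{X})$ and all $g\in G$, which reduces through the above identities to propagating compactness from $F$ through its holomorphic functional calculus. An equivalent analytical route, closer to the spirit of the paper, would begin from the McKean--Singer identity $\ind_{(g)}(\Dirac^{\cS})=\tr_s^{(g)}(e^{-t(\Dirac^{\cS})^2})$ of Corollary~\ref{cor:MckeanSinger} and let $t\to\infty$; the obstacle there is combining the temporal operator-norm decay $e^{-t\kappa/4}$ with the short-time Gaussian off-diagonal estimate of Theorem~\ref{thm:HeatKernelAsympMain} (extended to large times by standard spectral-gap arguments) to produce a hybrid bound $\|K_t(hx,x)\|\lesssim e^{-t\kappa/8}\,e^{-d(hx,x)^2/(C't)}$ that makes the dominating envelope of Corollary~\ref{cor:UniformUpperBdSumK(x,gx)} effective for dominated convergence across the (possibly infinite) conjugacy class $(g)$.
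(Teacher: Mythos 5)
The Lichnerowicz/spectral-gap step matches the paper, but the central claim is incorrect: the operator homotopy $F_s=sF+(1-s)\op{sign}(F)$ only proves $[F]=[\op{sign}(F)]$ in $K^0_G(C^*_{red}(\f{X}))$; it does not prove this class is zero. The cycle $(L^2(\f{X},\cS),\phi,\op{sign}(F))$ is \emph{not} degenerate, because degeneracy requires $[\phi(a),\op{sign}(F)]=0$ and $[\pi(g),\op{sign}(F)]=0$ exactly, not merely compactly, and these commutators are generically nonzero. The $K$-homology class of a Dirac operator is the Poincar\'e dual of the fundamental class, a topological invariant independent of the metric, and it cannot vanish simply because the operator has a spectral gap. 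Already in the simplest case the Dirac operator on the round $S^2$ is invertible by Lichnerowicz yet pairs nontrivially with a degree-one line bundle, so $[\Dirac]\neq 0$ in $K_0(C(S^2))$; what vanishes is its Fredholm index. What vanishes in the present setting, and what the paper actually asserts, is the \emph{higher index} $\mu_{\cS(G)}[\Dirac^{\cS}]\in K_0(\cS(G))$, i.e.\ the image of $[\Dirac^{\cS}]$ under the assembly map, not $[\Dirac^{\cS}]$ itself.

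The paper's own proof is terse on precisely this point; it states that invertibility of $\Dirac^{\cS}$ forces $\mu_{\cS(G)}[\Dirac^{\cS}]=0$ without further elaboration. To fill in that step within the paper's framework, one should note that the spectral gap, together with finite propagation speed, lets one construct a properly supported $G$-invariant parametrix $Q$ for which the remainders $S_0,S_1$ in~(\ref{eq:Elliptic1-QD=smooth}) have arbitrarily small $(g)$-trace, so that Proposition~\ref{prop:LocalizedIndex=SupertraceSmoothing} forces $\ind_{(g)}(\Dirac^{\cS})=0$; equivalently, the compressed operator $q\tilde F\iota$ on $\cA_{\cS(G)}$ becomes invertible modulo the relevant ideal, so its $K_1$ class is killed by the boundary map. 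Your alternative McKean--Singer route via Corollary~\ref{cor:MckeanSinger} and $t\to\infty$ is in the right spirit and could be made rigorous, but, as you yourself note, it requires a hybrid large-time heat-kernel estimate that is not established in the paper, whereas the paper's argument bypasses all such analysis at the level of $K$-theory.
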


\begin{proof}
If the quotient orbifold $\f{Y}=G\backslash \f{X}$ has a metric leading to a positive scalar curvature, then as $G$ acts by isometry, the covering orbifold $\f{X}$ also has a positive scalar curvature, denoted by $r_{\f{X}}$. Then by the Lichnerowicz formula~(\ref{D^2}), we have 
\begin{equation*}
(\Dirac^{\cS})^2=\Delta^{\cS}+\frac{1}{4}r_{\f{X}},
\end{equation*}
 which is a strictly positive operator. 
Hence $\Dirac^{\cS}$ is invertible. Therefore the higher index $\mu_{\cS(G)}[\Dirac^{\cE}]$ vanishes. 
As the localized indices factor through the Banach algebra version of the higher index map, all the localized indices are $0$, which contradicts with assumption. 
The theorem is then proved.
\end{proof}

\begin{remark}
If the orbifold index of the Dirac operator $\Dirac^{G\backslash\cS}$ on the quotient is $0$, it does not prove the nonexistance of positive scalar curvature of $\f{Y}.$
But if the localized indices, which sum up to be $0$ by Theorem~\ref{Thm:OrbifoldIndex=SumLocalizedIndex}, are not all $0$, then Theorem~\ref{thm:PositiveScalarCurvature} shows that $\f{Y}$ cannot admit a positive scalar curvature. 

On the other hand, while the localized indices (for $g\neq e$) would vanish for spaces with positive scalar curvature, we expect the localized indices to be useful in the study of non-positive curved space, for example, Riemannian symmetric manifolds of noncompact type, and their co-compact orbifold quotient.
\end{remark}


\begin{thebibliography}{CFKS}

\bibitem[ALR]{ALR}  A.  Adem, J.  Leida  and  Y. Ruan,   \emph{ Orbifolds and stringy topology,}  Cambridge Tracts in Mathematics {\bf 171}, Cambridge University Press, 2007.

\bibitem[Ar]{Ar:Clay05}
J. Arthur, \emph{An introduction to the trace formula},  {Harmonic analysis, the trace formula, and {S}himura varieties}, {Clay Math. Proc.}, {\bf 4}, {1--263},  {Amer. Math. Soc.}, {Providence, RI}, (2005).

\bibitem[At]{Atiyah:1976} M. F. Atiyah, \emph{Elliptic operators, discrete groups and von Neumann algebras,} Asterisque {\bf 32-33}, 43--72 (1976).

\bibitem[AB]{AB}  M.F. Atiyah and R. Bott, \emph{ A Lefschetz fixed point formula for elliptic
complexes II,}  Applications. Ann. of Math. {\bf 88}, 451--491(1968).

\bibitem[ABP]{ABP:Invent73} M. Atiyah, R. Bott and V. K. Patodi, \emph{On the heat equation and the index theorem,} Invent. Math. {\bf 19}, 279--330 (1973). 

\bibitem[BM]{BM83} D. Barbasch and H. Moscovici, \emph{$L^2$-index and the Selberg trace formula.} Journal of functional analysis {\bf 53}, 151--201 (1983).

\bibitem[BC1]{BC:88} P. Baum and A. Connes, \emph{Chern character for discrete groups.} A f\'ete of topology. Academic Press, Boston, MA , 163--232 (1988).

\bibitem[BC2]{BC}  P. Baum  and  A. Connes, \emph{ K-theory for discrete groups.}  In Operator Algebras and Applications,  Cambridge University Press, Cambridge, 1--20 (1989).

\bibitem[BCH]{BCH}  P. Baum, A. Connes   and N. Higson,  \emph{ Classifying space for proper actions and $K$-theory of group $C^*$-algebras. }  Contemp. Math.  Vol. {\bf 167}, 240--291 (1994).

\bibitem[BGV]{BGV} N. Berline, E. Getzler and M. Vergne, \emph{ Heat kernels and Dirac operators}, Spinger. 

\bibitem[Bis]{Bis}  J.-M. Bismut, \emph{ Equivariant immersions and Quillen metrics, }  J. Diff. Geom. {\bf 41}, 53--159 (1995).

\bibitem[BiCr]{BiCr} R. Bishop and R. Crittenden, \emph{Geometry of manifolds}, Academic Press, N.Y., 1971.

\bibitem[Bl]{Ba:KTOA} B. Blackadar,
\emph{{$K$}-theory for operator algebras,}
Mathematical Sciences Research Institute Publications Vol 5, 2nd edition, Cambridge University Press, 1998.

\bibitem[Bo]{Bo} J. Borzellino, \emph{Orbifold of maximum diameter}, Indiana Univ. Math. J. {\bf 42}, 37--53 (1993).

\bibitem[Bu]{Bunke:2007} U. Bunke, \emph{Orbifold index and equivariant $K$-homology.} Math. Ann. {\bf 339} 175--194 (2007).

\bibitem[CGT]{CGT} J. Cheeger, M. Gromov and M. Taylor, \emph{ Finite propagation speed, kernel estimates for functions of the Laplace operator, and the geometry of complete Riemannian manifolds,}  J. Differential Geom. {\bf 17}, 15--53 (1982).

\bibitem[Co1]{Connes:Kyoto83} {A. Connes}, \emph{Cyclic cohomology and the transverse fundamental class of a foliation}, {Geometric methods in operator algebras ({K}yoto, 1983)}, {Pitman Res. Notes Math. Ser.}, {\bf 123}, {52--144} (1986)

\bibitem[Co2]{Connes:1994}  A. Connes,  \emph{ Noncommutative geometry.}   Academic Press Inc., San Diego, CA, 1994.
 
\bibitem[CM]{CM82} A. Connes and H. Moscovici, \emph{$L^2$-index theorem for homogeneous spaces of Lie groups.} Ann. Math. Vol. {\bf 115}, No. 2, 291--330 (1982).


\bibitem[CaW]{CarW:2011} P. Carrillo and B. Wang, \emph{ Twisted longitudinal index theorem for foliations and wrong way functoriality}. Adv.  in Math., Vol. {\bf 226}, No. 6, 4933--4986, (2011).

\bibitem[Do]{Do}  H. Donnelly, \emph{Asymptotic expansions for the compact quotients of properly discontinuous
group actions,}   Illinois J. Math. {\bf 23}, 485--496 (1979).


\bibitem[Dui]{Dui} J. J. Duistermaat, \emph{ The heat kernel Lefschetz fixed point formula for the
spin$^c$ Dirac operator},   Progress in nonlinear differential equations and
their applications, v. 18, 1996.

\bibitem[EE]{EE} S. Echterhoff and H. Emerson, \emph{Structure and {$K$}-theory of crossed products by proper actions}, {Expo. Math.}, {\bf 29} {No. 3}, 300--344 (2011).

\bibitem[EEK]{EEK} S. Echterhoff, H. Emerson and H.J. Kim, \emph{A Lefschetz fixed-point formula for certain orbifold $C^*$-algebras. } J. Noncommut. Geom. {\bf 4}, no. 1, 125--155 (2010).

\bibitem[Em]{Em} H. Emerson, \emph{Lefschetz numbers for $C^*$-algebras.}
  Canad. Math. Bull. {\bf 54}, no. 1, 82--99 (2011).
  
\bibitem[F1]{Farsi:1992-1} C. Farsi, \emph{$K$-theoretical index theorems for good orbifolds.} Proceedings of the American Mathematical Society, Vol. {\bf 115}, No. 3 769--773 (1992). 

\bibitem[F2]{Farsi:1992-2} C. Farsi, \emph{$K$-theoretic index theorems for orbifolds.} Quart. J. Math. Oxford (2), {\bf 43}, 183--200 (1992).


\bibitem[FT]{FX} G. Felder and X. Tang, \emph{Equivariant Lefschetz number of differential operators,}  Math. Z. {\bf 266}, no.2, 451-470 (2010). 


   \bibitem[HS]{HS:1987} M. Hilsum and G. Skandalis, \emph{ Morphismes {$K$}-orient\'es d'espaces de feuilles et  fonctorialit\'e en th\'eorie de {K}asparov (d'apr\`es une
 conjecture d'{A}. {C}onnes)},  Ann. Sci. \'Ecole Norm. Sup. (4),  {\bf 20}, no. 3, 325--390 (1987).
    

\bibitem[HW]{HuWa} J. Hu and B. Wang, {\em  Delocalized Chern character for stringy orbifold K-theory.} 
    Trans.  of AMS., (2012). 

\bibitem[K]{Kawasaki:1981} T. Kawasaki, \emph{The index of elliptic operators over $V$-manifold.} Nagoya Math. J. Vol. {\bf 84}, 135--157 (1981).

\bibitem[K1]{Kasparov:1983} G. Kasparov, \emph{The index of invariant elliptic operators, K-theory, and Lie group representations.} English translation: Soviet Mathematics-Doklady. {\bf 27}, 105--109, (1983).

\bibitem[K2]{Kasparov:1988dw}
G.~Kasparov,
\emph{Equivariant ${KK}$-theory and the {N}ovikov conjecture.}  Inventiones Mathematicae, {\bf 91} (1), 147--201, (1988).

\bibitem[K3]{Kasparov:2008}
G.~Kasparov,
\newblock K-theoretic index theorems for elliptic $K$-theoretic index theorems for elliptic and transversally elliptic operators.
\newblock {\em Preprint}, 2012.

\bibitem[KL]{KL} B. Kleiner and  J. Lott, \emph{ Geometrization of three-dimensional orbifolds via Ricci flow,} arXiv:1101.3733v2.

\bibitem[LYZ]{LYZ} J. D. Lafferty, Y. L. Yu and W. P. Zhang, \emph{A direct geometric proof of the Lefschetz
fixed point formulas.}  Trans. Amer. Math. Soc. {\bf 329}, 571-583 (1992).

\bibitem[La]{LafforgueThesis} V. Lafforgue, \emph{$KK$-th\'eorie bivariante pour les alg\`ebres de Banach et conjecture de Baum-Connes}, Th\`ese de Doctorat de l'universit\'e Paris 11 (mars 1999). 

     \bibitem[LM]{LM} B. Lawson and M-L. Michelsohn,  \emph{Spin geometry}. Princeton Mathematical Series, {\bf 38}. Princeton University Press,   1989.
  

     \bibitem[LU]{LU} E. Lupercio and  B. Uribe, \emph{Gerbes over orbifolds and twisted K-theory,} 
Comm. Math. Phys.  {\bf 245}, no. 3,  449--489 (2004).
     
     
     \bibitem[Ma]{Ma}  X.  Ma, \emph{ Orbifolds and analytic torsions.}  Trans. Amer. Math. Soc. {\bf 357}, no. 6, 2205--2233 (2005).
    
       \bibitem[MaMa]{MaMa}   X.  Ma and G. Marinescu,
\emph{Holomorphic Morse inequalities and Bergman kernels,}
Progress in Mathematics {\bf 254}, 
Birkhäuser Boston, Inc., Boston, MA. 2007, 422 pp.
     
     \bibitem[MM]{MarMat}  M. Marcolli and V. Mathai,  \emph{Twisted index theory on good orbifolds. I. Noncommutative Bloch theory. }
Commun. Contemp. Math. {\bf 1}, no. 4, 553--587 (1999).
 
\bibitem[MZ]{Mathai-Zhang} {V. Mathai and W. Zhang},
     \emph{Geometric quantization for proper actions (with an appendix by Ulrich Bunke).} {Adv. Math.} {\bf 225}, {3}, {1224--1247} (2010),
          
     \bibitem[MP1]{MoePr:1997}   I. Moerdijk and D. A. Pronk, \emph{Orbifolds, sheaves and groupoids}, $K$-Theory {\bf 12}, 3--21 (1997).
     
   \bibitem[MP2]{MoePr} I. Moerdijk and D. A. Pronk, \emph{Simplcial cohomolgy of orbifolds},    Indag. Math. (N.S.)  {\bf 10},  no. 2, 269--293 (1999).

\bibitem[Po1]{Po:CMP} R. Ponge, \emph{A new short proof of the local index formula and some of its applications}. 
Comm.\ Math.\ Phys.\ \textbf{241}, 215--234 (2003). 

\bibitem[PW]{PW:NCGCGI.PartII} R. Ponge and H. Wang, \emph{Noncommutative geometry and conformal geometry. II. The local 
equivariant index theorem}. E-print, arXiv, June 2013. 

\bibitem[Ren]{Ren:1980} J.  Renault, \emph{A groupoid approach to $C^*$-algebras},  Lecture Notes in Mathematics.  Vol. {\bf 793}, Springer, Berlin, 1980.

\bibitem[S]{Selberg:56} A. Selberg, \emph{Harmonic analysis and discontinuous groups in weakly symmetric Riemannian spaces with applications to Dirichlet series}, J. Indian Math Soc. {\bf 20}, 47--87 (1956).

\bibitem[TTW]{TaTsW} X. Tang, H. Tseng and B. Wang, \emph{Orbifold $K$-theory for ineffective orbifolds}, in preparation.

\bibitem[Th]{Thurston:1997} W. Thurston,  \emph{The Geometry and Topology of 3-Manifolds}, Princeton Mathematical Series, {\bf 35}, Princeton University Press, 1997.


\bibitem[W]{W:2012} H. Wang, \emph{$L^2$-index formula for properly cocompact group actions.} To appear in ``Journal of Noncommutative Geometry" {arXiv:1106.4542}. 


\bibitem[Yu]{Yu} G. Yu, \emph{ Higher index theory of elliptic operators and geometry
of groups},  Proceed. of the ICM, Madrid, Spain, 2006, 1623--1639. 
\end{thebibliography}
\end{document}